\newtheorem*{rep@theorem}{\rep@title}
\newcommand{\newreptheorem}[2]{%
\newenvironment{rep#1}[1]{%
 \def\rep@title{#2 \ref{##1}}%
 \begin{rep@theorem}}%
 {\end{rep@theorem}}}
\newtheorem{theorem}{Theorem}[section]
\newtheorem*{theorem*}{Theorem}
\newtheorem{corollary}[theorem]{Corollary}
\newtheorem{conjecture}[theorem]{Conjecture}
\newtheorem{lemma}[theorem]{Lemma}
\theoremstyle{definition}
\newtheorem{definition}[theorem]{Definition}
\newtheorem{example}[theorem]{Example}
\theoremstyle{remark}
\newtheorem{remark}[theorem]{Remark}
\def \R {\mathbb{R}}
\def \E {\mathbb{E}}
\def \P {\mathbb{P}}
\def \Z {\mathbb{Z}}
\def \g {\mathfrak{g}}
\def \comp {\circ}
\def \sbs {\subseteq}
\def \cross {\times}
\def \eps {\epsilon}
\def \ones {\mathsf{1}}
\def \ad {\text{ad}}
\numberwithin{equation}{section}
\begin{document}

\title{Estimates on the Markov Convexity of Carnot Groups and Quantitative Nonembeddability}


\author{Chris Gartland}
\thanks{Thanks to Jeremy Tyson for helpful comments in the preparation of this article and to Assaf Naor for suggesting the (non)embeddability corollaries of the main theorems.}






\begin{abstract}
We show that every graded nilpotent Lie group $G$ of step $r$, equipped with a left invariant metric homogeneous with respect to the dilations induced by the grading, (this includes all Carnot groups with Carnot-Caratheodory metric) is Markov $p$-convex for all $p \in [2r,\infty)$. We also show that this is sharp whenever $G$ is a Carnot group with $r \leq 3$, a free Carnot group, or a jet space group; such groups are not Markov $p$-convex for any $p \in (0,2r)$. This continues a line of research started by Li who proved this sharp result when $G$ is the Heisenberg group. As corollaries, we obtain new estimates on the non-biLipschitz embeddability of some finitely generated nilpotent groups into nilpotent Lie groups of lower step. Sharp estimates of this type are known when the domain is the Heisenberg group and the target is a uniformly convex Banach space or $L^1$, but not when the target is a nonabelian nilpotent group.
\end{abstract}

\maketitle


\tableofcontents

\section{Introduction}
\subsection{Background}
In \cite{Ribe}, Ribe showed that if two Banach spaces $E,F$ are uniformly homeomorphic, then they are mutually finitely representable; there exists a $\lambda < \infty$ such that for any finitely dimensional subspace $E_1$ of $E$, there is a subspace $F_1$ of $F$ whose Banach-Mazur distance from $E_1$ is at most $\lambda$. Properties of Banach spaces that are preserved under mutual finite representability are called \emph{local}, and many classical properties such as type, cotype, superreflexivity, and $p$-convexity are local. Recall that a Banach space is said to be \emph{$p$-convex} for some $p \geq 2$ if there exists an equivalent norm $\|\cdot\|$ and $K < \infty$ such that for every $\eps \in [0,2]$,
$$\sup\{\|(x+y)/2\|: \|x\|,\|y\| \leq 1, \|x-y\| \geq \eps\} \leq 1-\eps^p/K$$
Ribe's theorem implies that these properties are really metric properties, suggesting that each should have a reformulation that involves only the metric structure of the Banach space and not the linear structure. The research program concerned with finding these reformulations is known as the \emph{Ribe program}. The program was initiated by Bourgain in \cite{Bourgain} in which he made the first substantial contribution by characterizing superreflexive Banach spaces as those which do not admit biLipschitz embeddings of the binary trees of depth $k$ with uniform control on the biLipschitz distortion. We record here that the \emph{biLipschitz distortion} (or just distortion) of a map $f: X \to Y$ between metric spaces $(X,d_X)$, $(Y,d_Y)$ is the least value of $L$ for which there exists $0<D<\infty$ so that
$$d_X(x,y) \leq Dd_Y(f(x),f(y)) \leq Ld_X(x,y)$$
for all $x,y \in X$, that $f$ is a \emph{biLipschitz embedding} if its distortion is finite, and that $f$ is a \emph{biLipschitz equivalence} if it is a biLipschitz embedding and surjective. The \emph{biLipschitz distortion of $X$ into $Y$} is the infimal distortion of all maps from $X$ into $Y$. Another major contribution to the Ribe program is a purely metric reformulation of $p$-convexity. The metric property \emph{Markov $p$-convexity} was originally defined by Lee-Naor-Peres in \cite{LNP} and proved by Mendel-Naor in \cite{MN} to be a reformulation of $p$-convexity. Here are the specifics:
 
\begin{definition}[Definition 1.2, \cite{MN}] \label{def:Markovdef}
Let $\{X_t\}_{t \in \Z}$ be a Markov chain on a state space $\Omega$. Given an integer $k \geq 0$, we denote by $\{\tilde{X}_t(k)\}_{t \in \Z}$ the process which equals $X_t$ for time $t \leq k$ and evolves independently (with respect to the same transition probabilities) for time $t > k$. Fix $p > 0$. A metric space $(M,d)$ is called \emph{Markov $p$-convex} if there is $\Pi < \infty$ so that for every Markov chain $\{X_t\}_{t \in \Z}$ on a state space $\Omega$, and for every $f: \Omega \to M$,
$$\sum_{k=0}^{\infty} \sum_{t \in \Z} \frac{\E[d(f(X_t),f(\tilde{X}_t(t-2^k)))^p]}{2^{kp}} \leq \Pi^p \sum_{t \in \Z} \E[d(f(X_{t+1}),f(X_t))^p]$$
Set $\Pi_p(M)$ equal to the least value of $\Pi$ so that the above inequality holds (whenever it exists). $\Pi_p(M)$ is called the \emph{Markov $p$-convexity constant} of $M$.
\end{definition}

\begin{theorem}[Theorem 1.3, \cite{MN}] \label{thm:MNmain}
A Banach space is $p$-convex if and only if it is Markov $p$-convex.
\end{theorem}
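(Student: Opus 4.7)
The plan is to prove the two directions separately, leveraging Pisier's classical martingale characterization of $p$-convexity: a Banach space $X$ is $p$-convex if and only if, after passing to an equivalent norm, there is $K < \infty$ such that every $X$-valued martingale $\{M_t\}_{t=0}^n$ satisfies $\sum_{t=1}^n \E\|M_t - M_{t-1}\|^p \leq K^p \E\|M_n\|^p$. Both directions of Theorem \ref{thm:MNmain} should be obtained by translating this martingale inequality into a statement about Markov chains (and vice versa), with the dyadic scales $2^k$ in Definition \ref{def:Markovdef} corresponding to martingale differences over dyadic blocks of time.

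For the direction $p$-convex $\Rightarrow$ Markov $p$-convex: given a Markov chain $\{X_t\}$ and $f: \Omega \to X$, the plan is to bound $\E[\|f(X_t) - f(\tilde{X}_t(t-2^k))\|^p]$ by a dyadic martingale quantity and apply Pisier's inequality. The key coupling observation is that, conditioned on the path of $X$ up to time $t - 2^k$, the random variables $X_t$ and $\tilde{X}_t(t-2^k)$ are conditionally i.i.d., so by Jensen's inequality their distance in $L^p$ is controlled by the oscillation of the martingale $s \mapsto \E[f(X_t) \mid \mathcal{F}_s]$ over $s \in [t - 2^k, t]$. Summing over dyadic scales $k$ and start times $t$ and collapsing telescopically should recover precisely the $p$-variation of the natural martingale associated to $\{f(X_t)\}$, which Pisier's inequality then bounds by $\sum_t \E[\|f(X_t) - f(X_{t-1})\|^p]$, as desired.

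For the converse, Markov $p$-convex $\Rightarrow$ $p$-convex, the plan is the contrapositive via a test construction. If $X$ fails to be $p$-convex, then by Pisier's renorming theorem the martingale inequality fails in every equivalent norm with arbitrarily large constant: for every $K$ there is a dyadic $X$-valued martingale $\{M_t\}_{t=0}^{2^n}$ witnessing its failure. Realize this martingale as $f(X_t) = M_t$ for a random walk $\{X_t\}$ on the binary tree of depth $n$. The right-hand side of Definition \ref{def:Markovdef} becomes the total expected path length $\sum_t \E\|M_t - M_{t-1}\|^p$, while each dyadic-scale block on the left-hand side reduces, via the independence of the ghost process past time $t - 2^k$, to exactly the $p$-variation of $M$ over that block. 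Failure of Pisier's inequality then forces $\Pi_p(X) = \infty$.

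The main obstacle I expect is the precise accounting in the forward direction: one must show that summing over the dyadic scales $k$ produces exactly the martingale $p$-variation without losing factors that depend on $k$ or $n$. This requires carefully exploiting cancellation in the telescoping decomposition and using the Markov property to replace conditional expectations by dyadic averages. The converse direction is largely a matter of engineering the test chain so that the ghost-process distance matches the martingale differences exactly on each scale, which should follow from the tree structure.
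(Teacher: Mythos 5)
This theorem is cited from \cite{MN} (Theorem 1.3 there) and not proved in the paper, but Section 2.1 of the paper explicitly outlines the \cite{MN} proof of the forward implication, and that outline contradicts your plan. \cite{MN} do not use martingale inequalities for the direction $p$-convex $\Rightarrow$ Markov $p$-convex. Their strategy is: (i) renorm so the norm satisfies a two-point parallelogram-type inequality, (ii) deduce a pointwise four-point ``fork'' inequality for the induced metric, and (iii) pass from the fork inequality to the Markov convexity inequality by a deterministic induction over dyadic scales (Proposition 2.1 of \cite{MN}), which the present paper reuses verbatim in the proof of Theorem \ref{thm:upperboundmain}.

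There is a genuine gap in your forward direction. For an arbitrary Markov chain $\{X_t\}$ and an arbitrary map $f$ as in Definition \ref{def:Markovdef}, the process $\{f(X_t)\}$ is not a martingale. The conditional-expectation process $s \mapsto M^{(t)}_s := \E[f(X_t) \mid \mathcal{F}_s]$ is a \emph{different} martingale for each fixed terminal time $t$; its increments $M^{(t)}_s - M^{(t)}_{s-1}$ are not $f(X_s) - f(X_{s-1})$, and Pisier's inequality applied to $M^{(t)}$ bounds $\sum_s \E\|M^{(t)}_s - M^{(t)}_{s-1}\|^p$ by $\E\|f(X_t) - \E f(X_t)\|^p$, not by $\sum_s \E\|f(X_s) - f(X_{s-1})\|^p$. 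There is no single ``natural martingale associated to $\{f(X_t)\}$'' whose $p$-variation is $\sum_t \E\|f(X_{t+1}) - f(X_t)\|^p$, so the telescoping collapse you invoke does not occur, and no amount of careful accounting will make it occur. This is precisely why \cite{MN} route the argument through a pointwise metric inequality rather than a martingale inequality.

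Your converse direction (realize a Pisier-violating dyadic martingale as $f(X_t)$ for $\{X_t\}$ the directed random walk on the dyadic tree, then exploit the conditional independence of $X_t$ and $\tilde{X}_t(t-2^k)$ given $\mathcal{F}_{t-2^k}$) is in the right spirit and is close to how \cite{MN} argue. Be aware, though, that the bookkeeping is not free: outside Hilbert space there is no Pythagorean identity relating $\E\|M_t - M_{t-2^k}\|^p$ to $\sum_{s=t-2^k+1}^{t}\E\|M_s - M_{s-1}\|^p$, so the summation over dyadic scales $k$ with the weights $2^{-kp}$ has to be matched against the martingale $p$-variation by hand, and the renorming that produces the Pisier-violating martingale must be chosen uniformly.
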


Observe the following fact: if there is a map $f: X \to Y$ with biLipschitz distortion $L$, then $\Pi_p(X) \leq L\Pi_p(Y)$. Thus, Markov convexity can be used to answer quantitative questions about metric spaces in the Lipschitz category.

We present two such applications, the first on the impossibility of dimension reduction in trace class operators, $S_1$. From page 2 of \cite{NPS}): A Banach space $(X,\|\cdot\|_X)$ admits \emph{metric dimension reduction} if there exists $\alpha < \infty$ such that every $n$-point subset of $X$ biLipschitz embeds with distortion $\alpha$ into a linear subspace of $X$ with dimension $n^{o(1)}$. This definition is inspired by the famous Johnson-Lindenstrauss Lemma (\cite{JL}) which implies Hilbert space admits metric dimension reduction. In \cite{NPS}, Naor, Pisier, and Schechtman showed that there is an infinite sequence of $n$-point subsets of $S_1$ whose Markov 2-convexity constant is bounded below by a universal constant times $\sqrt{\ln(n)}$, and that the Markov 2-convexity constant of any $d$-dimensional linear subspace of $S_1$ is bounded above by a universal constant times $\sqrt{\ln(n)}$. Together these imply their main result (Theorem 1, \cite{NPS}): $S_1$ does not admit dimension reduction. For more on the Ribe program and dimension reduction, see the surveys \cite{NaorRibe} and \cite{NaorICM}.

Here is a second application of Markov convexity. In the spirit of the Ribe program, Ostrovskii found a purely metric characterization of the Radon-Nikodym property (RNP) of Banach spaces by showing that a Banach space has the RNP if and only if it does not contain a biLipschitz copy of a thick family of geodesics (Corollary 1.5 \cite{OsRNP}). He asked a natural follow-up question: if a geodesic metric space does not biLipschitz embed into any RNP space, must it contain a biLipschitz copy of a thick family of geodesics? The Heisenberg group is a geodesic metric space that does not biLipschitz embed into any RNP space (see Section 1.2 of \cite{LN} or Theorem 6.1 of \cite{CK}), and Ostrovskii showed that in fact it does not contain a biLipschitz copy of a thick family of geodesics, thus negatively answering the question. He accomplished this by proving that any metric space containing a biLipschitz copy of a thick family of geodesics cannot be Markov $p$-convex for any $p > 0$ (Theorem 1.5, \cite{OsMarkov}), and applying either of the following results of Li:

\begin{theorem} \label{thm:Li}
~\\
Proposition 7.2 and Theorem 7.4, \cite{LiCoarse}: Every graded nilpotent Lie group of step $r$ is Markov $2(r!)^2$-convex. \\
Theorem 1.1 and Corollary 1.3, \cite{LiMarkov}: The set of $p$ for which the Heisenberg group is Markov $p$-convex is exactly $[4,\infty)$.
\end{theorem}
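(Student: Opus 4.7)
The statement combines two results: a general upper bound on Markov $p$-convexity for all graded nilpotent Lie groups of step $r$ and a sharp characterization for the Heisenberg group $\mathbb{H}$. I would attack them separately.

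For the upper bound I would induct on the step $r$, writing $G$ as a central extension $1 \to Z \to G \to G/Z \to 1$ where $Z$ is the last nonzero layer of the grading (of homogeneous weight $r$), so $G/Z$ has step $r-1$. The ball-box estimate produces a Pythagorean-type inequality of the shape
$$d_G(x,y)^{2r} \gtrsim d_{G/Z}(\pi x, \pi y)^{2r} + |c(x,y)|^{2},$$
where $c(x,y) \in Z \cong \R$ measures the failure of a fixed set-theoretic section of $\pi$ to be a group homomorphism. Feeding a Markov chain through this splitting, the projected chain in $G/Z$ is controlled by the inductive hypothesis and the vertical correction in $Z$ by Euclidean Markov $2$-convexity; the accumulating constant works out to $2(r!)^2$ because the vertical weight $r$ appears as an exponent in a H\"older balancing step at each stage of the induction. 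For the sharp Markov $4$-convexity of $\mathbb H$ one specialises $r=2$ and uses the explicit Kor\'anyi-type ball-box formula directly, so as to avoid the $(r!)^2$ slack coming from the iterated H\"older losses.

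The main obstacle in the upper bound is that the vertical correction $c(x,y)$ depends bilinearly on the horizontal coordinates through the group law, so one cannot naively decouple vertical and horizontal contributions along a chain. Instead one must telescope the vertical increments, identify the linear-in-increments piece (a genuine martingale in $Z$ that responds to Markov $2$-convexity of $\R$), and control the bilinear correction using the horizontal moments already bounded by induction.

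For the sharpness half I would exhibit a sequence of Markov chains on the integer Heisenberg group $\mathbb H(\Z)$ whose Markov $p$-convexity sums blow up for every $p<4$. A natural route is to find $O(1)$-biLipschitz copies inside $\mathbb H$ of the Laakso-type graphs $L_n$---whose Markov $p$-convexity constants are known to tend to infinity---using horizontal curves whose projections enclose vertical area at geometric scales, and then track rates so that the resulting lower bound on $\Pi_p(\mathbb H)$ diverges precisely when $p<4$. The main obstacle here is the delicate matching of exponents: the biLipschitz distortion of $L_n$ into $\mathbb H$ and the blow-up rate of $\Pi_p(L_n)$ must combine to give a lower bound of order $n^{1/p - 1/4}$, which is the quantitative heart of the threshold $p=4$. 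Alternatively, one could construct directly an ad hoc Markov chain on $\mathbb H(\Z)$ that mimics the Laakso branching through alternating horizontal and area-enclosing segments, with the enclosed areas chosen so that the vertical coordinate inflates the CC-distance by exactly a $2^{k/2}$ factor over a time window of length $2^k$; this is the geometric reason the inequality of Definition \ref{def:Markovdef} collapses exactly at the exponent $4$.
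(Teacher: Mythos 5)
Note first that Theorem~\ref{thm:Li} is not proved in this paper: it is a verbatim recall of two results of Li, cited as Proposition 7.2 and Theorem 7.4 of \cite{LiCoarse} and Theorem 1.1 and Corollary 1.3 of \cite{LiMarkov}. What the paper does prove are Theorems~\ref{thm:upperboundmain} and~\ref{thm:lowerboundmain}, which respectively improve the exponent $2(r!)^2$ to the sharp $2r$ and extend the Heisenberg sharpness to jet space groups (and hence, via Corollary~\ref{cor:main}, recover the Heisenberg case). So the honest answer to ``what is the paper's proof?'' is ``a citation''; what follows compares your sketch to the actual arguments of Li and of this paper.

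For the upper bound, your proposed route --- central extension $1\to Z\to G\to G/Z\to 1$, a Pythagorean ball--box inequality, split the chain into a projected chain plus a vertical correction --- is genuinely different from both Li's and this paper's methods. Both of those construct an explicit homogeneous quasi-norm $N$ on the whole group and prove a parallelogram-type inequality such as Lemma~\ref{lem:convnorm2},
$$\big(N(x)^{2p}+N(y^{-1}x)^{2p}\big)/2 - \big(N(y)/2\big)^{2p} \geq c\, N\big(\delta_{1/2}(y)^{-1}x\big)^{2p},$$
which then feeds Mendel--Naor's $4$-point machinery. Your estimate $d_G^{2r}\gtrsim d_{G/Z}^{2r}+|c|^2$ is a scaling statement, not a convexity statement, and the decoupling you describe glosses over the real obstruction: for $r>2$ the vertical correction is not a single bilinear form in increments but involves iterated brackets of all orders up to $r$ along the entire history of the chain, so ``identify the linear-in-increments piece (a genuine martingale in $Z$)'' is not a step you can take for granted --- it is the whole difficulty. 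Without specifying that martingale and showing the bilinear-and-higher remainder can be absorbed by the inductive hypothesis, the induction does not close. (Your heuristic that $2(r!)^2$ arises from iterated H\"older losses is also not how the constant emerges; the improvement to $2r$ in this paper shows the $(r!)^2$ slack is an artifact of Li's construction, not an intrinsic H\"older penalty.)

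For the sharpness direction your instinct --- Laakso-type diamond graphs, area-enclosing horizontal curves, track the exponent race against $p=4$ --- is the right one and does match the strategy of \cite{LiMarkov} and the generalization in Section~5 here. But the opening sentence asserting ``$O(1)$-biLipschitz copies of the Laakso-type graphs $L_n$'' inside the Heisenberg group is self-defeating: if such copies existed then, since $\Pi_p(L_n)\to\infty$ for \emph{every} $p>0$ (Ostrovskii's thick-family-of-geodesics obstruction), the Heisenberg group would fail Markov $p$-convexity for every $p$, contradicting the upper bound you just argued. The entire content of the lower bound is that the distortion of the embedded copy of $L_m$ must grow with $m$ (roughly as $\sqrt[4]{m}\sqrt{\ln(m+1)}$ in the coLipschitz direction; see the discussion in Section 2.2 and the proof of Theorem~\ref{thm:lowerboundmain}), and that the competition between this growing distortion and the growth of $\Pi_p$ on the undistorted graphs tips precisely at $p=4$. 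Your second and third sentences describe this competition correctly, so the gap is not a missing idea but an internal contradiction: the ``$O(1)$'' claim, taken at face value, would make the estimate both trivial and false.
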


\subsection{Summary of Results} \label{ss:results}

This article continues the line of research started by Theorem \ref{thm:Li}. Our main results are:

\begin{reptheorem}{thm:upperboundmain}
Every graded nilpotent Lie group of step $r$, equipped with a left invariant metric homogeneous with respect to the dilations induced by the grading, is Markov $p$-convex for every $p \in [2r,\infty)$.
\end{reptheorem}

\begin{reptheorem}{thm:lowerboundmain}
For every $p > 0$, $r \geq 1$, coarsely dense set $N \sbs J^{r-1}(\R)$, and $R \geq 3$, let $B_N(R) := \{x \in N: d_{CC}(0,x) \leq R\}$. Then
$$\Pi_p(B_N(R)) \gtrsim \frac{\ln(R)^{\frac{1}{p}-\frac{1}{2r}}}{\ln(\ln(R))^{\frac{1}{p}+\frac{1}{2r}}}$$
where the implicit constant can depend on $r,p$ but not on $N,R$.
\end{reptheorem}

Recall that a subset $N$ of a metric space $(X,d_X)$ is \emph{coarsely dense} if there exists $C < \infty$ such that $X = \cup_{x' \in N} \{x \in X: d_X(x,x') \leq C\}$. See Section \ref{sec:prelim} for the definition of $J^{r-1}(\R)$. Theorem \ref{thm:upperboundmain} is restated and proved at the end of Section \ref{ss:upperbound}, and similarly for Theorem \ref{thm:lowerboundmain} at the end of Section \ref{ss:lowerbound}.

We can extend this result to other groups using the notion of subquotients. Recall that a surjective map $f: X \to Y$ between metric spaces $(X,d_X),(Y,d_Y)$ is a \emph{Lipschitz quotient map} with constant $C < \infty$ if there exists $0<D<\infty$ such that for all $x \in X$ and $R > 0$,
$$B_{R}(f(x)) \sbs f(B_{DR}(x)) \sbs B_{CR}(f(x))$$
If such a map $f$ exists we say $Y$ is a Lipschitz quotient of $X$. $X$ is a \emph{Lipschitz subquotient} of $Y$ with constant $C$ if there is a metric space $Z$ such that $Z$ embeds isometrically into $Y$ and $X$ is a Lipschitz quotient of $Z$ with constant $C$, or, equivalently, there is a a metric space $Z$ such that $Z$ is a Lipschitz quotient of $Y$ with constant $C$ and $X$ isometrically embeds into $Z$. It follows from Proposition 4.1 of \cite{MN} that if $X$ is a Lipschitz subquotient of $Y$ with constant $C$ then $\Pi_p(X) \leq C \Pi_p(Y)$.

Every free Carnot group of step $r \geq 2$ has $J^{r-1}(\R)$ (in fact every graded nilpotent Lie group of step $r$ with 2-dimensional horizontal layer) as a graded quotient group, and the projection map $\R^k \twoheadrightarrow \R$ dualizes to a graded embedding $J^{r-1}(\R) \hookrightarrow J^{r-1}(\R^k)$. See Chapter 14 of \cite{BLU} for background on free Carnot groups and \cite{War} for background on the jet spaces groups $J^{r-1}(\R^k)$.

\begin{corollary} \label{cor:main}
Let $G$ be a Carnot group of step $r$ that has $J^{r-1}(\R)$ as a graded subquotient group, for example $G$ may be a free Carnot group, $J^{r-1}(\R^k)$, or any Carnot group if $r \leq 3$. The set of $p > 0$ for which $G$ is Markov $p$-convex is exactly $[2r,\infty)$.
\end{corollary}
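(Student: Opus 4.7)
The strategy is to establish the two inclusions separately: Theorem \ref{thm:upperboundmain} handles everything from $2r$ upward, and the graded-subquotient hypothesis combined with Theorem \ref{thm:lowerboundmain} rules out everything below $2r$.

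For the upper direction the argument is immediate: $G$ is a Carnot group of step $r$, hence in particular a graded nilpotent Lie group of step $r$ whose Carnot-Carath\'eodory metric is left invariant and $1$-homogeneous with respect to the induced dilations. Theorem \ref{thm:upperboundmain} therefore applies directly and gives $\Pi_p(G)<\infty$ for every $p \in [2r,\infty)$.

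For the lower direction, my first step would be to upgrade the algebraic assumption that $J^{r-1}(\R)$ is a graded subquotient of $G$ to the metric statement that $J^{r-1}(\R)$ is a Lipschitz subquotient of $G$. Letting $H \leq G$ be the relevant graded subgroup and $\pi : H \twoheadrightarrow J^{r-1}(\R)$ the graded quotient homomorphism, I would verify: (i) the inclusion $H \hookrightarrow G$ is biLipschitz, because the intrinsic CC metric on $H$ and the restriction $d_G|_H$ are both left-invariant metrics on $H$ homogeneous with respect to the restricted dilations, and any two such metrics on a graded nilpotent Lie group are biLipschitz equivalent by a standard unit-sphere compactness argument; (ii) $\pi$ is a Lipschitz quotient with constant $1$, because horizontal curves in $J^{r-1}(\R)$ lift to horizontal curves of the same length in $H$. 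Plugging these two facts into the Lipschitz subquotient inequality recalled in Section \ref{ss:results} (Proposition 4.1 of \cite{MN}) yields a constant $C = C(G,r)$ with $\Pi_p(J^{r-1}(\R)) \leq C \cdot \Pi_p(G)$ for all $p > 0$.

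Finally I would apply Theorem \ref{thm:lowerboundmain} with $N = J^{r-1}(\R)$, which is trivially coarsely dense in itself: for any $p \in (0,2r)$ the exponent $\tfrac{1}{p}-\tfrac{1}{2r}$ is strictly positive, so $\Pi_p(B_N(R)) \to \infty$ as $R \to \infty$. Since each $B_N(R)$ embeds isometrically into $J^{r-1}(\R)$, this forces $\Pi_p(J^{r-1}(\R)) = \infty$, and the subquotient inequality from the previous step then forces $\Pi_p(G) = \infty$, i.e.\ $G$ is not Markov $p$-convex. The only nontrivial step is (i)-(ii), the metric realization of the graded subquotient; once that is in place the rest is a direct invocation of the two main theorems.
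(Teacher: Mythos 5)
Your proof follows the same route as the paper: Theorem~\ref{thm:upperboundmain} handles $p \geq 2r$, the graded subquotient hypothesis is upgraded to a Lipschitz subquotient, and Theorem~\ref{thm:lowerboundmain} applied with $N = J^{r-1}(\R)$ (trivially coarsely dense in itself) forces $\Pi_p(J^{r-1}(\R)) = \infty$ for $p < 2r$, which then propagates back to $G$. The paper's actual proof is the one-line ``This follows from Theorems~\ref{thm:upperboundmain} and~\ref{thm:lowerboundmain} and the preceding discussion,'' with the ``preceding discussion'' being precisely the Lipschitz-subquotient paragraph and the statement in Section~\ref{ss:NormsMetrics} that graded embeddings are biLipschitz and graded quotient maps are Lipschitz quotient maps.

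One small caution about your step~(i)--(ii): you write ``the intrinsic CC metric on $H$'' and lift horizontal curves to prove that $\pi$ is a Lipschitz quotient, but the intermediate graded subgroup $H$ in the definition of graded subquotient is only assumed to be a graded nilpotent Lie group; it need not be stratified, so a Carnot--Carath\'eodory metric may not exist on $H$, and ``horizontal curve'' may not even make sense there. The paper avoids this by equipping every graded nilpotent Lie group with a left-invariant homogeneous positive-definite quasi-metric (well-defined up to biLipschitz equivalence) and observing, via $\delta_t$-equivariance plus the usual compactness-of-the-unit-sphere argument, that graded embeddings are biLipschitz and graded quotient homomorphisms are Lipschitz quotient maps in general. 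Your argument is easily repaired by replacing ``CC metric on $H$'' with ``any homogeneous quasi-metric on $H$'' and by appealing to this general fact rather than horizontal-curve lifting; the rest of your reasoning is correct.
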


\begin{proof}
This follows from Theorems \ref{thm:upperboundmain} and \ref{thm:lowerboundmain} and the preceding discussion.
\end{proof}

Recall that a subgroup $\Gamma \leq G$ of a Lie group $G$ is a \emph{lattice} if the subspace topology on $\Gamma$ is discrete and $G/\Gamma$ carries a $G$-invariant, Borel probability measure. 

\begin{corollary}
Let $G$ be a Carnot group of step $r$ that has $J^{r-1}(\R)$ as a graded subquotient group, for example $G$ may be a free Carnot group, $J^{r-1}(\R^k)$, or any Carnot group if $r \leq 3$ (by Lemma \ref{lem:EngelSubQuot}). Let $\Gamma \leq G$ be a lattice equipped with the word metric with respect to a finite generating set (which exists by Theorem 2.21 of \cite{Rag}), and let $B_\Gamma(R)$ denote the ball of radius $R$ in $\Gamma$ centered at the identity. Then for any $p > 0$,
$$\Pi_p(B_\Gamma(R)) \gtrsim \frac{\ln(R)^{\frac{1}{p}-\frac{1}{2r}}}{\ln(\ln(R))^{\frac{1}{p}+\frac{1}{2r}}}$$
\end{corollary}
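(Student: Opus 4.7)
The plan is to reduce the corollary to Theorem \ref{thm:lowerboundmain} by constructing, for each sufficiently large $R$, a Lipschitz subquotient map from $B_\Gamma(R)$ onto a ball $B_N(R')$ in a coarsely dense subset $N \sbs J^{r-1}(\R)$ with $R' \asymp R$ and constants independent of $R$. The conclusion will then follow from Proposition 4.1 of \cite{MN} (the subquotient monotonicity of $\Pi_p$) combined with Theorem \ref{thm:lowerboundmain}.

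First, I would recall the classical coarse-geometric facts about lattices in simply connected nilpotent Lie groups due to Malcev and Pansu: the inclusion $(\Gamma, d_{\text{word}}) \hookrightarrow (G, d_{CC})$ is a biLipschitz embedding (with constants depending on $\Gamma$ and the generating set but not on the scale), and $\Gamma$ is coarsely dense in $(G, d_{CC})$. Consequently $B_\Gamma(R)$ is biLipschitz equivalent to $\Gamma \cap B_G(aR)$ for some $a > 0$, and this discretization is coarsely dense in $B_G(aR)$ with a coarseness constant independent of $R$.

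Next, I would invoke the graded subquotient hypothesis to fix a metric space $Z$, a Lipschitz quotient $q: G \to Z$ with constant $C$, and an isometric embedding $\iota: J^{r-1}(\R) \hookrightarrow Z$. Using the Lipschitz quotient inequalities $B_r(q(g)) \sbs q(B_{Dr}(g)) \sbs B_{Cr}(q(g))$ applied at scale $r = aR$, together with coarse density of $\Gamma$ in $G$, I would extract a coarsely dense subset $N \sbs J^{r-1}(\R)$ such that $B_N(R')$ for some $R' \asymp R$ is a Lipschitz subquotient of $B_\Gamma(R)$ with subquotient constants uniform in $R$. Applying Theorem \ref{thm:lowerboundmain} then yields
\[
\Pi_p(B_\Gamma(R)) \gtrsim \Pi_p(B_N(R')) \gtrsim \frac{\ln(R)^{\frac{1}{p}-\frac{1}{2r}}}{\ln(\ln(R))^{\frac{1}{p}+\frac{1}{2r}}}.
\]

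The hard part will be the bookkeeping in the second step: verifying that the coarsely dense $N$ produced really does have scale-independent coarseness, and that the Lipschitz subquotient relation at the level of balls inherits bounded constants from the global subquotient $J^{r-1}(\R) \hookrightarrow Z \twoheadleftarrow G$. Because $q$ is defined on $G$ rather than directly on $\Gamma$, one must carefully interleave the Lipschitz quotient property of $q$ with the coarse density of $\Gamma$ in $G$ to produce, for each $j \in \iota(N)$, a preimage in $\Gamma \cap B_G(aR)$ with controlled distance. In the concrete examples enumerated in the corollary (free Carnot groups, jet space groups $J^{r-1}(\R^k)$, and step-$\leq 3$ Carnot groups via Lemma \ref{lem:EngelSubQuot}), the subquotient is realized by genuine graded Lie group homomorphisms, which makes this verification essentially routine.
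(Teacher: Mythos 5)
Your high-level strategy is the same as the paper's: reduce via the lattice to a coarsely dense subset of $G$, push through the graded subquotient $J^{r-1}(\R) \leq G' = q(G)$, land on a coarsely dense subset of $J^{r-1}(\R)$, and invoke Theorem \ref{thm:lowerboundmain}. However, you are vague at the one genuinely delicate point, and the gesture you make toward it points in a direction that does not quite work.

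The difficulty is this: $q(\Gamma)$ is coarsely dense in $G'$, but $J^{r-1}(\R)$ is only a \emph{subgroup} of $G'$ (not a quotient), so $q(\Gamma)$ has no reason to lie near, let alone inside, $J^{r-1}(\R)$, and one cannot ``extract'' a coarsely dense subset of $J^{r-1}(\R)$ directly from $q(\Gamma)$. Your proposed remedy --- ``produce, for each $j \in \iota(N)$, a preimage in $\Gamma \cap B_G(aR)$ with controlled distance'' --- amounts to building a Lipschitz section of $q$ over $N$, and Lipschitz quotient maps need not admit Lipschitz sections (the coLipschitz half of the preimage estimate goes the wrong way). The paper sidesteps the need for a section entirely: it takes an \emph{a priori} $B$-separated, coarsely dense $N \subseteq J^{r-1}(\R)$ (exists by Zorn), biLipschitz embeds it into $G'$ via the graded subgroup inclusion, and then postcomposes with a nearest-neighbor map $G' \to q(\Gamma)$. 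The $B$-separation of $N$ (for $B$ large compared to the coarse-density constant of $q(\Gamma)$) is precisely what makes the nearest-neighbor snap biLipschitz. The chain of $\Pi_p$ inequalities (biLipschitz, Lipschitz quotient, biLipschitz) then closes the argument. So: same skeleton, but you should replace the ``preimage/section'' step with the separation--plus--nearest-neighbor trick, as the former is not available in general.
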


\begin{proof}
Let $G,\Gamma,p$ be as above. The inclusion $\Gamma \hookrightarrow G$ is a biLipschitz embedding onto a coarsely dense subset when $\Gamma$ is equipped with the word metric with respect to a finite generating set (this can be proven using Mostow's theorem that lattices in nilpotent Lie groups are cocompact (\cite{Mostow}) and applying the fundamental theorem of geometric group theory). Thus it suffices to prove the conclusion for any coarsely dense $N'' \sbs G$. Let $N''$ be such a subset. By assumption, there is a Carnot group $G'$ and a graded quotient homomorphism $q: G \to G'$ such that  $J^{r-1}(\R)$ is a graded subgroup of $G'$. Then $q$ is a Lipschitz quotient map, so there is a constant $C < \infty$ such that for any $R \geq 3$, 
$$\Pi_p(B_{N''}(R)) \gtrsim \Pi_p(B_{q(N'')}(R/C))$$
Thus it suffices to prove the conclusion for any coarsely dense subset $N' \sbs G'$. Let $N'$ be such a subset. Fix $B >> 1$ and let $N \sbs J^{r-1}(\R)$ be a coarsely dense, $B$-separated subset (each pair of distinct points in $N$ is separated by a distance at least $B$ - such sets always exist by Zorn's Lemma). Then since $J^{r-1}(\R)$ is a graded subgroup of $G'$, there is a biLipschitz embedding $N \to G'$. If $B$ is chosen large enough, we map postcompose with a nearest neighbor map $G' \to N'$ to obtain another biLipschitz embedding $N \to N'$. Then the conclusion follows from Theorem \ref{thm:lowerboundmain}.
\end{proof}

The following quantitative nonembeddability estimate follows from the previous corollary and Theorem \ref{thm:upperboundmain}.

\begin{corollary}
Let $G$ be a Carnot group of step $r$ that has $J^{r-1}(\R)$ as a graded subquotient group, for example $G$ may be a free Carnot group, $J^{r-1}(\R^k)$, or any Carnot group if $r \leq 3$. Let $\Gamma \leq G$ be a lattice equipped with the word metric with respect to a finite generating set, and let $B_\Gamma(R)$ denote the ball of radius $R$ in $\Gamma$ centered at the identity. Let $G'$ be any graded nilpotent Lie group of step $r' < r$. Then we have the following estimate for $c_{G'}(B_\Gamma(R))$, the biLipschitz distortion of $B_{\Gamma}(R)$ in $G'$:
$$c_{G'}(B_\Gamma(R)) \gtrsim \frac{\ln(R)^{\frac{1}{2r'}-\frac{1}{2r}}}{\ln(\ln(R))^{\frac{1}{2r'}+\frac{1}{2r}}}$$
where the implicit constant depends on $G$ and $G'$ but not on $R$.
\end{corollary}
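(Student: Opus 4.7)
The plan is to chain together the previous corollary (lower bound on $\Pi_p(B_\Gamma(R))$) with Theorem \ref{thm:upperboundmain} (which gives a finite Markov $p$-convexity constant for $G'$ once $p \ge 2r'$), using the elementary monotonicity $\Pi_p(X) \le L \cdot \Pi_p(Y)$ whenever $X$ embeds into $Y$ with biLipschitz distortion $L$ (noted just after Theorem \ref{thm:MNmain}).

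First I would fix the exponent $p := 2r'$. By Theorem \ref{thm:upperboundmain}, since $G'$ is a graded nilpotent Lie group of step $r'$, we have $\Pi_{2r'}(G') =: C(G') < \infty$. Next, let $f \colon B_\Gamma(R) \to G'$ be any biLipschitz embedding with distortion $L := c_{G'}(B_\Gamma(R))$. Applying the monotonicity of $\Pi_p$ under biLipschitz embeddings yields
\[
\Pi_{2r'}(B_\Gamma(R)) \;\le\; L \cdot \Pi_{2r'}(G') \;=\; L \cdot C(G').
\]

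On the other hand, the previous corollary (with $p = 2r'$) gives the lower bound
\[
\Pi_{2r'}(B_\Gamma(R)) \;\gtrsim\; \frac{\ln(R)^{\frac{1}{2r'}-\frac{1}{2r}}}{\ln(\ln(R))^{\frac{1}{2r'}+\frac{1}{2r}}},
\]
with implicit constant depending only on $r,r',G$ (through the coarse-density/graded-subquotient data). Combining the two displays and dividing by $C(G')$ gives exactly the claimed bound on $L = c_{G'}(B_\Gamma(R))$, with an implicit constant depending on both $G$ and $G'$ but not on $R$. The condition $r' < r$ is used precisely to guarantee that the exponent $\tfrac{1}{2r'}-\tfrac{1}{2r}$ on $\ln(R)$ is strictly positive, so the lower bound actually diverges with $R$.

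There is essentially no obstacle here — everything is bookkeeping — but the one point worth stating carefully is that the two ingredients must be applied at a compatible value of $p$: we need $p \ge 2r'$ so that the upper bound $\Pi_p(G') < \infty$ holds, and we want $p$ as small as possible so that the lower bound exponent $\tfrac{1}{p}-\tfrac{1}{2r}$ is as large as possible. Choosing $p = 2r'$ optimizes the resulting distortion lower bound within this scheme, and produces the stated exponents on $\ln(R)$ and $\ln(\ln(R))$.
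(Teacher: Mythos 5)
Your proposal is correct and is precisely the argument the paper intends: the paper gives no detailed proof, only the remark that the corollary "follows from the previous corollary and Theorem \ref{thm:upperboundmain}," and your bookkeeping — setting $p=2r'$, using $\Pi_{2r'}(G')<\infty$ from Theorem \ref{thm:upperboundmain}, the monotonicity $\Pi_p(X)\le L\,\Pi_p(Y)$ under a distortion-$L$ embedding, and the lower bound on $\Pi_{2r'}(B_\Gamma(R))$ from the preceding corollary — fills that in exactly as intended.
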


Such quantitative nonembeddability estimates have been the subject of much attention for embeddings of Heisenberg groups into certain Banach spaces, see \cite{ANT} and \cite{LafNaor} for uniformly convex Banach space targets and \cite{NY} for $L^1$ targets. In particular, it can be deduced from \cite{ANT} and \cite{Assouad} that the biLipschitz distortion of the ball of radius $R$ in a lattice in the Heisenberg group into Hilbert space equals, up to universal factors, $\sqrt{\ln(R)}$. Thus, our estimates in the previous corollary cannot be sharp when $r=2$ and $r'=1$. However, these estimates seem to be the first of their type when the target is allowed to be a nilpotent group of step larger than 1. Other quantitative nonembeddability estimates of between Carnot groups were obtained in \cite{LiCoarse}, but they are of a different flavor. Since our estimates are not sharp for $r=2, r'=1$, we speculate that they are not sharp for larger values of $r,r'$ either.

Next, we obtain new results on the nonexistence Lipschitz subquotient maps.

\begin{corollary} \label{cor:finiteCarnot}
Let $G$ be a Carnot group of step $r$ that has $J^{r-1}(\R)$ as a graded subquotient group, for example $G$ may be a free Carnot group, $J^{r-1}(\R^k)$, or any Carnot group if $r \leq 3$. Let $G'$ be any graded nilpotent Lie group of step $r'$.
\begin{enumerate}
\item \label{finiteCarnot1} $G$ is not a Lipschitz subquotient of $L^p$ (or any $p$-convex space) for any $p \in (1,2r)$.
\item \label{finiteCarnot2} If $r > r'$, $G$ is not a Lipschitz subquotient of $G'$.
\end{enumerate}
\end{corollary}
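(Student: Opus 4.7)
The plan is to combine three ingredients already established in the excerpt: the monotonicity $\Pi_p(X) \leq C\,\Pi_p(Y)$ whenever $X$ is a Lipschitz subquotient of $Y$ with constant $C$ (from Proposition 4.1 of \cite{MN}, quoted in the paper); the Mendel--Naor equivalence (Theorem \ref{thm:MNmain}) between $p$-convexity and Markov $p$-convexity for Banach spaces; and Corollary \ref{cor:main}, which says $\Pi_q(G) = \infty$ for every $q \in (0,2r)$. For each part I would derive a contradiction by finding an exponent $q \in (0,2r)$ for which the putative target has $\Pi_q < \infty$.

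For \eqref{finiteCarnot1}, let $p \in (1,2r)$ and set $q := \max(p,2)$, so that $q \in [2,2r)$. The definition of $p$-convexity in the paper requires $p \geq 2$; the endpoint case $p \in (1,2]$ is covered because $L^p$ is $2$-convex in that range. In either case any $p$-convex Banach space $X$ is $q$-convex, hence by Theorem \ref{thm:MNmain} satisfies $\Pi_q(X) < \infty$. If $G$ were a Lipschitz subquotient of such an $X$, monotonicity would give $\Pi_q(G) < \infty$ with $q < 2r$, contradicting Corollary \ref{cor:main}.

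For \eqref{finiteCarnot2}, apply Theorem \ref{thm:upperboundmain} to $G'$ with $p = 2r'$: since $G'$ is graded nilpotent of step $r'$, we get $\Pi_{2r'}(G') < \infty$. If $G$ were a Lipschitz subquotient of $G'$, monotonicity again yields $\Pi_{2r'}(G) < \infty$. But $r' < r$ gives $2r' < 2r$, so Corollary \ref{cor:main} forces $\Pi_{2r'}(G) = \infty$, a contradiction.

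There is really no serious obstacle here; the corollary is a direct consequence of the two main theorems paired with the subquotient-monotonicity of $\Pi_p$. The only subtle point worth recording carefully is the bookkeeping between the Banach-space convexity exponent $p$ and the Markov convexity exponent $q = \max(p,2)$ in the range $p \in (1,2)$, so that the argument for \eqref{finiteCarnot1} actually produces an exponent strictly less than $2r$.
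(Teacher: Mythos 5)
Your proposal is correct and follows the same route as the paper: invoke the monotonicity of $\Pi_q$ under Lipschitz subquotients, the Mendel--Naor equivalence, the $\max(2,p)$-convexity of $L^p$, and the fact (Corollary \ref{cor:main}, or equivalently the preceding quantitative corollaries) that $\Pi_q(G) = \infty$ for $q < 2r$, together with Theorem \ref{thm:upperboundmain} applied to $G'$ for part \eqref{finiteCarnot2}. Your bookkeeping with $q = \max(p,2)$ is exactly the small point the paper's appeal to ``$L^p$ is $\max(2,p)$-convex'' is meant to cover.
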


\begin{proof}
These follow from the previous corollary, the fact that Markov $p$-convexity is preserved under Lipschitz subquotients, Theorem \ref{thm:MNmain}, and the classical fact that $L^p$ is $\max(2,p)$-convex for $p > 1$. 
\end{proof}

Essentially all of the previously know results of this flavor are proved via Pansu differentiation (\cite{Pansu}), which applies when the domain is a (finite dimensional) Carnot group and the target is an RNP Banach space or (finite dimensional) Carnot group (Section 1.2 of \cite{LN} or Theorem 6.1 of \cite{CK}, which are stated for biLipschitz maps on the Heisenberg group, but also apply to biLipschitz or Lipschitz quotient maps on any Carnot group of step at least 2). There is also a recent differentiation theorem of Le Donne-Li-Moisala (\cite{LLM}) which applies when the domain is a ``scalable" group filtrated by (finite dimensional) Carnot groups and the target is an RNP space. However, there does not seem to be a clear way to deduce Corollary \ref{cor:finiteCarnot} in full generality from any of these methods.

We may use Markov convexity again to prove nonexistence of subquotient maps onto some ``infinite step" graded Lie groups. See Section \ref{ss:InfStep} for the definitions of inverse limits, $J^\infty(\R^k)$, and the free Carnot group on $k$ generators, $F_k^\infty$.

\begin{corollary} \label{cor:infinitecarnot}
Let $G_0 \leftarrow G_1 \leftarrow \dots$ be an inverse system of graded nilpotent Lie groups such that for every $r$, there is an $i$ with $J^{r-1}(\R)$ a graded subquotient of $G_i$, and let $G_\infty$ be the inverse limit group. For example, $G_\infty$ may be $J^\infty(\R^k)$ or $F_k^\infty$. Then $G_\infty$ is not a Lipschitz subquotient of any superreflexive space.
\end{corollary}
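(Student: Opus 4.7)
The plan is to argue by contradiction using Pisier's renorming theorem together with Theorems \ref{thm:MNmain}, \ref{thm:upperboundmain}, and \ref{thm:lowerboundmain}. Suppose $G_\infty$ is a Lipschitz subquotient of a superreflexive Banach space $X$. By Pisier's theorem, $X$ admits an equivalent uniformly convex norm, and in particular an equivalent $p$-convex norm for some finite $p \in [2,\infty)$. By Theorem \ref{thm:MNmain}, $X$ is Markov $p$-convex, so $\Pi_p(X) < \infty$. Since Markov $p$-convexity is preserved (up to a constant) under Lipschitz subquotients, this would force $\Pi_p(G_\infty) < \infty$.

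Next I would choose $r$ large enough that $2r > p$ and use the hypothesis to find an index $i$ such that $J^{r-1}(\R)$ is a graded subquotient of $G_i$. The key input from the inverse-limit construction (to be recalled from Section \ref{ss:InfStep}) is that the canonical projection $\pi_i: G_\infty \twoheadrightarrow G_i$ is a graded Lie group homomorphism that intertwines the dilations. With respect to the homogeneous left-invariant metrics, any such surjective graded homomorphism is a Lipschitz quotient map (this is standard: graded homomorphisms are Lipschitz and open in a controlled way, since they factor the dilations). Composing with the graded subquotient structure $J^{r-1}(\R) \leq_{\text{sub-quot}} G_i$ and using transitivity of Lipschitz subquotients (composition of Lipschitz quotient maps is a Lipschitz quotient map, and isometric embeddings obviously compose), we obtain that $J^{r-1}(\R)$ is a Lipschitz subquotient of $G_\infty$.

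Invoking preservation of Markov convexity under Lipschitz subquotients once more, we get $\Pi_p(J^{r-1}(\R)) \lesssim \Pi_p(G_\infty) < \infty$. On the other hand, Theorem \ref{thm:lowerboundmain} applied to any coarsely dense $N \subseteq J^{r-1}(\R)$ shows that $\Pi_p(B_N(R)) \to \infty$ as $R \to \infty$ whenever $p < 2r$, and since $B_N(R)$ isometrically embeds into $J^{r-1}(\R)$ this forces $\Pi_p(J^{r-1}(\R)) = \infty$ for our choice of $p < 2r$. This contradiction completes the proof.

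The only nontrivial step is verifying that the projection $\pi_i : G_\infty \to G_i$ in the inverse limit is genuinely a Lipschitz quotient map; once the definitions in Section \ref{ss:InfStep} are in place this is a direct consequence of the fact that $\pi_i$ is a graded surjective homomorphism between graded nilpotent groups equipped with homogeneous metrics, and the dilation-equivariance lets one turn the algebraic surjectivity into the metric covering property $B_R(\pi_i(x)) \subseteq \pi_i(B_{DR}(x))$. Everything else is a formal combination of known results.
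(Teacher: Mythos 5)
Your proposal is correct and follows essentially the same route as the paper: Pisier's renorming theorem reduces the claim to showing $G_\infty$ is not Markov $p$-convex for any finite $p$, and then one pushes the lower bound for $J^{r-1}(\R)$ up through the Lipschitz subquotient structure. One small imprecision: you justify the fact that $\pi_i : G_\infty \to G_i$ is a Lipschitz quotient map by appealing to dilation-equivariance of graded homomorphisms between graded nilpotent Lie groups, but $G_\infty$ is not a (finite-dimensional) graded nilpotent Lie group, so the usual compactness argument does not directly apply; the paper instead builds this fact into the definition of the inverse limit metric group in Section \ref{ss:InfStep}, where it follows from the $\ell_\infty$-sum construction and the uniform Lipschitz quotient property of the bonding maps $\rho_j$.
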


\begin{proof}
Pisier's renorming theorem, Theorem 11.37 of \cite{Pi}, states that any superreflexive Banach space is $p$-convex for some $p \in [2,\infty)$. Thus it suffices to show that $G_\infty$ is not Markov $p$-convex for any $p \in (0,\infty)$. For every $r \geq 1$, $J^{r-1}(\R)$ is a Lipschitz subquotient of $G_\infty$, so since Markov $p$-convexity is preserved under Lipschitz quotients, the conclusion follows from Corollary \ref{cor:main}.
\end{proof}

Finally, we provide a positive result on the existence of embeddings using one of the main results of \cite{LNP}. A \emph{metric tree} is the vertex set of a weighted graph-theoretical tree equipped with the shortest path metric.

\begin{theorem}[Theorem 4.1, \cite{LNP}] \label{thm:LNPtree}
If $T$ is a metric tree and $T$ is Markov $p$-convex, then $T$ biLipschitz embeds into $L^p$.
\end{theorem}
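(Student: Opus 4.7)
The plan is to construct an explicit biLipschitz embedding $F : T \to L^p$ whose distortion depends only on $\Pi_p(T)$. First, by a standard ultralimit argument, it suffices to treat finite weighted subtrees: if every finite weighted subtree $T' \sbs T$ embeds into $L^p$ with distortion at most $C \Pi_p(T)$ for a universal $C$, then one recovers a biLipschitz embedding of $T$ itself. So fix a finite rooted weighted tree $T$ with root $\rho$, and for each non-root vertex $v$ let $C_v \sbs T$ be the subtree of descendants of $v$ and $w_v$ the weight of the edge from $v$ to its parent. The natural first candidate is $f : T \to \ell_p(T \setminus \{\rho\})$ defined by $f(x) = \sum_{v \ne \rho} w_v \ones_{C_v}(x) \delta_v$. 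The nonzero coordinates of $f(x) - f(y)$ are exactly the vertices on the path $\gamma(x,y)$ other than the least common ancestor, so
\[
\|f(x) - f(y)\|_p^p = \sum_{e \in \gamma(x,y)} w(e)^p,
\]
which gives the easy upper bound $\|f(x) - f(y)\|_p \le d_T(x,y)$.

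The lower bound on this $f$ fails for $p > 1$ whenever $\gamma(x,y)$ consists of many edges of comparable size, precisely because $\ell_p$-aggregation is strictly subadditive; this is the same obstruction that prevents complete binary trees of large depth from embedding uniformly into $L^p$. To repair it I would follow the Bourgain--Matou{\v s}ek template of dyadic-scale orthogonalization: partition the edges of $T$ by the dyadic scale of their distance to $\rho$, let $f_k$ denote the analogue of $f$ supported only on edges of scale $2^k$, and take the orthogonal direct sum
\[
F = \bigoplus_k \lambda_k f_k
\]
in a larger $\ell_p$-space, with a scale-dependent weighting $\lambda_k$ tuned so that the upper Lipschitz bound still holds. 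The point of the orthogonalization is that $\|F(x) - F(y)\|_p^p$ now decouples scale-by-scale, so the candidate lower bound has a chance to recover $d_T(x,y)^p$ by summing cleanly across dyadic levels instead of within a single sum.

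The remaining and hardest step is to bound the distortion of $F$ by a universal constant times $\Pi_p(T)$. For this I would introduce a lazy nearest-neighbor random walk $\{X_t\}_{t \in \Z}$ on $T$ (after subdividing edges to make the walk geometrically uniform) and apply Definition~\ref{def:Markovdef}. The perturbed process $\tilde X_t(t - 2^k)$ splits from $X_t$ at time $t - 2^k$ and, in a tree, walks off into a sibling subtree at scale $\sim 2^k$, so $\E[d(X_t, \tilde X_t(t-2^k))^p]$ equals, up to constants, the scale-$k$ branching energy summed along the trajectory. By construction this is exactly the scale-$k$ contribution to $\|F(x) - F(y)\|_p^p$ averaged over pairs visited by the walk, and Definition~\ref{def:Markovdef} then converts into the desired pointwise lower bound $d_T(x,y) \lesssim \Pi_p(T) \|F(x) - F(y)\|_p$. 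The main obstacle I anticipate is the alignment bookkeeping: the dyadic partition of edges, the weights $\lambda_k$, and the random walk must be tuned so that the expression controlled by Definition~\ref{def:Markovdef} matches exactly the quantity appearing in the distortion estimate, rather than some weaker averaged version; once this matching is set up correctly, the biLipschitz estimate is a direct unpacking of the Markov $p$-convexity inequality.
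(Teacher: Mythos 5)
The paper does not prove Theorem \ref{thm:LNPtree}: it is cited from \cite{LNP} and used as a black box, so there is no internal proof to compare against. Evaluating your outline on its own and against the argument actually given by Lee--Naor--Peres, the early steps are sensible (the indicator map $f(x)=\sum_v w_v\ones_{C_v}(x)\delta_v$ is indeed the standard isometric-into-$\ell_1$ starting point, and its failure for $p>1$ on long paths of comparable edges is the right obstruction to isolate), but there is a genuine gap at the decisive step.

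Your plan is to construct a concrete candidate map $F$ (a dyadic-scale decomposition of $f$) and then verify its distortion directly from the Markov $p$-convexity inequality applied to a random walk. But Definition \ref{def:Markovdef} is an \emph{averaged} inequality over a Markov chain: it controls a sum of $\E[d(X_t,\tilde X_t(t-2^k))^p]$ over all $t$ and $k$, not the distance between an arbitrary prescribed pair $(x,y)$. To prove $F$ is biLipschitz you need a pointwise lower bound $d_T(x,y)\lesssim \Pi_p(T)\,\|F(x)-F(y)\|_p$ for \emph{every} pair, and there is no clean mechanism to extract pointwise control from one averaged inequality --- the worst pair can carry negligible weight under any fixed chain, so ``tuning the walk to match'' does not close the gap. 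This is not bookkeeping; it is exactly the difficulty \cite{LNP} resolve by inserting a deterministic combinatorial intermediary. Their proof first shows that Markov $p$-convexity of a tree forces a uniform ``fork/tripod'' bound (limiting how much the tree can branch at each scale relative to depth) by building a bespoke Markov chain for \emph{each} candidate tripod, so the Markov inequality yields a constraint one tripod at a time; only then do they construct the $L^p$ embedding as a separate argument (in the Matou\v{s}ek tree-embedding spirit) that uses the tripod bound and makes no further reference to Markov chains. Your outline fuses these two steps and thereby discards the mechanism that converts averaged information into pointwise control. The missing lemma you would need is precisely the tripod characterization, and proving it is essentially the content of the theorem.
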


\begin{corollary}
If a metric tree $T$ is a Lipschitz subquotient of a graded nilpotent Lie group $G$ of step $r$, then $T$ biLipschitz embeds into $L^p$ for every $p \geq 2r$.
\end{corollary}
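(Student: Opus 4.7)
The plan is to chain together three facts already established (or recalled) in the excerpt: the upper bound on Markov convexity for graded nilpotent Lie groups, the monotonicity of Markov convexity under Lipschitz subquotients, and the tree-embedding theorem of Lee--Naor--Peres.

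First, I would invoke Theorem \ref{thm:upperboundmain} to conclude that for every $p \geq 2r$, the graded nilpotent Lie group $G$ is Markov $p$-convex, i.e.\ $\Pi_p(G) < \infty$. Second, since the paper has recorded (as a consequence of Proposition 4.1 of \cite{MN}) that for any Lipschitz subquotient $X$ of $Y$ with constant $C$ one has $\Pi_p(X) \leq C\, \Pi_p(Y)$, I would apply this with $X = T$ and $Y = G$ to deduce that $\Pi_p(T) < \infty$. So $T$ is a Markov $p$-convex metric tree.

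Finally, I would invoke Theorem \ref{thm:LNPtree} of Lee--Naor--Peres, which states that any Markov $p$-convex metric tree biLipschitz embeds into $L^p$. Applied to $T$, this immediately yields the desired embedding into $L^p$ for every $p \geq 2r$.

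There is essentially no obstacle here, since each ingredient is quoted; the content of the corollary is simply that the upper bound on Markov convexity of graded nilpotent Lie groups feeds directly into the Lee--Naor--Peres embedding machinery via the subquotient stability of Markov convexity. The only thing to be slightly careful about is that the Lipschitz subquotient hypothesis is applied in the correct direction (subquotients of a Markov $p$-convex space remain Markov $p$-convex), which is exactly how the estimate $\Pi_p(X) \leq C\, \Pi_p(Y)$ is phrased.
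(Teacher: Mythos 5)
Your proposal is correct and is exactly the argument the paper gives: apply Theorem \ref{thm:upperboundmain} to get $\Pi_p(G) < \infty$ for $p \geq 2r$, transfer to $T$ via the subquotient inequality $\Pi_p(T) \leq C\,\Pi_p(G)$, and finish with Theorem \ref{thm:LNPtree}.
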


\begin{proof}
This follows from Theorem \ref{thm:upperboundmain}, the fact that Markov convexity is inherited by Lipschitz subquotients, and Theorem \ref{thm:LNPtree}.
\end{proof}

We conclude this introduction with the obvious conjecture that Theorems \ref{thm:upperboundmain} and \ref{thm:lowerboundmain} lead to, and another somewhat less obvious conjecture.

\begin{conjecture}
Every Carnot group of step $r$ is not Markov $p$-convex for every $p \in (0,2r)$.
\end{conjecture}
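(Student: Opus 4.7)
The conjecture strictly extends Corollary \ref{cor:main}, which handles step-$r$ Carnot groups admitting $J^{r-1}(\R)$ as a graded subquotient; by Lemma \ref{lem:EngelSubQuot} this is automatic for $r \leq 3$, so the conjecture is open only for $r \geq 4$. The natural obstruction is that for $r \geq 4$ there exist step-$r$ Carnot groups whose top layer is spanned only by iterated brackets genuinely involving three or more horizontal generators; a concrete family of examples arises by quotienting the free step-$r$ Lie algebra on $k \geq 3$ generators by the subspace of all two-variable brackets of top degree. For such groups $J^{r-1}(\R)$ may fail to be a graded subquotient, so a proof cannot proceed by reduction to the jet case alone.

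My plan is to adapt the proof technique of Theorem \ref{thm:lowerboundmain} to arbitrary step-$r$ Carnot groups $G$. The key structural property to exploit is the universal step-$r$ scaling of the Carnot-Carath\'eodory metric: for every nonzero element $Z$ in the top layer of $\g$, there exists a closed horizontal ``bracket word''---a product of horizontal one-parameter subgroup elements whose net displacement in $G$ lies in the span of $Z$---whose horizontal word-length $L$ produces a top-layer displacement of size $\sim L^r$ and hence a CC-endpoint-distance of only $\sim L$. The bracket word realizing an Engel-type bracket is the familiar iterated commutator used in the jet-space proof; for a general $r$-fold bracket one takes the corresponding Hall-basis commutator word, which may involve more than two horizontal generators.

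Given such a bracket word, I would construct Markov chains on a recursive branching structure analogous to that used in Theorem \ref{thm:lowerboundmain}, where each branching event traces out a scaled copy of the bracket word with opposite orientations on the two branches, producing a top-layer discrepancy while the per-step horizontal transition distance remains bounded. The desired lower bound on $\Pi_p$ should then follow by essentially the same computation as in the jet-space case, with the $p < 2r$ threshold arising from the step-$r$ scaling between horizontal path length and top-layer displacement.

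The main obstacle will be verifying that the more general bracket words satisfy the precise geometric estimates underlying the proof of Theorem \ref{thm:lowerboundmain}, in particular tight two-sided control on the CC-distances between branches and along each branch. Unlike the jet space, where explicit jet coordinates give clean formulas, a coordinate-free analysis seems necessary, likely via the ball-box theorem and the local structure of CC-geodesics. A secondary obstacle is that when the bracket word involves more than two generators, the associated Markov chain is anisotropic and its increment analysis is correspondingly more involved; passage through a finite-dimensional jet-like subquotient such as $J^{r-1}(\R^k)$, together with the graded embedding $J^{r-1}(\R) \hookrightarrow J^{r-1}(\R^k)$ noted in the text, may serve as a useful intermediate step when available.
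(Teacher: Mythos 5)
The statement you are addressing is explicitly labeled a \emph{conjecture} in the paper: there is no proof of it in the paper, and indeed the authors only establish the result for step-$r$ Carnot groups that admit $J^{r-1}(\R)$ as a graded subquotient (Corollary~\ref{cor:main} together with Lemma~\ref{lem:EngelSubQuot} for $r\le 3$). So there is no ``paper's own proof'' to compare against; what you have written is, correctly, a research plan rather than a proof, and you are candid about the gaps. Let me comment on whether the plan is on solid ground.

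Your diagnosis of where the known argument stops is essentially right: the reduction-to-jet-space route breaks down exactly when $J^{r-1}(\R)$ is not a graded subquotient, and the paper's remark following Lemma~\ref{lem:EngelSubQuot} gives a concrete step-$4$ example where this happens. However, your heuristic explanation of the obstruction --- that the top layer is ``spanned only by iterated brackets genuinely involving three or more horizontal generators'' --- does not match that example. The Lie algebra in the remark has a two-dimensional horizontal layer $\R X_{11}\oplus\R X_{12}$, so every top-layer bracket is a word in only two horizontal generators; nonetheless no graded subquotient isomorphic to $J^3(\R)$ exists. The obstruction is therefore more subtle than ``needing $\geq 3$ generators in a bracket,'' and any argument conditioned on that dichotomy would not cover this example.

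The deeper gap is the one you flag yourself but do not resolve: the proof of Theorem~\ref{thm:lowerboundmain} is carried by very explicit jet-space machinery --- the special function $\phi$ of Lemma~\ref{lem:phidef} with its self-similar $r$-fold-integrated-sawtooth structure, the explicit coordinate projection $\pi_0$ and the lower bound $d_{CC}((x,u),(x,v))\gtrsim|\pi_0(u-v)|^{1/r}$ of Lemma~\ref{lem:dcclowerbound}, and the delicate subgaussian estimate of Lemma~\ref{lem:mapintojetspace}(\ref{mapintojetspace4}) whose cancellation relies on the random walk contributing $\pm\tilde\phi^{(r)}(t)$ to an $r$-th derivative at each branching, with $\sum (\sqrt{n}\ln(n+1))^{-2}<\infty$. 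Replacing these with ``coordinate-free'' bracket-word analogues is the whole problem, and nothing in the proposal indicates how to obtain the two-sided geodesic estimates, the analogue of the $\pi_0$ lower bound, or a subgaussian increment bound for the anisotropic random walk. Until those are supplied, the proposal remains a plausible program rather than a proof, which is consistent with the paper leaving the statement as a conjecture.
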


\begin{conjecture}
For each graded nilpotent Lie group $G$, the set of $p$ for which $G$ is Markov $p$-convex is the same as that of the largest Carnot subgroup of $G$.
\end{conjecture}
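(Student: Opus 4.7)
The containment of the Markov convexity range for $H$ inside that for $G$ is the easy direction: the largest Carnot subgroup $H = \exp(\text{Lie}(V_1))$ is a closed graded Lie subgroup of $G$, and the restriction of any left-invariant metric on $G$ homogeneous under the grading dilations remains left-invariant and homogeneous on $H$. Since Markov $p$-convexity is inherited by subsets (with $\Pi_p$ not increasing), $\Pi_p(H) \leq \Pi_p(G)$, so every $p$ for which $G$ is Markov $p$-convex also works for $H$.

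For the reverse direction --- that Markov $p$-convexity of $H$ is inherited by $G$ --- the plan is to induct on the codimension $\dim(\g) - \dim(\text{Lie}(V_1))$, the number of graded generators of $\g$ independent of $V_1$. The base case $G = H$ is trivial. For the inductive step, peel off a generator $x$ in the center of $\g$ that lies in some layer $V_i$ with $i \geq 2$ and is independent of $\text{Lie}(V_1)$; such an $x$ exists because the top layer $V_r$ is always central and, if $V_r \sbs \text{Lie}(V_1)$, one may first quotient by a central ideal generated from $V_r$ to reduce the step. This presents $G$ as a central extension $1 \to \R \to G \to G' \to 1$, where $G'$ is a graded nilpotent Lie group with the same Carnot subalgebra as $\g$, and the kernel $\R$ carries the $\frac{1}{i}$-snowflake of its standard metric, which is Markov $2$-convex by virtue of Schoenberg's isometric embedding into Hilbert space. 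By the inductive hypothesis, $G'$ has the same Markov convexity exponent as $H$.

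The principal obstacle is then a three-space type property for graded nilpotent groups: given a central extension $1 \to A \to G \to G' \to 1$ in which $A$ is a snowflaked abelian factor, does $\Pi_p(G) \lesssim \Pi_p(A) + \Pi_p(G')$? For arbitrary metric extensions this kind of statement is false, so one must leverage the specific graded nilpotent structure --- in particular the compatibility of the group law with the grading dilations --- to establish it. One plausible approach is to adapt the proof of Theorem \ref{thm:upperboundmain}, stratifying the estimate layer by layer according to whether that layer is generated by iterated brackets of $V_1$ or instead carries an ``extra'' central direction; for extra directions the Markov $2$-convexity of the snowflake should suffice in place of the cruder exponent $2i$, eventually replacing $2r$ by the Carnot step $2s$ of $H$. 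An alternative global strategy is to realize $G$ as a Lipschitz subquotient of $H \times W$ for a suitable snowflaked abelian $W$ encoding all the extra directions at once, and then invoke subquotient and product stability of Markov convexity directly; the difficulty in this route is that the nontrivial brackets of extra generators with elements of $\text{Lie}(V_1)$ twist the metric structure away from a literal product, and one must control the resulting distortion.
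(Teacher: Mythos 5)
This statement is a \emph{conjecture} in the paper, not a theorem; there is no proof in the paper to compare against, so your proposal stands or falls on its own, and as you yourself flag, it leaves the decisive step open. Beyond that, there is a concrete structural gap earlier in the argument. The inductive peeling-off of a one-dimensional central ideal complementary to the Carnot subalgebra $\mathfrak{h} := \langle \g_1 \rangle$ is not always possible. Consider the step-$3$ graded nilpotent Lie algebra $\g = \g_1 \oplus \g_2 \oplus \g_3$ with $\g_1 = \mathrm{span}(X,Y)$, $\g_2 = \mathrm{span}(Z,W)$, $\g_3 = \mathrm{span}(V)$ and nontrivial brackets $[X,Y]=Z$, $[X,Z]=V$, $[Y,W]=V$ (Jacobi holds). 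Then $\mathfrak{h} = \mathrm{span}(X,Y,Z,V)$ is a step-$3$ Carnot subalgebra of codimension one, missed only in the second layer by $W$; but $[Y,W] = V \neq 0$, so no vector in $\g_2$ outside $\mathfrak{h}_2$ is central, while $\g_1 = \mathfrak{h}_1$ and $\g_3 = \mathfrak{h}_3$. Hence there is no graded central ideal $\R x$ transverse to $\mathfrak{h}$; the smallest ideal containing $W$ also contains $V$, so quotienting by it reduces the step of $\mathfrak{h}$ as well. Your fallback of ``first quotienting by a central ideal generated from $V_r$'' does not help: Markov convexity passes from a space to its Lipschitz quotients, not conversely, so knowing the convexity range of the quotient yields no upper bound on $\Pi_p(G)$. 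The inductive scaffolding therefore fails in general.

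The remaining (and, as you say, principal) obstacle---a three-space property for Markov convexity of snowflaked central extensions of graded nilpotent groups---is genuinely open and is where the real difficulty lies. Neither of your sketched routes is carried through, and the second (realizing $G$ as a Lipschitz subquotient of $H \times W$) runs into exactly the twisting you identify: in examples like the one above the nonvanishing bracket $[\g_1, W]$ prevents $G$ from sitting as a subquotient of a metric product of $H$ with a snowflaked Euclidean factor in any visible way. Note also that the machinery of the paper's Theorem \ref{thm:upperboundmain} (the norms $N_s$ and the $D_s$, $SN_s$ bookkeeping of BCH polynomials) is keyed to the full step $r$ of $\g$ and does not distinguish layers that come from iterated brackets of $\g_1$ from those that do not; proving the conjecture along these lines would require a substantially finer, $\mathfrak{h}$-aware stratification of those estimates than the proposal sketches.
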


\section{Discussion of Proof Methods}
We engage here in informal discussion of the proofs of Theorem \ref{thm:upperboundmain} and \ref{thm:lowerboundmain}. This discussion is intended to give a brief overview of the proofs for readers with a sufficient background in the relevant topics. For Theorem \ref{thm:upperboundmain}, the relevant topics are graded nilpotent Lie algebras, the group structure they inherit via the Baker-Campbell Hausdorff formula, and their graded-homogeneous group quasi-norms. For Theorem \ref{thm:lowerboundmain}, the relevant topics are Markov convexity of diamond-type graphs, jet space Carnot groups, and Khintchine's inequality. Readers unfamiliar with these topic may find this section unuseful.

\subsection{Discussion of Proof of Theorem \ref{thm:upperboundmain}}
The method employed by Mendel-Naor to prove that $p$-convexity of Banach spaces implies Markov $p$-convexity is to:
\begin{enumerate}
\item Invoke the well-known result that $p$-convex Banach spaces have equivalent norms $\|\cdot\|$ satisfying the parallelogram inequality $(\|x\|^p + \|x-y\|^p)/2 - \|y/2\|^p \gtrsim \|x-y/2\|^p$.
\item Prove the 4-point inequality $(2d(y,x)^p+d(z,y)^p+d(y,w)^p)/2 - (d(x,w)/2)^p - (d(x,z)/2)^p \gtrsim d(z,w)^p$, where $d(x,y) = \|x-y\|$.
\item Prove the Markov $p$-convexity inequality, Definition \ref{def:Markovdef}.
\end{enumerate}

We prove the analogous inequalities for graded nilpotent Lie groups:
\begin{enumerate}
\item \label{1} Lemma \ref{lem:convnorm2}. Construct a group quasi-norm $N$ satisfying $(N(x)^p + N(y^{-1}x)^p)/2 - (N(y)/2)^p \gtrsim N(\delta_{1/2}(y)^{-1}x)^p$.
\item \label{2} Lemma \ref{lem:convmetric}. Prove the 4-point inequality $(2d(y,x)^p+d(z,y)^p+d(y,w)^p)/2 - (d(x,w)/2)^p - (d(x,z)/2)^p \gtrsim d(z,w)^p$, where $d(x,y) = N(y^{-1}x)$.
\item \label{3} Prove Theorem \ref{thm:upperboundmain}. The Markov $p$-convexity inequality.
\end{enumerate}

The passage from (\ref{1}) to (\ref{2}) and from (\ref{2}) to (\ref{3}) is exactly the same as in Banach space case. To prove (\ref{1}), we recursively construct a sequence of homogeneous quasi-norms on the group, and prove that they satisfy (1) inductively. Actually, the following stronger version of (\ref{1}) (with $p=2s$, the case $p \geq 2s$ is taken care of later) is needed for the induction to close, this is Lemma \ref{lem:convnorm1}.
$$(N_s(x)^{2s}+N_s(y^{-1}x)^{2s})/2 - (N_s(y)/2)^{2s} \gtrsim SN_s(x,y)^{2s} + D_s(x,y) + N_s(\delta_{1/2}(y)^{-1}x)^{2s}$$
There are two extra terms that appear in this inequality, $SN_s(x,y)$ and $D_s(x,y)$, defined in Definitions \ref{def:Ddef} and \ref{def:SNdef}. $D_s(x,y)$ is designed to bound (up to constants) the square of any BCH polynomial of degree $s$ (see Definition \ref{def:BCHdef}), so one may guess how it would be useful to prove (\ref{1}).

$SN_s(x,y)$ is nearly a positive definite quasi-norm of $(x_1,\dots x_s,y_1, \dots y_s)$ (the name $SN$ is meant to suggest that it is a seminorm instead of a norm, since it is not positive definite), but not quite as it vanishes when $x_1 = y_1/2$ and $x_i=y_i=0$ for $i \geq 2$. However, this is not an issue as we will have an extra $\|y_1\|$ term in the induction, so that $\|y_1\| + SN_s(x,y)$ is genuinely a quasi-norm of $(x_1,\dots x_s,y_1, \dots y_s)$. Here are $D_s$ and $SN_s$ for some small $s$:
$$D_3(x,y) = \|(x_3,y_3)\|^2 + \|(x_1,y_1)\|^2\|(x_2,y_2)\|^2 + \|(x_1,y_1)\|^2\boldsymbol{\tau}^2(x,y)$$
$$D_4(x,y) = \|(x_4,y_4)\|^2 + \|(x_1,y_1)\|^2\|(x_3,y_3)\|^2 + \|(x_2,y_2)\|^4$$
$$+ \|(x_1,y_1)\|^4\|(x_2,y_2)\|^2 + \|(x_2,y_2)\|^2\boldsymbol{\tau}^2(x,y) + \|(x_1,y_1)\|^4\boldsymbol{\tau}^2(x,y)$$
$$SN_3(x,y) = \max(\|x_1-y_1/2\|,\|(x_2,y_2)\|^{1/2},\|(x_3,y_3)\|^{1/3})$$
The polynomial $\boldsymbol{\tau}^2(x,y)$ is designed to bound the squares of terms coming from the bracket between two vectors from the horizontal layer. For example, in the second Heisenberg group,
$$\boldsymbol{\tau}^2(x,y) = (x_{11}y_{12}-x_{12}y_{11})^2 + (x_{13}y_{14}-x_{14}y_{13})^2$$
We recursively construct the quasi-norms $N_{s+1}$ given all the previous quasi-norms by defining $N_{s+1}(x)$ to be an $\ell^{2(s+1)}$ sum of $\lambda_{s+1}\|x_{s+1}\|^{1/(s+1)}$ and the top half of the previously defined quasi-norms, where $\lambda_{s+1}$ is a positive constant chosen small enough (depending on the product structure of the group in question) to make the inequality of Lemma \ref{lem:convnorm1}(\ref{convnorm1}) hold. Specifically, from \eqref{eq:Ndef},
$$N_2(x) = \sqrt[4]{\|x_1\|^4+\lambda_2\|x_2\|^2}$$
$$N_{s+1}(x) = \sqrt[2(s+1)]{\lambda_{s+1}\|x_{s+1}\|^2 + \sum_{s'= \lceil (s+1)/2 \rceil}^s N_{s'}^{2(s+1)}(x)}$$
The reason why we add the top half of the previously defined norms, and the reason for the inclusion $SN_s(x,y)$ term in the inequality, is to help pass from $D_s(x,y)$ to $D_{s+1}(x,y)$ during the proof of the inductive step. When proving the inductive step, we have terms like \\ $(SN_{s'}(x,y)^{2s'} + D_{s'}(x,y))^{(s+1)/s'}$, $s' \leq s$, appearing to which we apply Lemma \ref{lem:binomineq} and obtain a term like $SN_{s'}(x,y)^{2(s+1-s')}D_{s'}(x,y)$. This term bounds $\|(x_{s+1-s'},y_{s+1-s'})\|^2D_{s'}(x,y)$ exactly when $\lceil (s+1)/2 \rceil \leq s' \leq s$. Then summing $\|(x_{s+1-s'},y_{s+1-s'})\|^2D_{s'}(x,y)$ over this range of $s'$ accounts for all the terms in $D_{s+1}(x,y)$, except for the top-layer term $\|(x_{s+1},y_{s+1})\|^2$ (since any other term in $D_{s+1}(x,y)$ contains as a factor a variable from one of the lower half layers, see Lemma \ref{lem:domBCH1} for details), which is accounted for later.

\subsection{Discussion of Proof of Theorem \ref{thm:lowerboundmain}}
We recursively construct a sequence of directed graphs $\Gamma_m$ and maps from them into the jet space of step $r$ ($J^{r-1}(\R)$) to show that it is not Markov $p$-convex for any $p < 2r$. The Markov processes we use are standard directed random walks on the graphs. This is very similar to the method used in \cite{LiMarkov}, where something akin to the Laakso-Lang-Plaut diamond graphs were used. The main feature of those graphs $G_m$ is that $G_{m+1}$ is obtained from $G_i$ by replaced each edge of $G_1$ with a copy of $G_m$. Roughly speaking, Li recursively maps $G_{m+1}$ into $\R^2$ by replacing each edge of a distorted image of $G_1$ by a rotated, distorted copy of the image of $G_i$. The distortion is done in such a way that the coLipschitz constant (the Lipschitz constant of the inverse map) is on the order of $\sqrt[4]{m}\sqrt{\ln(m+1)}$, and the fact that rotations are isometries of the Heisenberg group affords one uniform control on the Lipschitz constants. One can conclude from this that the Heisenberg group is not Markov $p$-convex for $p < 4$ (the 4 coming from the fourth root of $m$).

Our graphs differ from those in \cite{LiMarkov} in that, to obtain $\Gamma_{m+1}$ from $\Gamma_m$, we first glue together \emph{many} copies of $\Gamma_{m}$ together with a small number of copies of a single edge $I$ in series to get a new graph $\Gamma_{m+1}'$, and then replace each edge of $\Gamma_1$ with a copy of $\Gamma_{m+1}'$ (this isn't exactly how our construction is defined, but is close enough to get the main idea). See Definition \ref{def:graphs} for the full details. We will explain the reasoning for this after describing our maps of $\Gamma_m$ into $J^{r-1}(\R)$.

Our maps differ from those in \cite{LiMarkov} in that we \emph{do not} rotate the image of $\Gamma_m$ before using it to replace the edges of the image of $\Gamma_1$, as rotations are not Lipschitz maps in higher step groups like they are in the Heisenberg group. Refer to Figure \ref{fig:Fm} throughout this discussion to get an idea of the construction of these maps. Instead of rotating, we simply add (many copies of) the image of $\Gamma_m$ to a distorted copy of the image of $\Gamma_1$ to obtain the mapping of $\Gamma_{m+1}$ into $\R^2$. More specifically, we map each directed path $\gamma$ in $\Gamma_{m+1}$ to the jet of a function $\phi_\gamma$ - a horizontal curve in $J^{r-1}(\R)$. The Lipschitz constant of this map is controlled by $\left\|\frac{d^r}{d^rx}\phi_\gamma\right\|_\infty$. We still distort the graphs $\Gamma_m$ with the same asymptotics as in \cite{LiMarkov}, so that the coLipschitz constant is on the order of $\sqrt[2r]{m}\sqrt[r]{\ln(m+1)}$ (at least on the pairs of random walks $(X^m_t,\tilde{X}^m_t(t-2^k)$). That we get the $2r^{\text{th}}$ root of $m$ instead of the fourth root of $m$ comes from the fact that $J^{r-1}(\R)$ is of step $r$ and the Heisenberg group is of step 2. One potential problem is that the absence of isometric rotations and the fact that $(\sqrt{m}\ln(m))^{-1}$ isn't summable means $\left\|\frac{d^r}{d^rx}\phi_\gamma\right\|_\infty$ blows up along some paths, and thus we do not have uniform control on the Lipschitz constant of the map, unlike \cite{LiMarkov}. However, $(\sqrt{m}\ln(m))^{-1}$ \emph{is square-summable}, and together with the nature of the image of the random walk $X^m_t$ in $J^{r-1}(\R)$, this allows us to control $\E[d_{CC}(X^m_{t+1},X^m_t)^p]$ uniformly in $m,t$. Loosely, along the random walk in the horizontal layer (which has $x$- and $u_{r-1}$-coordinates), every time one is confronted with a choice of direction to walk in, the choice is to walk 1 unit in the $x$-direction and $+(\sqrt{i}\ln(i+1))^{-1}$ units in the $u_{r-1}$-direction with probability 1/2, or 1 unit in the $x$-direction and $-(\sqrt{i}\ln(i+1))^{-1}$ units in the $u_{r-1}$-direction with probability 1/2 (for some $i$ depending on how far one has walked). Thus, one might expect $d_{CC}(X^m_{t+1},X^m_t)$ to be bounded by a random variable distributed like $1+|\sum_{i=1}^t \eps_i(\sqrt{i}\ln(i+1))^{-1}|$, where $\{\eps_i\}_i$ are iid Rademachers, and then Khintchine's inequality implies we should have a uniform bound on $\E[d_{CC}(X^m_{t+1},X^m_t)^p]$ (which is the real quantity of interest, recall Definition \ref{def:Markovdef}). Of course, the random walk is not distributed like this, but it turns out that this intuition is correct nonetheless, see Lemmas \ref{lem:subgaussian} and \ref{lem:mapintojetspace}(\ref{mapintojetspace4}) for the specifics.

Finally, the reason we use many copies of $\Gamma_m$ in creating $\Gamma_{m+1}$ is so that, compared to the diameter of $\Gamma_{m+1}$, the diameter of the copies of $\Gamma_m$ is very small, and thus those that replaced opposite edges of $\Gamma_1$ don't get too close together, which would ruin the coLipschitz constant. Morally, this ``decouples" any interaction between different scales in $\Gamma_{m+1}$.

\section{Preliminaries} \label{sec:prelim}
The next two subsections don't follow any particular reference, but ones we recommend are \cite{BLU} for Carnot groups and \cite{LeDonne} for graded nilpotent groups. We mostly follow \cite{War} for the subsection on jet spaces.
\subsection{Graded Nilpotent and Stratified Lie Algebras and their Lie Groups} \label{ss:CarnotGroups}
A \emph{graded nilpotent Lie algebra $(\g,[\cdot,\cdot])$ of step $r$} is a Lie algebra equipped with a grading $\g = \oplus_{i=1}^r \g_i$, meaning $\g_r \neq 0$, $[\g_i,\g_j] \sbs \g_{i+j}$ if $i+j \leq r$, and $[\g_i,\g_j] = 0$ if $i+j > r$. A \emph{stratified} Lie algebra $(\g,[\cdot,\cdot])$ of \emph{step} $r$ is a graded nilpotent Lie algebra of step $r$ such that the Lie subalgebra generated by $\g_1$ is all of $\g$. The grading is called a \emph{stratification}, $\g_1$ is often called the \emph{horizontal layer} (or stratum), and $\g$ is said to be \emph{horizontally generated}. Whenever a Lie algebra $\g$ (not presumed to be equipped with a grading) admits a stratification, it is unique (Lemma 2.16, \cite{LeDonne}). A \emph{graded nilpotent Lie group of step $r$} is a simply connected Lie group whose Lie algebra is graded nilpotent of step $r$. A graded nilpotent Lie group whose Lie algebra is stratified is a \emph{Carnot group}. A \emph{graded homomorphism} or \emph{map} is a Lie group homomorphism between graded nilpotent Lie groups whose derivative is a graded Lie algebra homomorphism. One graded nilpotent Lie group $G'$ is a \emph{graded subgroup} of another graded nilpotent Lie group $G$ if there is an injective graded homomorphism from $G'$ into $G$. One graded nilpotent Lie group $G'$ is a \emph{graded quotient group} of another graded nilpotent Lie group $G$ if there is a surjective graded homomorphism from $G$ onto $G'$. One graded nilpotent Lie group $G'$ is a \emph{graded subquotient group} of another graded nilpotent Lie group $G$ if there is another graded nilpotent Lie group $G''$ such that $G''$ is a graded subgroup of $G$ and $G'$ is a graded quotient group of $G''$, or, equivalently, there is another graded nilpotent Lie group $G''$ such that $G''$ is a graded quotient group of $G$ and $G'$ is a graded subgroup of $G''$.

Given a graded nilpotent Lie group $G$ and its Lie algebra $\g$, since $\g$ is nilpotent and $G$ is simply connected, the exponential map is a diffeomorphism, and thus we can use it to equip $\g$ with a graded nilpotent Lie group structure such that it becomes graded isomorphic to $G$. The Baker-Campbell-Hausdorff formula provides a formula for the group product on $\g$ in terms of the Lie algebra structure (Section 2, \cite{War}):
\begin{equation} \label{eq:BCH}
xy = \sum_{n > 0} \frac{(-1)^{n+1}}{n} \sum_{\substack{0<p_i+q_i \\ i \leq i \leq n}} C_{p,q}^{-1}(\ad x)^{p_1}(\ad y)^{q_1} \dots (\ad x)^{p_n}(\ad y)^{q_n-1}y
\end{equation}
where $(\ad x)y = [x,y]$ and $C_{p,q} = p_1!q_1! \dots p_n!q_n! \left(\sum_{i=1}^n p_i+q_i\right)$. In this formula and what follows, whenever $\g$ is a graded nilpotent Lie algebra, we equip it with the product defined by \eqref{eq:BCH} and simultaneously think of $\g$ as a graded nilpotent Lie group and Lie algebra. We will always use juxtaposition to denote the group product.

Every graded nilpotent Lie group $G$ has a canonical family of \emph{dilations} $\delta_t: G \to G$ parametrized by $t \in (0,\infty)$ whose derivative $\delta_t': \g \to \g$ is defined by 
$$\delta_t'(x) := tx_1 + t^2x_2 + \dots t^rx_r$$
where $\g$ is the Lie algebra, and $x_i \in \g_i$ is the $\g_i$-component of $x \in \g$. $t \mapsto \delta_t$ is an automorphic $\R_{> 0}$-action on $G$. It can be deduced that a Lie group homomorphism $\theta$ between graded nilpotent Lie groups is a graded homomorphism if and only if it is $\delta_t$-equivariant, that is, $\theta(\delta_t(x)) = \delta_t(\theta(x))$, where we've abused (and will continue to do so) notation and written $\delta_t$ for the dilation on both the domain and codomain.

\subsection{Norms and Metrics} \label{ss:NormsMetrics}
Let $G$ be a graded nilpotent Lie group. A \emph{homogeneous quasi-norm} on $G$ is a continuous function $N: G \to \R$ such that for all $x \in G$ and $t \in \R_{> 0}$,
\begin{itemize}
\item $N(x) \geq 0$ (positive semi-definite)
\item $N(x^{-1}) = N(x)$ (symmetry)
\item $N(\delta_t(x)) = tN(x)$ (homogeneity)
\end{itemize}
If additionally $N(x) = 0$ implies $x = 0$, then $N$ is a \emph{positive definite} homogeneous quasi-norm, and if $N(xy) \leq N(x) + N(y)$ for all $x,y \in G$ (triangle inequality), $N$ is a \emph{homogeneous norm}. For any two positive definite homogeneous quasi-norms $N,N'$ on $G$, the continuity, homogeneity, and positive definiteness of $N,N'$, together with the compactness of the unit sphere in $\oplus_{i=1}^r \R^{\dim(\g_i)}$, imply that $N$ and $N'$ are biLipschitz equivalent, that is, there is a constant $0 < C < \infty$ such that
$$C^{-1}N(x) \leq N'(x) \leq CN(x)$$
for all $x \in G$.

Positive definite homogeneous norms always exist, most famously those considered in \cite{HS}. Thus any positive definite homogeneous quasi-norm $N$ satisfies the \emph{quasi-triangle inequality}: there is a $0 < C < \infty$ such that for all $x,y \in G$,
$$N(xy) \leq C(N(x) + N(y))$$

Typically one requires that every homogeneous quasi-norm $N$ satisfies the quasi-triangle inequality. Although it turns out that the quasi-norms we consider in this article do satisfy the quasi-triangle inequality, we only need to know this for positive-definite quasi-norms and thus do not explicitly make this requirement.

There is a bijective correspondence between homogeneous, positive definite quasi-norms $N$ on $G$ and left-invariant, homogeneous quasi-metrics $d_N$ on $G$ via $N \mapsto d_N$ defined by
$$d_N(x,y) := N(y^{-1}x)$$
Positive definiteness of $N$ implies positive definiteness of $d_N$, symmetry of $N$ implies symmetry of $d_N$, homogeneity of $N$ implies the homogeneity of $d_N$ (meaning $d_N(\delta_t(x),\delta_t(y)) = td_N(x,y)$), and the quasi-triangle inequality of $N$ implies the quasi-triangle inequality of $d_N$. The left-invariance of $d_N$ is automatic from the definition. $N$ satisfies the triangle inequality if and only if $d_N$ does. The inverse of $N \mapsto d_N$ is $d \mapsto N_d$, where $N_d(x) := d(0,x)$. In addition to those determined by the homogeneous, positive definite norms from \cite{HS}, there are canonical left-invariant, homogeneous metrics on Carnots groups called \emph{Carnot-Caratheodory metrics}, denoted $d_{CC}$. These metrics are also geodesic. See \cite{BLU} or \cite{LeDonne} for further information.

In what follows, whenever dealing with a graded nilpotent Lie group, we will automatically assume it is equipped with a left-invariant, homogeneous quasi-metric. By the preceding discussion, this quasi-metric is well-defined up to biLipschitz equivalence, so any biLipschitz-invariant property of metric spaces we may well attribute to a graded nilpotent Lie group $G$ knowing only the algebraic structure of its graded Lie algebra. The $\delta_t$-equivariance of graded group maps implies that any graded map between graded nilpotent Lie groups is Lipschitz, and thus graded group embeddings are biLipschitz embeddings, graded quotient maps are Lipschitz quotient maps, and graded group isomorphisms are biLipschitz equivalences.

\subsection{Model Filiform Groups and Jet Spaces over $\R$}
\label{ss:MfJs}
We follow \cite{War} (especially Example 4.3) throughout this subsection. The \emph{model filiform group} of step $r \geq 1$ is the Carnot group with stratified Lie algebra $\g = (\R X \oplus \R Y_1) \oplus_{i=2}^{r} \R Y_i$, where $X,Y_1$ is a basis for $\g_1$ and $Y_i$ is a basis for $\g_i$ for $2 \leq i \leq r$, and the nontrivial bracket relations are given by $[X,Y_i] = Y_{i+1}$ for $1 \leq i \leq r-1$. Clearly, for $s \geq r$, there is a canonical Carnot group quotient map from the model filiform group of step $s$ to that of step $r$. The model filiform group of step 2 is frequently called the \emph{Heisenberg group}, and the one of step 3 the \emph{Engel group}. The corresponding Lie algebras are the \emph{Heisenberg algebra} and \emph{Engel algebra}.

The \emph{jet space over $\R$} of step $r \geq 0$, denoted $J^{r-1}(\R)$, is a certain Carnot group of step $r$ graded isomorphic to the model filiform group of step $r$. There are also jet space groups $J^{r-1}(\R^k)$ over higher dimensional Euclidean space, but we will focus on $k=1$ in this discussion. As a set, $J^{r-1}(\R)$ consists of equivalence classes of pairs $(x,f)$ where $x \in \R$ and $f \in C^{r-1}(\R)$. Two pairs $(x,f),(y,g)$ are equivalent if $x = y$ and $f^{(k)}(x) = g^{(k)}(y)$ for all $0 \leq k \leq r-1$. We define maps $\pi_x,\pi_i: J^{r-1}(\R) \to \R$, $0 \leq i \leq r-1$, by $\pi_x([(y,g)]) = y$ and $\pi_i([(y,g)]) = g^{(i)}(y)$. These maps are obviously well-defined and the direct sum map $\pi_x \oplus_{i=0}^{r-1} \pi_{r-1-i}: J^{r-1}(\R) \to \R \cross \R^{r}$ is a bijection. For $v \in J^{r-1}(\R)$, the quantity $\pi_x(v)$ is referred to as the \emph{$x$-coordinate} and $\pi_i(v)$ as the \emph{$u_i$-coordinate}. We equip $J^{r-1}(\R)$ with a topological vector space structure so that this map is a linear homeomorphism, and from this point on will represent elements of $J^{r-1}(\R)$ using these coordinates. We will especially represent elements as pairs $(y,v) \in J^{r-1}(\R) = \R \cross \R^{r}$ so that $y \in \R$, $v \in \R^{r}$, and $\pi_x((y,v)) = y$. Although we won't explicitly use it, the group operation on $J^{r-1}(\R)$ is given by
$$\pi_x((x,u_{r-1},\dots u_0)*(y,v_{r-1},\dots v_0)) = x+y$$
$$\pi_i((x,u_{r-1},\dots u_0)*(y,v_{r-1},\dots v_0)) = u_i+v_i + \sum_{j=i+1}^{r-1} u_j\frac{y^{j-i}}{(j-i)!}$$
Given $y \in \R$ and $g \in C^{r-1}(\R)$, we get an element $[j^{r-1}(y)](g) \in J^{r-1}(\R)$ defined by
$$\pi_x([j^{r-1}(y)](g)) = y$$
$$\pi_i([j^{r-1}(y)](g)) = g^{(i)}(y)$$
called the \emph{jet} of $g$ at $y$. The following two Lemmas are essentially all we need to know about jet spaces. The first is a special case of \cite{RW}. Although their lemma is stated for $C^r$ functions, the proof works the same in the case of $C^{r-1,1}$ functions.

\begin{lemma}[pages 4-5, \cite{RW}] \label{lem:dccupperbound}
For any $[a,b] \sbs \R$ and $\phi \in C^{r-1,1}([a,b])$,
$$d_{CC}([j^{r-1}(b)](\phi),[j^{r-1}(a)](\phi)) \leq \left(1 + \left\|\phi^{(r)}\right\|_{L^\infty([a,b])}\right)|b-a|$$
\end{lemma}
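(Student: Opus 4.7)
The plan is to produce an explicit horizontal curve in $J^{r-1}(\R)$ of the desired length that joins the two jets, and then invoke the definition of $d_{CC}$ as an infimum of lengths of horizontal curves.

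The natural candidate is $\gamma:[a,b]\to J^{r-1}(\R)$ defined by $\gamma(t) := [j^{r-1}(t)](\phi)$. In the coordinates $(x,u_{r-1},\dots,u_0)$ from Section \ref{ss:MfJs}, this is just $\gamma(t) = (t,\phi^{(r-1)}(t),\dots,\phi(t))$. Since $\phi \in C^{r-1,1}([a,b])$, the function $\phi^{(r-1)}$ is Lipschitz, hence absolutely continuous with $\phi^{(r)} \in L^\infty([a,b])$, while $\phi^{(i)}$ for $i\le r-2$ is $C^1$. Therefore $\gamma$ is absolutely continuous and differentiable a.e.\ with $\dot x(t) = 1$ and $\dot u_i(t) = \phi^{(i+1)}(t)$.

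Next I would check that $\gamma$ is horizontal a.e. The horizontal distribution on $J^{r-1}(\R)$ is spanned by the vector fields $X := \partial_x + \sum_{i=0}^{r-2} u_{i+1}\,\partial_{u_i}$ and $Y := \partial_{u_{r-1}}$ (coming from the stratification via the model filiform identification). At the point $\gamma(t)$ one has $u_{i+1} = \phi^{(i+1)}(t)$, so for $i\le r-2$ the component of $\dot\gamma(t)$ along $\partial_{u_i}$ equals $\phi^{(i+1)}(t) = u_{i+1}\dot x(t)$, which is exactly the condition that annihilates the contact forms $du_i - u_{i+1}\,dx$. Consequently
\[
\dot\gamma(t) \;=\; X\bigl(\gamma(t)\bigr) \;+\; \phi^{(r)}(t)\,Y\bigl(\gamma(t)\bigr) \qquad \text{for a.e. } t\in[a,b].
\]
With respect to the standard inner product on the horizontal layer (for which $\{X,Y\}$ is orthonormal), the horizontal speed satisfies $\|\dot\gamma(t)\|_H = \sqrt{1+\phi^{(r)}(t)^2} \le 1 + |\phi^{(r)}(t)| \le 1 + \|\phi^{(r)}\|_{L^\infty([a,b])}$.

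Integrating over $[a,b]$ gives
\[
\mathrm{length}_{CC}(\gamma) \;=\; \int_a^b \|\dot\gamma(t)\|_H\,dt \;\le\; \bigl(1 + \|\phi^{(r)}\|_{L^\infty([a,b])}\bigr)|b-a|,
\]
and since $\gamma(a) = [j^{r-1}(a)](\phi)$ and $\gamma(b) = [j^{r-1}(b)](\phi)$, the definition of $d_{CC}$ as the infimum of lengths of horizontal curves between the endpoints yields the claimed inequality. The only mild subtlety is the low regularity: $\phi^{(r)}$ exists only a.e., so I would be careful to invoke the standard fact that an absolutely continuous curve whose a.e.-defined velocity lies in the horizontal distribution is admissible for the length computation, and that the length formula via $\|\dot\gamma\|_H$ still holds. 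This is the only non-routine step; everything else is a direct computation in the explicit coordinate description of $J^{r-1}(\R)$.
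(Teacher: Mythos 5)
Your argument is correct and is the standard one: the curve $t\mapsto [j^{r-1}(t)](\phi)$ is the canonical horizontal lift, its horizontal speed is $\sqrt{1+\phi^{(r)}(t)^2}\le 1+\|\phi^{(r)}\|_\infty$, and integrating gives the length bound. The paper itself does not reprove this lemma --- it cites \cite{RW} and merely remarks that the argument extends from $C^r$ to $C^{r-1,1}$ regularity --- but the proof you give is essentially the one in that reference, and you correctly flag the only low-regularity subtlety (admissibility of absolutely continuous curves with a.e.\ horizontal velocity).
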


\begin{lemma} \label{lem:dcclowerbound}
There is a constant $c > 0$ such that for all $(x,u),(x,v) \in J^{r-1}(\R)$,
$$d_{CC}((x,u),(x,v)) \geq c|\pi_0(u-v)|^{\frac{1}{r}}$$
\end{lemma}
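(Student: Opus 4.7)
The plan is to reduce the statement to a lower bound on $d_{CC}(0, c)$ for a specific group element $c$, compute $c$ explicitly in jet coordinates, and then extract the $\pi_0$-bound from the interplay of dilation homogeneity and positive definiteness of $d_{CC}$.

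First, by left invariance I would write $d_{CC}((x,u),(x,v)) = d_{CC}(0, c)$ with $c := (x,v)^{-1}(x,u)$, and identify $c$ in jet coordinates. Applying $\pi_x$ to $(x,v) \cdot c = (x,u)$ forces $\pi_x(c) = 0$; plugging this into the explicit formula
$$\pi_i((x,v) \cdot c) = v_i + \pi_i(c) + \sum_{j=i+1}^{r-1} v_j \,\pi_x(c)^{j-i}/(j-i)!$$
kills every correction term and yields $\pi_i(c) = u_i - v_i$ for all $0 \le i \le r-1$. Thus $c = (0, u - v)$ in jet coordinates, so $\pi_0(c) = \pi_0(u - v)$, and it suffices to show $d_{CC}(0, (0, w)) \gtrsim |w_0|^{1/r}$ for all $w \in \R^r$, where $w_0$ denotes the $u_0$-component of $w$.

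Next I would establish that the canonical dilation $\delta_t$ acts in jet coordinates by $\delta_t(x, u_{r-1}, u_{r-2}, \dots, u_0) = (tx,\, tu_{r-1},\, t^2 u_{r-2},\, \dots,\, t^r u_0)$. This follows by a direct check that this rescaling is a group automorphism of the explicit jet product, combined with the uniqueness of dilations that act with weight $t$ on the horizontal layer; in particular $\pi_0$ carries dilation weight $r$. Using the homogeneity of $d_{CC}$ under $\delta_t$ and choosing $t = |w_0|^{-1/r}$, the desired inequality reduces to showing that the continuous function $\Phi(w) := d_{CC}(0, (0, w))$ is bounded below by a strictly positive constant on $\{w \in \R^r : |w_0| = 1\}$.

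Finally, I would close the argument without needing compactness of that slice: if $\Phi(w^{(n)}) \to 0$ along a sequence with $|w_0^{(n)}| = 1$, then since $d_{CC}$ is positive definite and induces the manifold topology on $J^{r-1}(\R)$, the elements $(0, w^{(n)})$ would converge to the identity in the group, forcing every jet coordinate (in particular $w_0^{(n)}$) to tend to $0$, contradicting $|w_0^{(n)}| = 1$. The main obstacle is the first step: the clean identity $c = (0, u-v)$ depends on the fortunate vanishing of $\pi_x(c)$, which trivializes every nonlinear cross term in the jet product formula. After that, the lemma is a formal consequence of the fact that $u_0$ lies in the top layer of the stratification and is therefore detected by $d_{CC}$ only at the $r$-th root scale.
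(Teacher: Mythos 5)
Your proposal is correct, and it reaches the lemma by a more self-contained route than the paper. The paper's proof is essentially two citations: it invokes the ball-box theorem (Corollary~2.2 of Jung's paper on CC spheres) to get $d_{CC}(0,g) \gtrsim |\pi_0(g)|^{1/r}$, and it invokes Lemma~3.1 of another Jung paper for the identity $\pi_0((x,v)^{-1}(x,u)) = \pi_0(u-v)$. You replace both citations with direct arguments. For the algebraic identity, your observation that $\pi_x(c)=0$ (because $\pi_x$ is a homomorphism to $(\R,+)$), which kills every nonlinear correction term in the explicit jet product from Section~3.3, is exactly the right shortcut and gives the stronger statement $c=(0,u-v)$. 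For the quantitative lower bound, you replace the ball-box theorem with a dilation-plus-topology argument: normalize by $\delta_t$ with $t=|w_0|^{-1/r}$ (using that $\pi_0$ sits in the top layer, weight $r$), then rule out $\Phi \to 0$ on $\{|w_0|=1\}$ via the fact that $d_{CC}$ is positive definite and induces the Euclidean topology. You correctly flag that this slice is noncompact and close the gap without compactness; the fact that $d_{CC}$ induces the manifold topology is itself a standard consequence of Chow--Rashevskii or weak ball-box estimates, so your argument is not circular but is strictly weaker in what it imports. The trade-off: the paper's version is shorter and leverages sharp structural results available in the literature, whereas yours buys self-containment and transparency about exactly which structural facts about $d_{CC}$ are needed (homogeneity, positive definiteness, topology-compatibility), at the cost of some length.
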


\begin{proof} By left invariance of $d_{CC}$ and the ball-box theorem (see Corollary 2.2 of \cite{JungSpheres}, there is a constant $c > 0$ such that for all $(x,u),(x,v) \in J^{r-1}(\R)$,
$$d_{CC}((x,u),(x,v)) \geq c|\pi_0((x,v)^{-1}(x,u))|^{\frac{1}{r}}$$
and by Lemma 3.1 from \cite{JungGromov},
$$\pi_0((x,v)^{-1}(x,u)) = \pi_0(u-v)$$
\end{proof}

The following lemma will be used to obtain lower bounds on the Markov convexity of Carnot groups of step 2 or 3.

\begin{lemma} \label{lem:EngelSubQuot}
Every Carnot group of step 2 or 3 contains the model filiform group of the corresponding step (the Heisenberg or Engel group) as a graded subquotient group.
\end{lemma}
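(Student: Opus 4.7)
I would split into the step $2$ and step $3$ cases, with the latter being the main challenge.

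For step $2$: let $G$ be a Carnot group of step $2$ with stratified Lie algebra $\mathfrak{g} = \mathfrak{g}_1 \oplus \mathfrak{g}_2$. Since $\mathfrak{g}_2 = [\mathfrak{g}_1, \mathfrak{g}_1] \neq 0$, pick $X, Y \in \mathfrak{g}_1$ with $[X, Y] \neq 0$. Then $\mathfrak{h} := \mathbb{R} X \oplus \mathbb{R} Y \oplus \mathbb{R}[X, Y]$ is a graded Lie subalgebra of $\mathfrak{g}$ (all higher brackets vanish by the step assumption), and it is manifestly isomorphic to the Heisenberg algebra, so the Heisenberg group embeds as a graded subgroup.

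For step $3$, let $\mathfrak{g} = \mathfrak{g}_1 \oplus \mathfrak{g}_2 \oplus \mathfrak{g}_3$. The first and main step is to find $X, Y \in \mathfrak{g}_1$ with $[X, [X, Y]] \neq 0$. I would argue by contradiction using polarization: assuming $[A, [A, B]] = 0$ for all $A, B \in \mathfrak{g}_1$, replacing $A$ with $A + tC$ and reading off the coefficient of $t$ yields the identity $[A, [C, B]] + [C, [A, B]] = 0$ for all $A, B, C \in \mathfrak{g}_1$. Combining this with the Jacobi identity $[A, [C, B]] + [C, [B, A]] + [B, [A, C]] = 0$ and a few bracket-swaps forces $[C, [A, B]] = 0$ for every $A, B, C \in \mathfrak{g}_1$, hence $\mathfrak{g}_3 = [\mathfrak{g}_1, [\mathfrak{g}_1, \mathfrak{g}_1]] = 0$, contradicting the step-$3$ hypothesis.

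Having fixed such $X, Y$, let $\mathfrak{h}$ be the graded subalgebra of $\mathfrak{g}$ generated by $X$ and $Y$. Then $\mathfrak{h}_1 = \operatorname{span}(X, Y)$, $\mathfrak{h}_2 = \operatorname{span}([X, Y])$, and $\mathfrak{h}_3 = \operatorname{span}([X,[X,Y]],\, [Y,[X,Y]])$ has dimension $1$ or $2$. The remaining task is to identify a graded ideal $I \subseteq \mathfrak{h}_3$ (automatically an ideal since $[\mathfrak{h}_1, \mathfrak{h}_3] = [\mathfrak{h}_2, \mathfrak{h}_3] = 0$) so that $\mathfrak{h}/I$ is the Engel algebra. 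If $\dim \mathfrak{h}_3 = 2$, take $I = \operatorname{span}([Y,[X,Y]])$; since $[X,[X,Y]]$ survives in the quotient, one reads off the Engel relations directly. If $\dim \mathfrak{h}_3 = 1$, then $[Y,[X,Y]] = \lambda [X,[X,Y]]$ for some $\lambda$, and replacing $Y$ by $Y - \lambda X$ (which fixes $[X,Y]$ and $[X,[X,Y]]$) annihilates $[Y,[X,Y]]$, so $\mathfrak{h}$ itself is Engel and $I = 0$ works. Either way the Engel group is realized as a graded subquotient of $G$.

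The main obstacle I anticipate is the polarization argument identifying the existence of $X, Y$ with $[X, [X, Y]] \neq 0$; the case analysis for the quotient is mostly bookkeeping once the good pair $X, Y$ is in hand.
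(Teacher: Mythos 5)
Your proof is correct, and it takes a genuinely different — and in my view cleaner — route than the paper in the step-$3$ case. For step $2$ the two proofs are the same in substance.

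For step $3$, the paper first passes to a graded quotient so that $\dim\g_3=1$, then seeks $Z_1,Z_2\in\g_1$ with $[Z_1,[Z_1,Z_2]]\neq 0$, and finally embeds the Engel algebra into the quotient via an explicit correction $Y_1\mapsto Z_2-\tfrac{z'}{z}Z_1$. To find $Z_1,Z_2$, the paper writes the generator $W$ of $\g_3$ as $[U_1,[U_2,U_3]]$ and shows by a somewhat ad hoc case analysis of six specific triple brackets, combined with the Jacobi identity, that one of them must be nonzero. You instead establish the existence of $X,Y\in\g_1$ with $[X,[X,Y]]\neq 0$ directly in $\g$ via polarization of the hypothetical identity $[A,[A,B]]\equiv 0$: the $t$-linear term of $[A+tC,[A+tC,B]]=0$ gives $[A,[C,B]]+[C,[A,B]]=0$, and combining this with Jacobi forces $3[C,[A,B]]=0$, hence $\g_3=0$, a contradiction. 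This is the same phenomenon the paper's six-term case analysis is detecting, but the polarization argument is uniform and avoids the combinatorics. You then take the subalgebra $\mathfrak{h}$ generated by $X,Y$ and, in the $\dim\mathfrak{h}_3=2$ case, quotient out the one-dimensional ideal $\operatorname{span}([Y,[X,Y]])$, or, in the $\dim\mathfrak{h}_3=1$ case, replace $Y$ by $Y-\lambda X$ (the same normalization the paper uses). So you build the subquotient as subgroup-then-quotient where the paper does quotient-then-subgroup; both orders are permitted by the definition and the paper even notes the equivalence. The only small thing worth adding when writing this up is the one-line observation that $[X,[X,Y]]\neq 0$ forces $X,Y$ to be linearly independent, so $\mathfrak{h}_1$ is genuinely $2$-dimensional.
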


\begin{proof}
Let $G$ be a Carnot group of step 2 with stratified Lie algebra $\g = \g_1 \oplus \g_2$. Since $\g$ has step 2, there is a nonzero $V_2 \in \g_2$. Since $\g$ is horizontally generated, there exist $U,V_1 \in \g_1$ such that $[U,V_1] = V_2$. Recall that the Heisenberg algebra has first layer generated by linearly independent vectors $X,Y_1$, second layer generated by $Y_2 \neq 0$, and nontrivial bracket relation $[X,Y_1] = Y_2$. Then it easily follows that $X \mapsto U$, $Y_1 \mapsto V_1$, $Y_2 \mapsto V_2$ is a graded algebra embedding into $\g$. This proves that the Heisenberg group is a graded subgroup of $G$.

Now assume $G$ is of step 3 with stratified Lie algebra $\g = \g_1 \oplus \g_2 \oplus \g_3$. By the grading property, any subspace of $\g_3$ is an ideal, and thus there is a graded algebra quotient map onto another step 3 stratified Lie algebra whose third layer is one dimensional. Thus we may assume $\g_3 = \R W$, $W \neq 0$, and prove that the Engel algebra embeds into $\g$. Since $\g$ is horizontally generated, $W = [U_1,[U_2,U_3]]$ for some $U_1,U_2,U_3 \in \g_1$. First we claim that there is a 2-dimensional subspace of the span of $U_1,U_2,U_3$ that generates a Lie subalgebra of step 3. After proving the claim, we'll show that this subalgebra must be graded algebra-isomorphic to the Engel algebra. To prove the claim, we'll show that at least one of the following is nonzero:
\begin{enumerate}
\item \label{liebrack1} $[U_1,[U_1,U_2]]$
\item \label{liebrack2} $[U_1,[U_1,U_3]]$
\item \label{liebrack3} $[U_2,[U_2,U_3]]$
\item \label{liebrack4} $[U_3,[U_3,U_2]]$
\item \label{liebrack5} $[U_1+U_2,[U_1+U_2,U_3]]$
\item \label{liebrack6} $[U_1+U_3,[U_1+U_3,U_2]]$
\end{enumerate}
Assume that all terms are 0. First let's see that $[U_2,[U_3,U_1]] = W$.
$$0 \overset{(\ref{liebrack5})}{=} [U_1+U_2,[U_1+U_2,U_3]] = [U_1,[U_1,U_3]] + [U_1,[U_2,U_3]] + [U_2,[U_1,U_3]] + [U_2,[U_2,U_3]]$$
$$\overset{(\ref{liebrack2}),(\ref{liebrack3})}{=} W + [U_2,[U_1,U_3]] = W - [U_2,[U_3,U_1]]$$
Using $(\ref{liebrack6}),(\ref{liebrack1}),(\ref{liebrack4})$ in place of $(\ref{liebrack5}),(\ref{liebrack2}),(\ref{liebrack3})$ shows $[U_3,[U_1,U_2]] = W$.
Putting these together yields:
$$[U_1,[U_2,U_3]] + [U_2,[U_3,U_1]] + [U_3,[U_1,U_2]] = 3W \neq 0$$
in violation of the Jacobi identity. This proves the claim.

So now the situation is that there are $Z_1,Z_2 \in \g_1$ with $[Z_1,[Z_1,Z_2]] = zW$ for some $z \neq 0$. Recall that the Engel algebra has first layer spanned by $X,Y_1$, second layer by $Y_2$, and third layer by $Y_3$ with nontrivial bracket relations $[X,Y_1] = Y_2$ and $[X,Y_2] = Y_3$. Let $z' \in \R$ such that $[Z_2,[Z_1,Z_2]] = z'W$. Then since $[Z_1,[Z_1,Z_2]] = zW \neq 0$,
the map from the Engel algebra into $\g$ defined by
$$X \mapsto Z_1, \hspace{.3in} Y_1 \mapsto Z_2 - \frac{z'}{z}Z_1, \hspace{.3in} Y_2 \mapsto [Z_1,Z_2], \hspace{.3in} Y_3 \mapsto zW$$
is a graded algebra embedding.
\end{proof}

\begin{remark}
The analogue of Lemma \ref{lem:EngelSubQuot} is false for groups of step larger than 3. Let $\g$ be the stratified Lie algebra $\g = \oplus_{i=1}^4 \g_i$ with $\g_1 = \R X_{11} \oplus \R X_{12}$, $\g_2 = \R X_2$, $\g_3 = \R X_{31} \oplus \R X_{32}$, $\g_4 = \R X_4$ and nontrivial brackets $[X_{11},X_{12}] = X_2$, $[X_{11},X_2] = X_{31}$, $[X_{12},X_2] = X_{32}$, $[X_{11},X_{31}] = X_4$, $[X_{12},X_{32}] = X_4$. The only graded quotient maps from $\g$ onto another step 4 stratified Lie algebra or graded embeddings into $\g$ from another step 4 stratified Lie algebra are isomorphisms.
\end{remark}

\subsection{Infinite Step Carnot groups}
\label{ss:InfStep}
Given an inverse system of graded nilpotent Lie groups $G_1 \overset{\rho_1}{\leftarrow} G_2 \overset{\rho_2}{\leftarrow}\dots$, where each $\rho_i$ is a graded quotient map, we define the \emph{inverse limit metric group}, $G_\infty$, to be the subgroup of $(\oplus_{i=1}^\infty G_i)_{\infty}$ consisting of those sequences $(x_i)_{i=1}^\infty$ for which $\rho(x_{i+1}) = x_i$ for all $i \geq 1$, where $(\oplus_{i=1}^\infty G_i)_{\infty}$ is the $\ell_\infty$-sum of the pointed metric spaces $(G_i,d_{CC},0)$. $G_\infty$ inherits a left-invariant homogeneous metric from $(\oplus_{i=1}^\infty G_i)_{\infty}$ (where the dilations $\delta_t$ are defined on $G_\infty$ in the obvious way), and each $G_i$ is a Lipschitz quotient of $G_\infty$.

\begin{definition}
$J^\infty(\R^k)$ is the inverse limit metric group, equipped with the induced $\delta_t$-action, associated to the natural inverse system formed by the jet space groups, $J^0(\R^k) \overset{\rho_1}{\leftarrow} J^1(\R^k) \overset{\rho_2}{\leftarrow} \dots$. See \cite{War} for background on jet space groups. Similarly, $F_k^\infty$ is the inverse limit metric group, equipped with the induced $\delta_t$-action, associated to the natural inverse system formed by the free Carnot groups on $k$ generators, $F_k^1 \overset{\rho_1}{\leftarrow} F_k^2 \overset{\rho_2}{\leftarrow} \dots$. See Chapter 14 of \cite{BLU} for background on free Carnot groups.
\end{definition}
%
%

\subsection{Probabilistic and Convexity Inequalities}
In this article, we will often justify an inequality with the phrase ``by convexity" or ``by the parallelogram law". The convexity inequalities we refer to are almost always of the form
$$\frac{a^p+b^p}{2} \geq \left(\frac{a+b}{2}\right)^p$$
or
$$a^p+b^p \leq \left(a+b\right)^p$$
for $p \geq 1$ and $a,b \geq 0$. The form of the parallelogram law we most often use is
$$\frac{\|u\|^2 + \|u-v\|^2}{2} = \|v/2\|^2 + \|u-v/2\|^2$$
for $u,v$ in a Hilbert space, which implies the inequality
$$\|u\|^2 + \|u-v\|^2\geq \frac{\|v\|^2}{2}$$
We may also use either of these inequalities without explicitly mentioning convexity or the parallelogram law.

We collect here some basic inequalities related to convexity and an additional one on $L^p$-norms of random variables.

\begin{lemma} \label{lem:binomineq}
For all $a,b \geq 0$ and $q \geq 1$,
$$(a+b)^q \geq a^q + qa^{q-1}b$$
\end{lemma}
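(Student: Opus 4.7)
The plan is to reduce the inequality to Bernoulli's inequality $(1+x)^q \geq 1+qx$, valid for all $x \geq 0$ and $q \geq 1$. First dispose of the degenerate cases. If $q=1$, both sides equal $a+b$. If $a=0$ and $q>1$, the right-hand side vanishes and the inequality is trivial. So assume $a>0$ and $q>1$, and divide the desired inequality through by $a^q$: letting $x := b/a \geq 0$, what must be shown becomes exactly $(1+x)^q \geq 1 + qx$.

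For Bernoulli, I would set $g(x) := (1+x)^q - 1 - qx$ on $[0,\infty)$, observe $g(0)=0$, and compute
$$g'(x) = q(1+x)^{q-1} - q = q\bigl((1+x)^{q-1} - 1\bigr).$$
Since $q-1 \geq 0$ and $1+x \geq 1$, we have $(1+x)^{q-1} \geq 1$, so $g' \geq 0$ on $[0,\infty)$. Hence $g$ is nondecreasing, giving $g(x) \geq g(0) = 0$, which is the desired Bernoulli inequality.

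There is essentially no obstacle; this is an elementary single-variable calculus fact recorded for later reference. As an alternative to the Bernoulli reduction, one may argue directly by convexity: the function $t \mapsto t^q$ is convex on $[0,\infty)$ when $q \geq 1$, so it lies above each of its tangent lines. The tangent at $t=a$ has equation $a^q + qa^{q-1}(t-a)$, and evaluating the resulting inequality $t^q \geq a^q + qa^{q-1}(t-a)$ at $t = a+b$ yields the lemma at once.
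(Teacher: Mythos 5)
Your proposal is correct and follows essentially the same route as the paper: normalize by $a^q$ to reduce to Bernoulli's inequality $(1+t)^q \geq 1+qt$, which the paper then justifies by noting the left side is convex with the right side as its linearization at $t=0$ — exactly your "alternative" tangent-line argument. Your main derivative/monotonicity verification of Bernoulli is a minor variant of the same elementary fact, so there is no substantive difference.
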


\begin{proof}
Let $a,b,q$ be as above. The inequality is obviously true if $a = 0$. Then if $a > 0$, after dividing each side by $a^q$ and replacing $b/a$ with $t$, it suffices to prove $(1+t)^q \geq 1 + qt$. This inequality is true since the right hand is the linearization of the left hand side at $t=0$, and the left hand side is a convex function of $t$. 
\end{proof}

\begin{lemma} \label{lem:psum}
For each $p > 0$ and $k \geq 1$,
$$\sum_{t=1}^k (2t)^p > k^{p+1}/2$$
\end{lemma}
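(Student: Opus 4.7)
The plan is to discard the small terms and lower bound the sum by retaining only those indices $t$ for which $2t \geq k$. For such $t$ we have $(2t)^p \geq k^p$ (since $p>0$), and the integer interval $\{\lceil k/2\rceil,\dots,k\}$ contains at least $(k+1)/2$ elements.

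Concretely, I would write
$$\sum_{t=1}^k (2t)^p \;\geq\; \sum_{t=\lceil k/2\rceil}^k (2t)^p \;\geq\; \bigl(k - \lceil k/2\rceil + 1\bigr)\,k^p \;\geq\; \frac{k+1}{2}\,k^p \;>\; \frac{k^{p+1}}{2},$$
where the second inequality uses $2t \geq k$ combined with $p>0$, and the bound $k - \lceil k/2\rceil + 1 \geq (k+1)/2$ is an immediate parity check (it equals $k/2 + 1$ when $k$ is even and exactly $(k+1)/2$ when $k$ is odd).

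There is essentially no real obstacle: the lemma is a routine estimate and the calculation above handles all $p > 0$ uniformly. An alternative route would be the Riemann-sum comparison $\sum_{t=1}^k (2t)^p \geq \int_0^k (2x)^p \, dx = 2^p k^{p+1}/(p+1)$ followed by verifying $2^{p+1} > p+1$ for every $p > 0$ (easy from $2 > 1$ at $p=0$ plus $\frac{d}{dp}(2^{p+1} - (p+1)) \geq 2\ln 2 - 1 > 0$), but this adds an extra exponential-vs-linear check, so the truncation argument above is cleaner.
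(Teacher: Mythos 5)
Your proposal is correct, and your primary (truncation) argument is a genuinely different route from the paper's. The paper actually uses the \emph{alternative} you mention in passing: it bounds the sum below by $\int_0^k (2t)^p\,dt = \frac{2^p}{p+1}k^{p+1}$ (using that $t\mapsto(2t)^p$ is increasing, so the sum is a right-endpoint Riemann sum for this integral) and then verifies $\frac{2^p}{p+1}\geq\frac12$, i.e.\ $2^{p+1}\geq p+1$. Your truncation argument discards the terms with $2t<k$ and counts the survivors, yielding $\sum_{t=\lceil k/2\rceil}^k (2t)^p \geq \frac{k+1}{2}k^p > \frac{k^{p+1}}{2}$; the parity check on $k-\lceil k/2\rceil+1\geq\frac{k+1}{2}$ is as you state, and the final strict step ($k+1>k$) gives the required strict inequality. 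Your route is purely arithmetic and avoids both calculus and the exponential-vs-linear verification, so it is arguably more elementary; the paper's integral comparison is a stock technique that generalizes mechanically to other monotone summands. Both proofs are short and correct.
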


\begin{proof}
Let $p > 0$ and $k \geq 1$. Since the function $t \mapsto (2t)^p$ is increasing,
$$\sum_{t=1}^k (2t)^p > \int_{0}^k (2t)^p dt = \frac{2^p}{p+1}k^{p+1} \geq k^{p+1}/2$$
\end{proof}

The following two lemmas are frequently used in tandem to prove Khintchine's inequality (for example, Proposition 4.5 of \cite{Wolff}). We will need them for a similar inequality used in Section \ref{ss:lowerbound}.

\begin{lemma} \label{lem:coshexp}
For all $y \in \R$, $\cosh(y) \leq \exp(y^2/2)$.
\end{lemma}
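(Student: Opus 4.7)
The plan is to compare the Taylor series of both sides term by term. Both $\cosh(y)$ and $\exp(y^2/2)$ are even entire functions with nonnegative Taylor coefficients, so it suffices to prove the coefficient-wise inequality. First I would write
\begin{equation*}
\cosh(y) = \sum_{n=0}^{\infty} \frac{y^{2n}}{(2n)!}, \qquad \exp(y^2/2) = \sum_{n=0}^{\infty} \frac{y^{2n}}{2^n\, n!},
\end{equation*}
so the lemma reduces to verifying $2^n\, n! \leq (2n)!$ for every integer $n \geq 0$.

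Next I would establish this elementary factorial inequality by grouping even and odd factors in $(2n)!$: the even factors $2,4,\dots,2n$ multiply to $2^n\, n!$, while the remaining odd factors $1,3,\dots,2n-1$ are each at least $1$, so
\begin{equation*}
(2n)! = \bigl(2^n\, n!\bigr)\cdot \prod_{k=1}^{n}(2k-1) \geq 2^n\, n!.
\end{equation*}
Dividing through and summing the resulting term-by-term inequality $y^{2n}/(2n)! \leq y^{2n}/(2^n n!)$ over $n\geq 0$ (valid since every term is nonnegative for $y\in\R$) yields $\cosh(y) \leq \exp(y^2/2)$.

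There is no real obstacle here: the only small thing to note is that the termwise bound holds for all real $y$ because we are comparing series in the single variable $y^{2}\geq 0$, so signs do not interfere. An alternative route would be to set $f(y) = \exp(y^2/2) - \cosh(y)$, compute $f(0) = 0$, $f'(0) = 0$, and check $f''(y) \geq 0$ via the same series comparison, but the Taylor-coefficient approach is the cleanest.
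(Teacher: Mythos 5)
Your proof is correct and takes essentially the same route as the paper: the paper also compares the Taylor series of $\cosh(y)$ and $\exp(y^2/2)$ term by term, with the inequality $(2n)! \geq 2^n\, n!$ implicit in the step $\frac{y^{2k}}{(2k)!} \leq \frac{(y^2/2)^k}{k!}$. You simply make that factorial inequality explicit.
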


\begin{proof}
Let $y \in \R$.
$$\cosh(y) = \frac{e^{y}+e^{-y}}{2} = \frac{1}{2}\sum_{k=0}^\infty \frac{y^k+(-y)^k}{k!} = \sum_{k=0}^\infty \frac{y^{2k}}{(2k)!} \leq \sum_{k=0}^\infty \frac{(y^2/2)^k}{k!} = \exp\left(\frac{y^2}{2}\right)$$
\end{proof}

\begin{lemma} \label{lem:subgaussian}
For each $p \geq 1$ and $0 < A,B < \infty$, there is a constant $C = C(p,A,B) < \infty$ such that any real-valued random variable $Y$ satisfying the moment generating function subgaussian bound
$$\E[\exp(yY)] \leq Ae^{By^2}$$
also satisfies the $L^p$-norm bound
$$\E[|Y|^p] \leq C$$
\end{lemma}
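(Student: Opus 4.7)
The plan is to follow the standard Chernoff-plus-layer-cake argument that converts a subgaussian moment generating function bound into a Gaussian tail bound and then into control of every polynomial moment. I would first apply Markov's inequality to the nonnegative variable $e^{yY}$ for $y > 0$ and any $t \geq 0$, obtaining
\[
\P(Y \geq t) \leq e^{-yt}\E[e^{yY}] \leq A\, e^{By^2 - yt}.
\]
Optimizing the right-hand side in $y$ by the choice $y = t/(2B)$ gives $\P(Y \geq t) \leq A e^{-t^2/(4B)}$. Running the same argument with $y < 0$ (the hypothesis is implicitly assumed to hold for all $y \in \R$, which is the standard meaning of ``subgaussian moment generating function bound'') yields $\P(Y \leq -t) \leq A e^{-t^2/(4B)}$, and therefore $\P(|Y| \geq t) \leq 2A e^{-t^2/(4B)}$ for every $t \geq 0$.

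Second, I would invoke the layer-cake representation to write
\[
\E[|Y|^p] = \int_0^\infty p\, t^{p-1}\,\P(|Y| \geq t)\, dt \leq 2Ap \int_0^\infty t^{p-1} e^{-t^2/(4B)}\, dt.
\]
The final integral is a standard Gamma-function computation: substituting $s = t^2/(4B)$ it equals $\tfrac{1}{2}(4B)^{p/2}\Gamma(p/2)$, which is finite for every $p \geq 1$ and depends only on $p$ and $B$. Taking $C := Ap (4B)^{p/2}\Gamma(p/2)$ then produces the required uniform bound, and this constant depends only on $A$, $B$, $p$, as claimed.

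There is essentially no obstacle in this lemma; it is a bookkeeping packaging of the classical fact that subgaussian MGF control implies Gaussian tail decay, which in turn trivially dominates every polynomial moment. The only minor care is to record that the hypothesis is applied at both positive and negative values of $y$ in the optimization step, and to choose the optimizer $y = t/(2B)$ explicitly so that the resulting constant is manifestly a function of $A$, $B$, $p$ alone.
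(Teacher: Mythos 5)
Your proposal is correct and follows essentially the same argument as the paper: Markov's inequality applied to $e^{yY}$ with the choice $y = t/(2B)$ to get the two-sided subgaussian tail bound $\P(|Y| \geq t) \leq 2Ae^{-t^2/(4B)}$, followed by the layer-cake representation of $\E[|Y|^p]$. The only cosmetic difference is that you make the Gamma-function value of the constant explicit, whereas the paper simply observes the integral is finite.
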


\begin{proof}
This is a standard result from the theory of subgaussian random variables whose proof appears in any text on measure concentration. For the sake of completeness we'll include the proof, roughly following the proof of Proposition 4.5 from \cite{Wolff}. Let $p$, $A$, $B$, $Y$ be as above. For any $t > 0$, Markov's inequality and our assumption imply
$$\P(Y \geq t) = \P\left(\exp\left(\frac{t}{2B}Y\right) \geq \exp\left(\frac{t^2}{2B}\right)\right) \leq \exp\left(-\frac{t^2}{2B}\right)\E\left[\exp\left(\frac{t}{2B}Y\right)\right]$$
$$\leq A\exp\left(-\frac{t^2}{2B}+\frac{t^2}{4B}\right) = A\exp\left(-\frac{t^2}{4B}\right)$$
Likewise,
$$\P(Y \leq -t) \leq A\exp\left(-\frac{t^2}{4B}\right)$$
giving us
$$\P(|Y| \geq t) \leq 2A\exp\left(-\frac{t^2}{4B}\right)$$
We then use the layer cake principle to calculate $\E[|Y|^p]$:
$$\E[|Y|^p] = p\int_0^\infty t^{p-1}\P(|Y| \geq t) dt \leq p\int_0^\infty t^{p-1}2A\exp\left(-\frac{t^2}{4B}\right) dt = C(p,A,B) < \infty$$
\end{proof}

\section{Upper Bound on Markov Convexity of Graded Nilpotent Lie Groups}
Throughout this section, fix a graded nilpotent Lie algebra $(\g,[\cdot,\cdot])$ of step $r \geq 2$ with grading $\oplus_{i=1}^r \g_i$ and dim$(\g_i) = k_i$. Choose an ordered basis $U_{i,1}, \dots U_{i,k_i}$ for each $\g_i$ and equip $\g$ with a Hilbert norm $\|\cdot\|$ such that these vectors form an orthonormal basis. We also use $\|\cdot\|$ to denote the Euclidean norm on any $\R^n$. Given $x \in \g$, let $x_i \in \g_i$ denote its $\g_i$-component. Given $x_i \in \g_i$, let $x_{i,j} \in \R$ denote its $U_{i,j}$-component. Thus,
$$\|x\|^2 = \sum_{i=1}^r \|x_i\|^2 \hspace{.5in} \text{and} \hspace{.5in} \|x_i\|^2 = \sum_{j=1}^{k_i} |x_{i,j}|^2$$
Consider $\g$ as a graded nilpotent Lie group as in Section \ref{sec:prelim}. It's easy to see that $0$ is the group identity element and $x^{-1} = -x$. Whenever $u,v \in \g$ or $u,v \in \R^n$, we use the notation $\|(u,v)\|^2$ to mean $\|u\|^2 + \|v\|^2$.

\subsection{BCH Polynomials}
\begin{definition} \label{def:BCHdef}
For $s \geq 0$, a function $P: \g \cross \g \to \R$ that is a monomial(polynomial) in the variables $x_{n,m},y_{n,m}$ is a \emph{graded-homogeneous monomial(polynomial) of degree $s$} if $P(\delta_t(x),\delta_t(y)) = t^sP(x,y)$ for all $x,y \in \g$ and $t \in \R_{>0}$. Clearly, any graded-homogeneous polynomial of degree $s$ must be a sum of graded-homogeneous monomials of degree $s$.

In this section, a \emph{multiset} is a finite sequence of positive integers modulo permutations. Disjoint unions $I_1 \sqcup I_2$ of multisets are defined in the obvious way. Given a multiset $I$, $\|I\|_1$ denotes the sum of the elements and $\|I\|_\infty$ the maximum of the elements. Given a nonzero graded-homogeneous monomial $M$ of degree $s$, we associate to it a multiset $I(M)$ defined recursively on the number of variables in the monomial by $I(M) = \{i\} \sqcup I(M')$ if $M(x,y) = x_{i,n}M'(x,y)$ or $M(x,y) = y_{i,n}M'(x,y)$ for some $n \leq k_i$ and graded-homogeneous polynomial $M'$ of degree $s-i$ (the base case is $I(1) = \emptyset$). By the homogeneity property, it must hold that if $M$ is nonzero and graded-homogeneous of degree $s$, $\|I(M)\|_1 = s$.

For $s \geq 1$, let $\ones_s$ denote the unique multiset with $\|\ones_s\|_1 = s$ and $\|\ones_s\|_\infty = 1$ (and $\ones_0 = \emptyset$). For each $n,m \leq k_1$, let $\tau_{n,m}(x,y) := x_{1,n}y_{1,m} - x_{1,m}y_{1,n}$. A graded-homogeneous polynomial $P$ of degree $s \geq 2$ is of \emph{$\tau$-type} if $P(x,y) = \tau_{n,m}(x,y)M'(x,y)$ for some $n,m \leq k_1$ and graded-homogeneous monomial $M'$ with $I(M') = \ones_{s-2}$.

A graded homogeneous polynomial of degree $s \geq 2$ of the form $\sum_j Q_j$ (the sum is finite), where each $Q_j$ is of $\tau$-type or a graded-homogeneous monomial of degree $s$ with $1 < \|I(Q_j)\|_\infty < s$ is called a \emph{BCH polynomial of degree $s$}.
\end{definition}

\begin{remark} \label{rmk:sametype}
Obviously a sum of BCH polynomials of degree $s$ is another such polynomial. If $P$ is a BCH polynomial of degree $s$, $1 \leq i \leq r$, and $1 \leq j \leq k_i$, $x_{i,j}P(x,y)$ and $y_{i,j}P(x,y)$ are BCH polynomials of degree $s+i$. If $P(x,y)$ is a BCH polynomial of degree $s$, then so is $P(x,\delta_t(y))$ for any $t \in \R_{>0}$.
\end{remark}

\begin{example}
Let $M(x,y) = 6x_{1,6}x_{1,1}^2y_{4,3}, P(x,y) = -y_{1,2}(x_{1,1}y_{1,2}-x_{1,2}y_{1,1})$, $Q(x,y) = x_{1,1}y_{1,1}$, and $R(x,y) = y_{3,2}$. $M$ is a graded-homogeneous monomial of degree $7$ with $I(M) = \{1,1,1,4\}$, $P$ is a graded homogeneous polynomial of degree $3$ of $\tau$-type, $Q$ is a graded-homogeneous monomial of degree $2$ with $I(Q) = \{1,1\}$, and $R$ is a graded homogeneous monomial of degree $3$ with $I(r) = \{3\}$. $M$ and $P$ are BCH polynomials, but $Q$ and $R$ are not because they are monomials with $\|I(Q)\|_\infty = 1$ and $\|I(R)\|_\infty = \|I(R)\|_1$.
\end{example}

We now arrive at a key structural lemma for the group product on graded nilpotent Lie algebras. The rest of this subsection is dedicated to its proof.

\begin{lemma} \label{lem:groupstruct}
For all $x,y \in \g$ and $2 \leq s \leq r$, \\ \\
$\begin{array}{llll}
(1) & (y^{-1}x)_1 &=& x_1-y_1 \\
(2) & (y^{-1}x)_s &=& x_s-y_s +  \sum_{j=1}^{k_s}P_{s,j}(x,y)U_{s,j}
\end{array}$ \\ \\
where each $P_{s,j}$ is a BCH polynomial of degree $s$.
\end{lemma}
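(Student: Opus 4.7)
The plan is to expand $y^{-1}x = (-y)x$ using the BCH formula \eqref{eq:BCH} (with $x$ replaced by $-y$ and $y$ by $x$) and isolate contributions to each graded layer. The first-order part of the expansion is $(-y)+x = x-y$, and every other term is an iterated Lie bracket in the inputs $-y$ and $x$ involving at least two arguments. Decomposing $x = \sum_a x_a$ and $-y = -\sum_a y_a$ into graded components and using multilinearity of the bracket, each such iterated bracket is a finite sum of brackets $B_\alpha$ whose inputs are fixed graded components $x_{a_i}$ or $-y_{a_i}$. Since $[\g_a,\g_b] \sbs \g_{a+b}$, the bracket $B_\alpha$ lies in $\g_s$ exactly when the input grades satisfy $\sum_i a_i = s$. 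For $s=1$ no bracket contributes at all (brackets need at least two arguments, so $\sum_i a_i \geq 2$), and $(y^{-1}x)_1 = x_1 - y_1$ follows immediately. For $s \geq 2$ it remains to show that expanding the sum of all $B_\alpha$ of total grade $s$ in the basis $U_{s,1},\ldots,U_{s,k_s}$ produces coefficients that are BCH polynomials of degree $s$.

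Fix such a $B_\alpha$ with input grades $a_1,\ldots,a_k$, so $k \geq 2$ and $\sum_i a_i = s$. By multilinearity, every monomial $M$ appearing when we write out $B_\alpha$ in the coordinates $x_{i,j},y_{i,j}$ has multiset $I(M) = \{a_1,\ldots,a_k\}$. If some $a_j > 1$, then $\|I(M)\|_\infty \geq 2$, and since $k \geq 2$ forces $\max_i a_i \leq s-1$, we also have $\|I(M)\|_\infty < s$. Every monomial arising in this case therefore satisfies $1 < \|I(M)\|_\infty < s$, which is precisely the non-$\tau$-type form permitted in a BCH polynomial.

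In the remaining case all $a_i = 1$, so $B_\alpha$ is a right-nested iterated bracket of inputs drawn from $\{x_1,-y_1\}$. The innermost bracket is either $[x_1,x_1]=[y_1,y_1]=0$ or $\pm [x_1,y_1] = \pm \sum_{n<m}\tau_{n,m}(x,y)[U_{1,n},U_{1,m}]$, using antisymmetry and bilinearity of the Lie bracket. Since $\tau_{n,m}(x,y)$ is a scalar, each successive outer $\ad$-application factors as $[z,\tau_{n,m}(x,y)v] = \tau_{n,m}(x,y)[z,v]$, and expanding $z = \sum_p c_p U_{1,p}$ with $c_p \in \{x_{1,p},-y_{1,p}\}$ introduces exactly one additional first-layer coordinate. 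Iterating $s-2$ more times, $B_\alpha$ becomes a sum of terms $\tau_{n,m}(x,y) \cdot c_{p_1} \cdots c_{p_{s-2}} \cdot w$ with $w$ a basis vector of $\g_s$, so the coefficient of each $U_{s,j}$ is a sum of $\tau$-type monomials of degree $s$. Combining the two cases proves (2). The main obstacle is the multilinear bookkeeping in this last case: one must verify that the $\tau_{n,m}$ factor produced by the innermost bracket is preserved and that no further antisymmetry-type reductions occur when the outer brackets are applied, which reduces to the observation that each outer $\ad$ contributes exactly one new first-layer coordinate to $M'$.
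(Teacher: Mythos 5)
Your proof is correct, and it takes a slightly different route from the paper's. The paper first establishes an auxiliary structural lemma (Lemma~\ref{lem:bracketstruct}) about the $\g_{i_2}$-components of \emph{all} right-nested brackets $(x,y)^\eps$, proved by induction on the bracket length $i_1$ (peeling off one $\ad$ at a time and using Remark~\ref{rmk:sametype} to absorb the new coordinate into the BCH polynomial), and then deduces the statement by plugging into the BCH expansion of $y^{-1}x$. You instead skip the intermediate lemma: you expand each bracket term in the BCH series fully by graded components in a single step, obtaining brackets $B_\alpha$ of fixed input grades $(a_1,\ldots,a_k)$ with $k \geq 2$ and $\sum a_i = s$, and then do a clean dichotomy on whether some $a_j > 1$ (giving monomials with $1 < \|I(M)\|_\infty < s$, since $k\geq 2$ forces $\max a_i \leq s-1$) or all $a_i = 1$ (where the innermost bracket produces a $\tau_{n,m}$ factor that, being scalar, commutes past all subsequent $\ad$'s, and the outer $\ad$'s each contribute one first-layer coordinate to $M'$). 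Both proofs rest on the same two observations about the two types of admissible BCH monomials; the paper's version is more modular and reusable, yours is shorter and avoids the separate induction on bracket length. Your closing ``main obstacle'' remark is not actually an obstacle: once you note that $\tau_{n,m}(x,y)$ is a scalar and the bracket is bilinear, the bookkeeping is immediate and there is nothing left to verify.
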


A trusting reader familiar with the group structure of graded nilpotent Lie algebras induced by the Baker-Campbell-Hausdorff formula may safely skip the rest of this subsection. Before proving the lemma, we need to set some useful notation that allows us to work with nested Lie brackets, and then prove a lemma about these brackets.

\begin{definition}
Given $x,y \in \g$, $i \geq 1$, and $\eps \in \{1,2\}^i$, we recursively define $(x,y)^\eps$ as follows: for $i = 1$, $(x,y)^\eps := x$ if $\eps = 1$ and $(x,y)^\eps := y$ if $\eps = 2$. Assume $(x,y)^\eps$ has been defined for all $\eps \in \{1,2\}^i$ for some $i \geq 1$. Let $\eps \in \{1,2\}^{i+1}$. Then $\eps$ equals $(1,\eps')$ or $(2,\eps')$ for some $\eps' \in \{1,2\}^i$. We define $(x,y)^\eps := [x,(x,y)^{\eps'}]$ if $\eps = (1,\eps')$ and $(x,y)^\eps := [y,(x,y)^{\eps'}]$ if $\eps = (2,\eps')$.
\end{definition}

\begin{example}
$(x,y)^{(1,2,2,1)} = [x,[y,[y,x]]]$. The 1 or 2 in the superscript should be thought of as indicating the first or second component of $(x,y)$ in the nested Lie bracket.
\end{example}

\begin{lemma} \label{lem:bracketstruct}
For all $x,y \in \g$, $2 \leq i_1,i_2 \leq r$, and $\eps \in \{1,2\}^{i_1}$,
$$((x,y)^\eps)_{i_2} = \sum_{j=1}^{k_{i_2}}Q_{i_2,j}(x,y) U_{i_2,j}$$
where each $Q_{i_2,j}$ is a BCH polynomial of degree $i_2$ if $i_1 \leq i_2$ 0 if $i_1 > i_2$.
\end{lemma}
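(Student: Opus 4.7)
The plan is to induct on $i_1$, with $i_2 \in \{2, \ldots, r\}$ treated as a parameter that is free in every step. The vanishing claim when $i_1 > i_2$ will drop out for free: expanding $(x,y)^\eps$ by multilinearity along the layer decomposition of $x,y$ writes it as a sum of nested brackets of components $(\text{$x$ or $y$})_{a_k}$ with each $a_k \geq 1$, each of which lies in $\g_{a_1 + \cdots + a_{i_1}}$, hence in a layer of degree $\geq i_1$.

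For the base case $i_1 = 2$, the brackets $[x,x]$ and $[y,y]$ vanish and $[y,x] = -[x,y]$, so only $[x,y]$ needs attention. Expanding bilinearly,
$$([x,y])_{i_2} = \sum_{\substack{a+b=i_2 \\ a,b \geq 1}} \sum_{n,m} x_{a,n}\, y_{b,m}\, [U_{a,n}, U_{b,m}].$$
When $i_2 = 2$ this forces $a = b = 1$, and pairing the $(n,m)$ and $(m,n)$ contributions using antisymmetry of the Lie bracket collapses the sum into $\sum_{n < m} \tau_{n,m}(x,y)\,[U_{1,n}, U_{1,m}]$. Expanding each $[U_{1,n}, U_{1,m}]$ in the $U_{2,\cdot}$ basis produces a sum of $\tau$-type polynomials, each with trivial monomial factor $M' = 1$ (which indeed has $I(M') = \ones_0 = \emptyset$). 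When $i_2 > 2$, each monomial $x_{a,n} y_{b,m}$ is graded-homogeneous of degree $i_2$ with multiset $\{a,b\}$, and since $a+b = i_2$ with $a,b \geq 1$ we have $1 < \max(a,b) \leq i_2 - 1 < i_2$, which matches exactly the condition for a nonvanishing BCH monomial.

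For the inductive step, I would write $\eps = (\sigma, \eps')$ with $\sigma \in \{1,2\}$ and $\eps' \in \{1,2\}^{i_1}$, and set $z = x$ if $\sigma = 1$, $z = y$ otherwise, so $(x,y)^\eps = [z,(x,y)^{\eps'}]$. The inductive hypothesis combined with the vanishing observation gives
$$(x,y)^{\eps'} = \sum_{i_2 = i_1}^{r}\sum_{j=1}^{k_{i_2}} Q^{(i_2)}_j(x,y)\, U_{i_2,j},$$
with each $Q^{(i_2)}_j$ a BCH polynomial of degree $i_2$. Expanding $z = \sum_a z_a$ and the outer bracket gives
$$\bigl((x,y)^\eps\bigr)_{i'_2} = \sum_{a + i_2 = i'_2} \sum_{j,m} z_{a,m}\, Q^{(i_2)}_j(x,y)\, [U_{a,m}, U_{i_2,j}],$$
and rewriting each $[U_{a,m}, U_{i_2,j}]$ in the $U_{i'_2,\cdot}$ basis turns the coefficients into finite sums of terms $z_{a,m}\, Q^{(i_2)}_j(x,y)$. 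Since $i_2 \geq i_1$ in the sum, we automatically get $i'_2 \geq i_1 + 1$; and by Remark \ref{rmk:sametype} each such $z_{a,m} Q^{(i_2)}_j(x,y)$ is a BCH polynomial of degree $a + i_2 = i'_2$, so the induction closes.

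The only real obstacle will be the base case at $i_2 = 2$, where the specific $\tau$-type shape has to be extracted by the antisymmetrization above; everything else reduces to tracking multiset degrees and a direct invocation of Remark \ref{rmk:sametype}.
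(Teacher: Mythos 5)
Your proposal is correct and follows essentially the same route as the paper: induction on $i_1$ with the vanishing case $i_1 > i_2$ handled by the grading, the base case split into $i_2 = 2$ (antisymmetrization producing $\tau$-type terms with trivial companion monomial $M'=1$) versus $i_2 > 2$ (multiset degree bookkeeping forcing $1 < \|I\|_\infty < i_2$), and the inductive step peeling off one bracket and invoking the closure of BCH polynomials under multiplication by a single coordinate variable. The paper's inductive step sums over $n$ with $[x_n,((x,y)^{\eps'})_{i_2-n}]$ where you parametrize by $a+i_2 = i_2'$, but these are the same computation; and while the paper doesn't explicitly cite Remark~\ref{rmk:sametype} in that step, it uses precisely that fact, so your explicit appeal is just cleaner bookkeeping, not a different idea.
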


\begin{proof}
Let $x,y \in \g$. By the grading property, $((x,y)^{\eps})_{i_2} = 0$ if $\eps \in \{1,2\}^{i_1}$ and $i_1 > i_2$. We'll prove the remaining case by induction on $i_1$.

\underline{Proof of base case}. The base case is $i_1=2$. Let $\eps \in \{1,2\}^2$. Then $\eps$ equals $(1,1)$, $(1,2)$, $(2,1)$, or $(2,2)$. Since $(x,y)^{(1,1)} = (x,y)^{(2,2)} = 0$ and $(x,y)^{(2,1)} = -(x,y)^{(1,2)}$, it suffices to only consider $\eps = (1,2)$, in which case $(x,y)^\eps = [x,y]$. Let $i_2 \geq 2$. We treat the two cases $i_2 = 2$ and $i_2 > 2$. First assume $i_2 = 2$. Then we have
$$[x,y]_2 = [x_1,y_1] = \left[\sum_{j=1}^{k_1} x_{1,j} U_{1,j}, \sum_{j'=1}^{k_1} y_{1,j'} U_{1,j'}\right] = \sum_{j=1}^{k_1} \sum_{j'=1}^{k_{1}} x_{1,j}y_{1,j'}\left[U_{1,j},U_{1,j'}\right]$$
$$= \frac{1}{2} \left(\sum_{n,m=1}^{k_1} x_{1,n}y_{1,m}\left[U_{1,n},U_{1,m}\right] + \sum_{n,m=1}^{k_1} x_{1,m}y_{1,n}\left[U_{1,m},U_{1,n}\right]\right)$$
$$= \frac{1}{2}\sum_{n,m=1}^{k_1} (x_{1,n}y_{1,m}-x_{1,m}y_{1,n})\left[U_{1,n},U_{1,m}\right] = \frac{1}{2}\sum_{n,m=1}^{k_1} \tau_{n,m}(x,y)\left[U_{1,n},U_{1,m}\right]$$
$$= \frac{1}{2}\sum_{n,m=1}^{k_1} \tau_{n,m}(x,y) \sum_{j=1}^{k_2} c_{j,n,m}U_{2,j} = \sum_{j=1}^{k_2}\left(\sum_{n,m=1}^{k_1} \frac{c_{j,n,m}}{2}\tau_{n,m}(x,y)\right)U_{2,j}$$
for some $c_{j,n,m} \in \R$. The inner sum is a sum of polynomials of degree 2 of $\tau$-type, and thus a BCH polynomial of degree $i_2$.

Now we consider the case $i_2 > 2$.
$$[x,y]_{i_2} = \sum_{n=1}^{i_2-1}[x_n,y_{i_2-n}] = \sum_{n=1}^{i_2-1} \left[\sum_{j=1}^{k_n} x_{n,j} U_{n,j}, \sum_{j'=1}^{k_{i_2-n}} y_{i_2-n,j'} U_{i_2-n,j'}\right]$$
$$= \sum_{n=1}^{i_2-1} \sum_{j=1}^{k_n} \sum_{j'=1}^{k_{i_2-n}} x_{n,j}y_{i_2-n,j'}  \left[U_{n,j},U_{i_2-n,j'}\right] = \sum_{n=1}^{i_2-1} \sum_{j=1}^{k_n} \sum_{j'=1}^{k_{i_2-n}} x_{n,j}y_{i_2-n,j'} \sum_{m=1}^{k_{i_2}} c_{m,n,j,j'}U_{i_2,m}$$
$$= \sum_{m=1}^{k_{i_2}}\left(\sum_{n=1}^{i_2-1} \sum_{j=1}^{k_n} \sum_{j'=1}^{k_{i_2-n}} c_{m,n,j,j'}x_{n,j}y_{i_2-n,j'}\right)U_{i_2,m}$$
for some $c_{m,n,j,j'} \in \R$. Notice that, for each $n,j,j'$, $I(x_{n,j}y_{i_2-n,j'}) = \{n,i_2-n\}$, and so since $i_2 > 2$ and $1 \leq n \leq i_2-1$, $1 < \|I(x_{n,j}y_{i_2-n,j'})\|_\infty < i_2$, and thus $x_{n,j},y_{i_2-n,j'}$ is a BCH polynomial of degree $i_2$. This completes the proof of the base case.

\underline{Proof of inductive step}. Now assume the lemma holds for some $2 \leq i_1 < r$. Let $\eps \in \{1,2\}^{i_1+1}$. Then $\eps$ equals $(1,\eps')$ or $(2,\eps')$ for some $\eps' \in \{1,2\}^{i_1}$. Without loss of generality, assume $\eps = (1,\eps')$. Let $i_2 \geq i_1 + 1$.
Then
$$((x,y)^{\eps})_{i_2} = [x,(x,y)^{\eps'}]_{i_2} = \sum_{n=1}^{i_2-1}[x_n,((x,y)^{\eps'})_{i_2-n}]$$
$$\overset{\text{ind hyp}}{=} \sum_{n=1}^{i_2-1} \left[\sum_{j=1}^{k_n} x_{n,j} U_{n,j}, \sum_{j'=1}^{k_{i_2-n}}P_{i_2-n,j'}(x,y)U_{i_2-n,j'}\right]$$
$$= \sum_{n=1}^{i_2-1} \sum_{j=1}^{k_n} \sum_{j'=1}^{k_{i_2-n}} x_{n,j}P_{i_2-n,j'}(x,y) \left[U_{n,j},U_{i_2-n,j'}\right]$$
$$= \sum_{n=1}^{i_2-1} \sum_{j=1}^{k_n} \sum_{j'=1}^{k_{i_2-n}} x_{n,j}P_{i_2-n,j'}(x,y) \sum_{m=1}^{k_{i_2}}c_{m,n,j,j'}U_{i_2,m}$$
$$= \sum_{m=1}^{k_{i_2}} \left(\sum_{n=1}^{i_2} \sum_{j=1}^{k_n} \sum_{j'=1}^{k_{i_2-n}} c_{m,n,j,j'}x_{n,j}P_{i_2-n,j'}(x,y)\right)U_{i_2,m}$$
for some $c_{m,n,j,j'} \in \R$ and BCH polynomials $P_{i_2-n,j',\ell}$ of degree $i_2-n$. This implies $x_{n,j}P_{i_2-n,j'}(x,y)$ is a BCH polynomial of degree $i_2$, as desired.
\end{proof}

\begin{proof}[Proof of Lemma \ref{lem:groupstruct}]
The Baker-Campbell-Hausdorff formula, \eqref{eq:BCH}, implies that there are constants (many can be taken to be 0) $\{\alpha_{\eps}\}_{\eps \in \cup_{i=2}^r\{1,2\}^i} \sbs \R$ such that
$$y^{-1}x = x - y + \sum_{i=2}^r \sum_{\eps \in \{1,2\}^i} \alpha_\eps(x,y)^\eps$$
Since 
$$(y^{-1}x)_i = x_i - y_i + \sum_{i=2}^r \sum_{\eps \in \{1,2\}^i} \alpha_\eps((x,y)^\eps)_i$$
the desired conclusion follows by appealing to Lemma \ref{lem:bracketstruct}. 
\end{proof}

\subsection{Convex Metrics} \label{ss:upperbound}
The goal of this subsection is to prove Theorem \ref{thm:upperboundmain}. To do so, we construct a left invariant homogeneous quasi-metric on $\g$ that satisfies a certain 4-point inequality. This is the content of Lemma \ref{lem:convmetric}. All the lemmas and definitions preceding Lemma \ref{lem:convmetric} exist to prove it.

We next define a graded-homogeneous polynomial of degree $2s$ that dominates the square of any BCH polynomial of degree $s$, Lemma \ref{lem:domBCH1}. As a consequence of this we get two domination inequalities involving norms of group products, Lemmas \ref{lem:dompoly2} and \ref{lem:dompoly3}. These types of domination are what will ultimately allow us to prove Lemma \ref{lem:convnorm1}, the key lemma used in the proof of Lemma \ref{lem:convmetric}.

\begin{definition} \label{def:Ddef}
Let
$$\boldsymbol{\tau}(x,y) := \sqrt{\sum_{n,m \leq k_1} \tau^2_{n,m}(x,y)}$$
so that $\boldsymbol{\tau}(x,y)^2 \geq \tau_{n,m}(x,y)^2$ for every $n$ and $m$. For each $2 \leq s \leq r$, define $D_s: \g \cross \g \to \R_{\geq 0}$ recursively by
$$D_2(x,y) := \boldsymbol{\tau}^2(x,y) + \|(x_2,y_2)\|^2$$
$$D_{s+1} := \|(x_{s+1},y_{s+1})\|^2 + \sum_{s'=1}^{\lfloor (s+1)/2 \rfloor} \|(x_{s'},y_{s'})\|^2D_{s+1-s'}(x,y)$$
\end{definition}

\begin{lemma} \label{lem:domBCH1}
For any $2 \leq s \leq r$ and BCH polynomial $P$ of degree $s$, there exists $0 < c \leq 1$ such that for all $x,y \in \g$,
$$D_s(x,y) - \|(x_s,y_s)\|^2 \geq cP^2(x,y)$$
\end{lemma}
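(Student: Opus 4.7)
The plan is strong induction on $s$. First I would reduce to the case where $P$ is a single summand, either a $\tau$-type polynomial $Q = \tau_{n,m}(x,y)M'(x,y)$ with $M'$ a graded-homogeneous monomial satisfying $I(M') = \ones_{s-2}$, or a graded-homogeneous monomial $Q$ of degree $s$ with $1 < \|I(Q)\|_\infty < s$. This reduction uses only $(\sum_j Q_j)^2 \leq N\sum_j Q_j^2$, where $N$ is the finite number of summands in $P$. The base case $s = 2$ is then immediate: $D_2(x,y) - \|(x_2,y_2)\|^2 = \boldsymbol{\tau}^2(x,y) \geq \tau_{n,m}^2(x,y)$, and no monomials with $1 < \|I(\cdot)\|_\infty < 2$ exist at degree $2$.

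The workhorse for the inductive step will be the inequality $D_j \geq \|(x_1,y_1)\|^2 D_{j-1}$, which is the $s'=1$ term in the recursive definition of $D_j$ and is valid for every $j \geq 2$. Iterating this all the way down to $D_2$ yields $D_j \geq \|(x_1,y_1)\|^{2(j-2)} D_2 \geq \|(x_1,y_1)\|^{2(j-2)} \boldsymbol{\tau}^2$. The $\tau$-type case is then handled in one stroke: $Q^2 \leq \boldsymbol{\tau}^2 \|(x_1,y_1)\|^{2(s-2)}$, and $\|(x_1,y_1)\|^2 D_{s-1}$ is present as a summand of $D_s - \|(x_s,y_s)\|^2$, so iterating from $D_{s-1}$ down to $D_2$ produces the required bound.

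For a monomial $Q = v_1 v_2 \cdots v_k$ of degree $s$ with $1 < \|I(Q)\|_\infty < s$, I would order the factors so that their layer indices satisfy $i_1 \leq i_2 \leq \cdots \leq i_k$. Since $\|I(Q)\|_\infty < s$ forces $k \geq 2$, pigeonhole gives $i_1 \leq s/k \leq s/2$, hence $i_1 \leq \lfloor s/2 \rfloor$. Thus the term $\|(x_{i_1},y_{i_1})\|^2 D_{s-i_1}$ sits inside $D_s - \|(x_s,y_s)\|^2$. Using $v_1^2 \leq \|(x_{i_1},y_{i_1})\|^2$, the problem reduces to showing $D_{s-i_1} \gtrsim (Q/v_1)^2$. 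The quotient $Q/v_1$ is a monomial of degree $s-i_1$ with $\|I(Q/v_1)\|_\infty = i_k$. If $i_k = s - i_1$, then $Q/v_1 = v_k$ alone and $(Q/v_1)^2 \leq \|(x_{s-i_1},y_{s-i_1})\|^2$, which is the top-layer term of $D_{s-i_1}$. Otherwise $i_k < s - i_1$, and $Q/v_1$ is itself a BCH monomial of degree $s - i_1 < s$, so the induction hypothesis applies.

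The main bookkeeping obstacle will be verifying in the subcase analysis that $\|I(Q/v_1)\|_\infty \geq 2$ genuinely holds, so that $Q/v_1$ qualifies as a BCH monomial whenever $i_k < s - i_1$. This is trivial if $i_1 \geq 2$, since all remaining $v_\ell$ are non-layer-$1$. When $i_1 = 1$ it follows from $\|I(Q/v_1)\|_\infty = \|I(Q)\|_\infty \geq 2$, because removing a layer-$1$ factor cannot decrease the maximum layer index. The remaining verifications (that $s - i_1 \geq 2$ in the inductive range, and that the constant $c$ can be shrunk to lie in $(0,1]$) are routine.
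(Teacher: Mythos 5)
Your proof is correct and follows essentially the same strategy as the paper: strong induction on $s$, reduction to a single summand, and for a monomial $Q$, peeling off one low-layer factor $v_1$, bounding $v_1^2 \leq \|(x_{i_1},y_{i_1})\|^2$, pairing with the $\|(x_{i_1},y_{i_1})\|^2 D_{s-i_1}$ summand of $D_s - \|(x_s,y_s)\|^2$, and invoking the inductive hypothesis on the quotient. The paper organizes the case split around whether $1 \in I(P)$ rather than always peeling the minimal-layer factor, and for $\tau$-type polynomials it peels one layer-one factor per step and reapplies the inductive hypothesis, whereas you iterate $D_j \geq \|(x_1,y_1)\|^2 D_{j-1}$ all the way to $D_2$ in one pass; both are fine.

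One place where your version is actually tighter than the paper's: you explicitly separate the subcase $i_k = s - i_1$, in which $Q/v_1$ is a single variable and hence not a BCH monomial, and handle it via the top-layer term $\|(x_{s-i_1},y_{s-i_1})\|^2$ of $D_{s-i_1}$. The paper only records that the quotient $M$ satisfies $1 < \|I(M)\|_\infty < s+1$, which is weaker than the condition $\|I(M)\|_\infty < \|I(M)\|_1$ required for $M$ to be a BCH monomial of its own degree, and then applies the inductive hypothesis to $M$ without flagging this edge case. Your check that removing a layer-one factor leaves $\|I(Q/v_1)\|_\infty = \|I(Q)\|_\infty \geq 2$ is likewise exactly the detail needed to keep the quotient a legitimate BCH monomial when $i_1 = 1$.
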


\begin{proof}
The proof is by induction on $s$. The base case $s=2$ is clear from the definition of $D_2$ and BCH polynomial of degree $2$. Assume the inequality holds for all $s_0 \leq s$ for some $s < r$. Let $P$ be a BCH polynomial of degree $s+1$. By definition of BCH polynomial, it suffices to prove the inequality assuming $P$ is a monomial with $1 < \|I(P)\|_\infty < s+1$ or $P$ is of $\tau$-type. First assume $P$ is a monomial with $1 < \|I(P)\|_\infty < s+1$. There are two subcases to consider: $1 \in I(P)$ and $1 \notin I(P)$. Assume the first subcase holds. Then $P = x_{1,n}M(x,y)$ or $P = y_{1,n}M(x,y)$ for some $n \leq k_1$ and monomial $M$ of degree $s$ with $1 < \|I(M)\|_\infty < s+1$. Then
$$D_{s+1}(x,y) - \|(x_{s+1},y_{s+1})\|^2 = \sum_{s'=1}^{\lfloor (s+1)/2 \rfloor} \|(x_{s'},y_{s'})\|^2D_{s+1-s'}(x,y)$$
$$\geq \|(x_1,y_1)\|^2D_{s}(x,y) \overset{\text{ind hyp}}{\geq} c\|(x_1,y_1)\|^2M^2(x,y) \geq cP^2(x,y)$$
Now assume the second subcase holds. Then $P(x,y) = x_{i,j}M(x,y)$ or $P(x,y) = y_{i,j}M(x,y)$ for some $1 < i \leq \lfloor (s+1)/2 \rfloor$, $j \leq k_i$, and monomial $M$ of degree $s+1-i$ with $1 < \|I(M)\|_\infty < s+1$. Then
$$D_{s+1}(x,y) - \|(x_{s+1},y_{s+1})\|^2 = \sum_{s'=1}^{\lfloor (s+1)/2 \rfloor} \|(x_{s'},y_{s'})\|^2D_{s+1-s'}(x,y)$$
$$\geq \|(x_i,y_i)\|^2D_{s+1-i}(x,y) \overset{\text{ind hyp}}{\geq} c\|(x_i,y_i)\|^2M^2(x,y) \geq cP^2(x,y)$$

Now assume $P$ is of $\tau$-type. By definition, since $P$ has degree $s+1$, this means $P(x,y) = \tau_{n,m}(x,y)M'(x,y)$ for some $n,m \leq k_1$ and graded-homogeneous monomial $M'$ with $I(M') = \ones_{s-1}$. This implies $P(x,y) = x_{1,\ell}P'(x,y)$ or $P(x,y) = y_{1,\ell}P'(x,y)$ for some $\ell \leq k_1$, and degree $s$ polynomial $P'$ of $\tau$-type. Then
$$D_{s+1}(x,y) - \|(x_{s+1},y_{s+1})\|^2 = \sum_{s'=1}^{\lfloor (s+1)/2 \rfloor} \|(x_{s'},y_{s'})\|^2D_{s+1-s'}(x,y)$$
$$\geq \|(x_1,y_1)\|^2D_{s}(x,y) \overset{\text{ind hyp}}{\geq} c\|(x_1,y_1)\|^2(P')^2(x,y) \geq cP^2(x,y)$$
\end{proof}

\begin{lemma} \label{lem:dompoly1}
Let $2 \leq s \leq r$. For any $t > 0$, there is a constant $c > 0$ such that for all $x,y \in \g$,
$$D_s(x,y) - \|(x_s,y_s)\|^2 \geq c\|(\delta_t(y)^{-1}x)_{s} - (x_{s}-t^{s}y_{s})\|^2$$
\end{lemma}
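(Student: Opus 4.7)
The plan is to reduce the lemma to Lemma \ref{lem:domBCH1} by identifying the right-hand side as a sum of squares of BCH polynomials.

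First I would apply Lemma \ref{lem:groupstruct} with $y$ replaced by $\delta_t(y)$, obtaining
\[
(\delta_t(y)^{-1}x)_s \;=\; x_s - t^s y_s + \sum_{j=1}^{k_s} P_{s,j}(x,\delta_t(y))\,U_{s,j},
\]
where each $P_{s,j}$ is a BCH polynomial of degree $s$. Since the $U_{s,j}$ are orthonormal, subtracting $x_s - t^s y_s$ and taking the squared norm yields
\[
\|(\delta_t(y)^{-1}x)_{s} - (x_{s}-t^{s}y_{s})\|^2 \;=\; \sum_{j=1}^{k_s} P_{s,j}(x,\delta_t(y))^2.
\]

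Next I would invoke the second clause of Remark \ref{rmk:sametype}: for fixed $t$, each function $(x,y) \mapsto P_{s,j}(x,\delta_t(y))$ is itself a BCH polynomial of degree $s$ in the pair $(x,y)$. Applying Lemma \ref{lem:domBCH1} to each of these finitely many polynomials produces constants $c_{s,j}(t) \in (0,1]$ such that
\[
D_s(x,y) - \|(x_s,y_s)\|^2 \;\geq\; c_{s,j}(t)\, P_{s,j}(x,\delta_t(y))^2 \qquad \text{for all } x,y \in \g.
\]

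Finally, setting $c := \frac{1}{k_s}\min_{1 \leq j \leq k_s} c_{s,j}(t) > 0$, summing the inequalities above over $j$, and comparing with the identity for $\|(\delta_t(y)^{-1}x)_s - (x_s - t^s y_s)\|^2$ derived in the first step yields the desired estimate. No step here is a genuine obstacle; the only point that requires care is confirming that $P_{s,j}(x,\delta_t(y))$ remains a BCH polynomial in $(x,y)$ so that Lemma \ref{lem:domBCH1} applies (this is exactly the content of the last sentence of Remark \ref{rmk:sametype}), and noting that $c$ is allowed to depend on $t$ as the statement permits.
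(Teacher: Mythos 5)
Your proof is correct and follows essentially the same route as the paper's: apply Lemma \ref{lem:groupstruct} with $y$ replaced by $\delta_t(y)$, use Remark \ref{rmk:sametype} to see that $P_{s,j}(x,\delta_t(y))$ is still a BCH polynomial of degree $s$ in $(x,y)$, and then invoke Lemma \ref{lem:domBCH1}. The paper leaves the final constant implicit, whereas you spell out taking $c = k_s^{-1}\min_j c_{s,j}(t)$, but this is exactly the step the paper is silently performing.
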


\begin{proof}
Let $t > 0$. By Lemma \ref{lem:groupstruct},
$$\|(\delta_t(y)^{-1}x)_s - (x_s-t^sy_s)\|^2 \overset{\text{Lem } \ref{lem:groupstruct}}{=} \sum_j |P_{s,j}(x,\delta_t(y))|^2 = \sum_j |P'_{s,j,t}(x,y)|^2$$
where each $P_{s,j}$ is a BCH polynomial of degree $s$, and by Remark \ref{rmk:sametype}, each $P'_{s,j,t}$ is a BCH polynomial of degree $s$. Then the desired inequality follows from Lemma \ref{lem:domBCH1}.
\end{proof}

\begin{lemma} \label{lem:dompoly2}
Let $2 \leq s \leq r$ and $c > 0$. For all sufficiently small $\lambda > 0$ (depending on $c$), for all $x,y \in \g$,
$$c(D_s(x,y)-\|(x_s,y_s)\|^2) + \lambda\|(y^{-1}x)_{s}\|^2 \geq \frac{\lambda}{2}\|x_s-y_s\|^2$$
\end{lemma}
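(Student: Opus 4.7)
The plan is to combine Lemma \ref{lem:groupstruct}, which decomposes $(y^{-1}x)_s$ into its ``linear part'' $x_s-y_s$ plus a correction built from BCH polynomials of degree $s$, with Lemma \ref{lem:domBCH1}, which controls the squares of those BCH polynomials by $D_s(x,y)-\|(x_s,y_s)\|^2$.

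First I would write, using Lemma \ref{lem:groupstruct}(2),
$$(y^{-1}x)_s = (x_s-y_s) + w, \qquad w := \sum_{j=1}^{k_s} P_{s,j}(x,y)\,U_{s,j},$$
where each $P_{s,j}$ is a BCH polynomial of degree $s$. Since the $U_{s,j}$ are orthonormal, $\|w\|^2=\sum_j P_{s,j}^2(x,y)$. Applying Lemma \ref{lem:domBCH1} to each $P_{s,j}$ and summing gives a constant $C=C(s,\g)<\infty$ (a sum of reciprocals of the constants produced by Lemma \ref{lem:domBCH1}) such that
$$\|w\|^2 \;\leq\; C\bigl(D_s(x,y)-\|(x_s,y_s)\|^2\bigr).$$

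Next I would apply the elementary inequality $\|v+w\|^2 \geq \tfrac{1}{2}\|v\|^2-\|w\|^2$, which follows from the parallelogram bound $\|v\|^2=\|(v+w)-w\|^2 \leq 2\|v+w\|^2+2\|w\|^2$. With $v=x_s-y_s$, this gives
$$\|(y^{-1}x)_s\|^2 \;\geq\; \tfrac{1}{2}\|x_s-y_s\|^2 - \|w\|^2.$$
Multiplying by $\lambda$ and combining with the bound on $\|w\|^2$ yields
$$c\bigl(D_s(x,y)-\|(x_s,y_s)\|^2\bigr) + \lambda\|(y^{-1}x)_s\|^2 \;\geq\; \tfrac{\lambda}{2}\|x_s-y_s\|^2 + (c - \lambda C)\bigl(D_s(x,y)-\|(x_s,y_s)\|^2\bigr).$$

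Finally, choosing $\lambda \leq c/C$ makes the last term nonnegative and the desired inequality follows. There is no real obstacle here; the work has been done in Lemma \ref{lem:domBCH1}, and the rest is a parallelogram-law absorption in which the constant $c$ buys us enough room to swallow the $\|w\|^2$ error provided $\lambda$ is chosen small.
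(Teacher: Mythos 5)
Your proof is correct and follows essentially the same route as the paper's: both use the BCH decomposition $(y^{-1}x)_s = (x_s-y_s) + w$ and the domination of $\|w\|^2$ by $D_s(x,y)-\|(x_s,y_s)\|^2$ (which is Lemma \ref{lem:dompoly1} with $t=1$, whose proof you have in effect inlined via Lemma \ref{lem:groupstruct} and Lemma \ref{lem:domBCH1}), followed by a parallelogram-law absorption with $\lambda$ chosen small. The only cosmetic difference is that the paper cites Lemma \ref{lem:dompoly1} directly and rearranges the parallelogram step as $\lambda\|w\|^2 + \lambda\|(y^{-1}x)_s\|^2 \geq \tfrac{\lambda}{2}\|x_s-y_s\|^2$, whereas you subtract $\|w\|^2$ from $\tfrac{1}{2}\|x_s-y_s\|^2$ and absorb it into the $c$-term; the two are the same estimate.
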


\begin{proof}
Let $\lambda > 0$. By Lemma \ref{lem:dompoly1}, there is a constant $c' > 0$ (independent of $x,y$) such that
$$c(D_s(x,y)-\|(x_s,y_s)\|^2) + \lambda\|(y^{-1}x)_{s}\|^2 \geq c'\|(y^{-1}x)_{s} - (x_{s}-y_{s})\|^2 + \lambda\|(y^{-1}x)_{s}\|^2 =: (*)$$
Thus, if $\lambda \leq c'$,
$$(*) \geq \lambda\|(y^{-1}x)_{s} - (x_{s}-y_{s})\|^2 + \lambda\|(y^{-1}x)_{s}\|^2 \geq \frac{\lambda}{2}\|x_{s}-y_{s}\|^2$$
where the last inequality follows from the parallelogram law.
\end{proof}

\begin{lemma} \label{lem:dompoly3}
Let $2 \leq s \leq r$. There is a constant $c > 0$ such that for all $x,y \in \g$,
$$D_{s}(x,y) \geq c\|(\delta_{1/2}(y)^{-1}x)_{s}\|^2$$
\end{lemma}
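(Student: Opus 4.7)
The plan is to mimic the strategy of Lemma \ref{lem:dompoly1}, which already treats a very similar quantity: expand $(\delta_{1/2}(y)^{-1}x)_s$ via the group-structure lemma, isolate the ``naive'' piece $x_s - y_s/2^s$ from the BCH-polynomial correction, and bound each piece by $D_s(x,y)$.

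First I would apply Lemma \ref{lem:groupstruct} with $y$ replaced by $\delta_{1/2}(y)$; since $(\delta_{1/2}(y))_s = y_s/2^s$, this gives
\[
(\delta_{1/2}(y)^{-1}x)_s = \left(x_s - \tfrac{1}{2^s}y_s\right) + \sum_{j=1}^{k_s} P_{s,j}(x,\delta_{1/2}(y))\,U_{s,j},
\]
where each $P_{s,j}$ is a BCH polynomial of degree $s$. By Remark \ref{rmk:sametype}, each $(x,y) \mapsto P_{s,j}(x,\delta_{1/2}(y))$ is again a BCH polynomial of degree $s$. Using orthonormality of the $U_{s,j}$ and the elementary inequality $\|a+b\|^2 \leq 2\|a\|^2 + 2\|b\|^2$, we obtain
\[
\|(\delta_{1/2}(y)^{-1}x)_s\|^2 \leq 2\|x_s - y_s/2^s\|^2 + 2\sum_{j=1}^{k_s} |P_{s,j}(x,\delta_{1/2}(y))|^2.
\]

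Next I would bound each term. For the first, $\|x_s - y_s/2^s\|^2 \leq 2\|x_s\|^2 + 2\|y_s/2^s\|^2 \leq 2\|(x_s,y_s)\|^2$. For the sum, applying Lemma \ref{lem:domBCH1} to each BCH polynomial $P_{s,j}(x,\delta_{1/2}(y))$ yields a uniform constant $c' > 0$ (depending only on $\g$ and $s$) such that $|P_{s,j}(x,\delta_{1/2}(y))|^2 \leq (c')^{-1}(D_s(x,y) - \|(x_s,y_s)\|^2)$ for every $j$.

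Finally, I would combine these using the observation that $D_s(x,y) \geq \|(x_s,y_s)\|^2$ for every $s \geq 2$, which is immediate from the recursive definition (the term $\|(x_s,y_s)\|^2$ appears explicitly in $D_s$ at each step). Summing up,
\[
\|(\delta_{1/2}(y)^{-1}x)_s\|^2 \leq 4\|(x_s,y_s)\|^2 + \tfrac{2k_s}{c'}\bigl(D_s(x,y) - \|(x_s,y_s)\|^2\bigr) \leq \left(4 + \tfrac{2k_s}{c'}\right)D_s(x,y),
\]
so one may take $c = (4 + 2k_s/c')^{-1}$. There is no real obstacle here: the work is entirely done by Lemma \ref{lem:groupstruct}, Remark \ref{rmk:sametype}, and Lemma \ref{lem:domBCH1}; the only point to notice is that $D_s$ dominates its own top-layer piece $\|(x_s,y_s)\|^2$, which lets us absorb the ``non-BCH'' contribution $\|x_s - y_s/2^s\|^2$ without any additional structural input.
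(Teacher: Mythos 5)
Your proof is correct and uses exactly the same structural ingredients as the paper: isolate the linear piece $x_s - 2^{-s}y_s$ from the BCH-polynomial correction via Lemma~\ref{lem:groupstruct} and Remark~\ref{rmk:sametype}, bound the correction by $D_s - \|(x_s,y_s)\|^2$ via Lemma~\ref{lem:domBCH1}, and absorb the linear piece by the trivial estimate $\|(x_s,y_s)\|^2 \leq D_s$. The only difference is cosmetic: the paper invokes Lemma~\ref{lem:dompoly1} (which already packages the first two steps) and then closes with a parallelogram-law inequality, whereas you inline the content of Lemma~\ref{lem:dompoly1} and close with the elementary $\|a+b\|^2 \leq 2\|a\|^2 + 2\|b\|^2$. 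Your route is arguably a bit more self-contained and your final estimate slightly cleaner, but you could save a few lines by citing Lemma~\ref{lem:dompoly1} directly, as the paper does, since it is stated in exactly the needed generality (with $t=1/2$).
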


\begin{proof}
By Lemma \ref{lem:dompoly1}, it suffices to show
$$\|(\delta_{1/2}(y)^{-1}x)_{s} - (x_{s}-2^{-s}y_{s})\|^2 + \|(x_{s},y_{s})\|^2 \geq \frac{1}{8}\|(\delta_{1/2}(y)^{-1}x)_{s}\|^2$$
Since
$$\|x_{s}-2^{-s}y_{s}\|^2 \leq 4\|(x_{s},y_{s})\|^2$$
it suffices to show
$$\|(\delta_{1/2}(y)^{-1}x)_{s} - (x_{s}-2^{-s}y_{s})\|^2 + \frac{1}{4}\|x_{s}-2^{-s}y_{s}\|^2 \geq \frac{1}{8}\|(\delta_{1/2}(y)^{-1}x)_{s}\|^2$$
This inequality is true by the parallelogram law.
\end{proof}

\begin{lemma} \label{lem:domtau}
There is a constant $c > 0$ such that for all $x,y \in \g$,
$$\|y_1\|\|x_1-y_1/2\| \geq c\boldsymbol{\tau}(x,y)$$
\end{lemma}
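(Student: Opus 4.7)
The plan is to reduce everything to the Lagrange/Gram identity. First I would observe the key algebraic invariance: for any $n,m \leq k_1$,
$$\tau_{n,m}(x - y/2,\, y) = (x_{1,n} - y_{1,n}/2)y_{1,m} - (x_{1,m} - y_{1,m}/2)y_{1,n} = x_{1,n}y_{1,m} - x_{1,m}y_{1,n} = \tau_{n,m}(x,y),$$
since the subtracted symmetric terms $(y_{1,n}/2)y_{1,m}$ and $(y_{1,m}/2)y_{1,n}$ cancel. Consequently $\boldsymbol{\tau}(x,y) = \boldsymbol{\tau}(x - y/2, y)$, so it suffices to prove the inequality after replacing $x_1$ by $x_1 - y_1/2$, i.e.\ to show that $\|u\|\|v\| \geq c\, \boldsymbol{\tau}(u, v)$ whenever $u, v \in \R^{k_1}$ are interpreted as horizontal layer elements.

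Next I would compute the sum defining $\boldsymbol{\tau}$ explicitly. Expanding $(u_n v_m - u_m v_n)^2$ and summing,
$$\sum_{n,m \leq k_1} (u_n v_m - u_m v_n)^2 = 2\|u\|^2\|v\|^2 - 2\langle u, v\rangle^2.$$
This is the standard Gram/Lagrange identity, and since $\langle u,v\rangle^2 \geq 0$ we may drop the negative term to obtain $\boldsymbol{\tau}(u,v)^2 \leq 2\|u\|^2 \|v\|^2$.

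Combining the two steps with $u = x_1 - y_1/2$ and $v = y_1$ yields
$$\boldsymbol{\tau}(x,y)^2 = \boldsymbol{\tau}(x - y/2, y)^2 \leq 2 \|x_1 - y_1/2\|^2 \|y_1\|^2,$$
giving the lemma with $c = 1/\sqrt{2}$. I do not anticipate any real obstacle here; the only non-obvious input is recognizing that $\tau_{n,m}$ is antisymmetric in its two arguments at the first-layer level, which forces invariance under the shift $x_1 \mapsto x_1 - y_1/2$ (or indeed under $x_1 \mapsto x_1 + \lambda y_1$ for any $\lambda \in \R$). Once that is noted, the remainder is just Cauchy–Schwarz.
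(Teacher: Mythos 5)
Your proof is correct. The underlying observation — that $\tau_{n,m}$ is insensitive to the shift $x_1 \mapsto x_1 - y_1/2$ — is the same one the paper exploits, but you package it differently. The paper works term-by-term: for each fixed pair $(n,m)$ it applies two-dimensional Cauchy--Schwarz to the vectors $(y_{1,m},-y_{1,n})$ and $(x_{1,n}-y_{1,n}/2,\, x_{1,m}-y_{1,m}/2)$, getting $\|y_1\|\|x_1-y_1/2\| \geq |\tau_{n,m}(x,y)|$ directly (with the shift-invariance absorbed into the last equality of that computation), and then sums these bounds, which costs a factor of $k_1$ in the constant. You instead state the shift-invariance explicitly, which is cleaner conceptually, and then bound all of $\boldsymbol\tau^2(u,v) = \sum_{n,m}(u_n v_m - u_m v_n)^2$ at once via the Lagrange identity $\sum_{n,m}(u_n v_m - u_m v_n)^2 = 2(\|u\|^2\|v\|^2 - \langle u,v\rangle^2) \leq 2\|u\|^2\|v\|^2$. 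This yields the dimension-free constant $c = 1/\sqrt{2}$, whereas the paper's route gives a constant degrading like $1/k_1$. Since the lemma only asserts the existence of \emph{some} $c > 0$ depending on $\g$, both are equally adequate for the paper's purposes, but your version is sharper and slightly more transparent.
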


\begin{proof}
It suffices to show, for each fixed $n,m \leq k_1$, $\|y_1\|\|x_1-y_1/2\| \geq |\tau_{n,m}(x,y)|$. By Cauchy-Schwarz,
$$\|y_1\|\|x_1-y_1/2\| \geq \|(y_{1,m},-y_{1,n})\|\|(x_{1,n},x_{1,m})-(y_{1,n},y_{1,m})/2\|$$
$$\overset{\text{C-S}}{\geq} |y_{1,m}(x_{1,n}-y_{1,n}/2) - y_{1,n}(x_{1,m}-y_{1,m}/2)| = |x_{1,n}y_{1,m} - x_{1,m}y_{1,n}| = |\tau_{n,m}(x,y)|$$
\end{proof}

\begin{definition} \label{def:SNdef}
For $2 \leq s \leq r$, define $SN_s: \g \cross \g \to \R$ by
$$SN_s(x,y) := \max\{\|x_1-y_1/2\|,\|(x_2,y_2)\|^{1/2},\|(x_3,y_3)\|^{1/3}, \dots \|(x_s,y_s)\|^{1/s}\}$$
\end{definition}

\begin{remark}
Using the maximum of the terms is not important here; it could be replaced by any $\ell^p$-sum or other such norm. If a different choice of norm was used, the rest of the section would proceed the exact same way except with possibly different values of constants (but still independent of $x,y$).
\end{remark}

\begin{lemma} \label{lem:convnorm1}
For each $2 \leq s \leq r$, there exists a homogeneous quasi-norm $N_s$ and a constant $c > 0$ such that for all $x,y \in \g$,
\begin{enumerate}
\item \label{convnorm1} $$(N_s(x)^{2s}+N_s(y^{-1}x)^{2s})/2 - (N_s(y)/2)^{2s} \geq cSN_s^{2s}(x,y) + cD_s(x,y) + cN_s(\delta_{1/2}(y)^{-1}x)^{2s}$$
\item \label{convnorm2} $N_s(y) \geq \|y_1\|$ for all $s \geq 2$. Consequently, $N_{s}(y) + SN_s(x,y) \geq b\|(x_{s'},y_{s'})\|^{1/s'}$ for some $b > 0$ and all $1 \leq s' \leq s$.
\item \label{convnorm3} If $N_s(y) = 0$, $y_i = 0$ for all $1 \leq i \leq s$. In particular, $N_r$ is a positive definite homogeneous quasi-norm.
\end{enumerate}
\end{lemma}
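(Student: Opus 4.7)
The plan is to prove items (1)--(3) jointly by strong induction on $s \in \{2,\ldots,r\}$, following the recursive formulas for $N_s$ sketched in Section~\ref{ss:upperbound} of the overview.

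\emph{Base case $s=2$.} I would set $N_2(x) := (\|x_1\|^4+\lambda_2\|x_2\|^2)^{1/4}$ for a small constant $\lambda_2>0$ to be fixed at the end. Expanding $N_2(x)^4+N_2(y^{-1}x)^4-2(N_2(y)/2)^4$ splits into a first-layer piece and a $\lambda_2$-weighted second-layer piece. The first-layer piece unfolds via the Hilbert parallelogram law applied to $\|x_1\|^4+\|x_1-y_1\|^4-2\|y_1/2\|^4$: one gets $\|x_1-y_1/2\|^4$ (which contributes to $SN_2^4$) together with a cross term of size $\|y_1\|^2\|x_1-y_1/2\|^2$ that, by Lemma~\ref{lem:domtau}, dominates $\boldsymbol{\tau}^2(x,y)$. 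For the second-layer piece I would use Lemma~\ref{lem:groupstruct} to write $(y^{-1}x)_2=x_2-y_2+\text{BCH}_2(x,y)$, then apply Lemma~\ref{lem:dompoly2} and the parallelogram law to extract $\|(x_2,y_2)\|^2$, and Lemma~\ref{lem:dompoly3} to extract $N_2(\delta_{1/2}(y)^{-1}x)^4$; all absorptions succeed provided $\lambda_2$ is chosen small enough. Properties (2) and (3) are immediate from the formula for $N_2$.

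\emph{Inductive step.} Assume (1)--(3) hold for every $s_0\le s$, where $s<r$. Define
$$
N_{s+1}(x)^{2(s+1)} \;:=\; \lambda_{s+1}\|x_{s+1}\|^2+\sum_{s'=\lceil (s+1)/2\rceil}^{s}N_{s'}(x)^{2(s+1)}
$$
for a small constant $\lambda_{s+1}>0$ to be selected. For each index $s'$ in the summation, raise the inductive inequality (with $s'$ in place of $s$) to the power $q:=(s+1)/s'\in[1,2]$. Convexity of $t\mapsto t^q$ yields
$$
\left(\tfrac{N_{s'}(x)^{2s'}+N_{s'}(y^{-1}x)^{2s'}}{2}\right)^{\!q} \;\le\; \tfrac{N_{s'}(x)^{2(s+1)}+N_{s'}(y^{-1}x)^{2(s+1)}}{2},
$$
and Lemma~\ref{lem:binomineq} applied with $a=(N_{s'}(y)/2)^{2s'}$ and $b=c(SN_{s'}^{2s'}+D_{s'}+N_{s'}(\delta_{1/2}(y)^{-1}x)^{2s'})$ lower-bounds the $q$-th power of the inductive right-hand side by
$$
(N_{s'}(y)/2)^{2(s+1)}+\tfrac{s+1}{s'}c\,(N_{s'}(y)/2)^{2(s+1-s')}\bigl(SN_{s'}^{2s'}+D_{s'}+N_{s'}(\delta_{1/2}(y)^{-1}x)^{2s'}\bigr).
$$
Invoking the strengthened form of (2), $N_{s'}(y)+SN_{s'}(x,y)\gtrsim\|(x_{s+1-s'},y_{s+1-s'})\|^{1/(s+1-s')}$, which is valid exactly because $s+1-s'\le s'$ in this range, the leading factor $(N_{s'}(y)/2)^{2(s+1-s')}$ combined with the $SN_{s'}^{2s'}$ term contributes $\|(x_{s+1-s'},y_{s+1-s'})\|^2 D_{s'}(x,y)$ (from the $D_{s'}$ piece) plus terms that feed into $SN_{s+1}^{2(s+1)}$ and $N_{s+1}(\delta_{1/2}(y)^{-1}x)^{2(s+1)}$. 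Summing over $s'$ and reindexing via $s''=s+1-s'$ reproduces all the off-diagonal terms in $D_{s+1}$ as defined in Definition~\ref{def:Ddef}. The remaining top-layer term $\|(x_{s+1},y_{s+1})\|^2$ comes from the $\lambda_{s+1}\|x_{s+1}\|^2$ summand: Lemma~\ref{lem:dompoly2} (at level $s+1$) produces $(\lambda_{s+1}/2)\|x_{s+1}-y_{s+1}\|^2$ at the cost of a BCH error $D_{s+1}(x,y)-\|(x_{s+1},y_{s+1})\|^2$ that is absorbed by a small fraction of what was already extracted above, the parallelogram law then produces $\|y_{s+1}\|^2$, and Lemma~\ref{lem:dompoly3} supplies the matching $N_{s+1}(\delta_{1/2}(y)^{-1}x)^{2(s+1)}$ contribution. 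Choosing $\lambda_{s+1}$ sufficiently small closes all absorptions. Property (2) follows since $N_{s+1}$ contains some $N_{s'}$ with $s'\ge 2$ as an $\ell^{2(s+1)}$-summand, so $N_{s+1}(y)\ge N_{s'}(y)\ge\|y_1\|$ by induction, and the consequence about $SN_{s+1}$ is immediate from its defining maximum; (3) similarly propagates.

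The main obstacles I anticipate are two. First, the reindexing bookkeeping: one must verify that the sum $\sum_{s'=\lceil (s+1)/2\rceil}^{s}\|(x_{s+1-s'},y_{s+1-s'})\|^2 D_{s'}(x,y)$ reproduces exactly the off-diagonal piece of $D_{s+1}$, and this is what forces the specific summation range $s'\ge\lceil(s+1)/2\rceil$ (so that Lemma~\ref{lem:binomineq} yields a factor $SN_{s'}^{2(s+1-s')}$ with exponent $\le 2s'$, which by $SN_{s'}\ge\|(x_{s+1-s'},y_{s+1-s'})\|^{1/(s+1-s')}$ gives the needed $\|(x_{s+1-s'},y_{s+1-s'})\|^2$). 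Second, quantifying $\lambda_{s+1}$: it must simultaneously be small enough for the Lemma~\ref{lem:dompoly2} BCH absorption at level $s+1$ and large enough that $\lambda_{s+1}\|x_{s+1}\|^2$ survives as a nontrivial contribution to $SN_{s+1}^{2(s+1)}$ and to the dilated term; this is possible because the BCH absorption uses only a fixed fraction of the already-extracted $D_{s+1}-\|(x_{s+1},y_{s+1})\|^2$, which does not depend on $\lambda_{s+1}$. The rest of the argument is algebraic bookkeeping of the constants extracted by Lemmas~\ref{lem:binomineq} and~\ref{lem:dompoly1}--\ref{lem:dompoly3}.
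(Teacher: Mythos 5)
Your overall plan matches the paper's closely: same recursive definition of $N_s$ via \eqref{eq:Ndef}, same use of the parallelogram law and Lemmas~\ref{lem:domtau}, \ref{lem:dompoly2}, \ref{lem:dompoly3} in the base case, and same inductive mechanism of raising the level-$s'$ inequality to the power $q=(s+1)/s'$ and applying Lemma~\ref{lem:binomineq}. However, there is a genuine gap in how you split the binomial inequality in the inductive step, and as written your version does not close.

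You take $a=(N_{s'}(y)/2)^{2s'}$ and $b=c\bigl(SN_{s'}^{2s'}+D_{s'}+N_{s'}(\delta_{1/2}(y)^{-1}x)^{2s'}\bigr)$, so $qa^{q-1}b$ attaches the prefactor $(N_{s'}(y)/2)^{2(s+1-s')}$ to \emph{each} summand of $b$, and in particular the $D_{s'}(x,y)$ piece carries a coefficient that depends on $y$ alone. But the off-diagonal summands of $D_{s+1}$ require coefficients $\|(x_{s''},y_{s''})\|^2$ with genuine $x$-dependence. You invoke $N_{s'}(y)+SN_{s'}(x,y)\gtrsim\|(x_{s+1-s'},y_{s+1-s'})\|^{1/(s+1-s')}$, but the $SN_{s'}^{2s'}$ piece in your expansion is not multiplied by $D_{s'}$ — it is a separate summand, itself weighted by $(N_{s'}(y)/2)^{2(s+1-s')}$. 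Concretely, take $y=0$: then your lower bound coming from $b$ vanishes identically while $\|(x_{s+1-s'},0)\|^2D_{s'}(x,0)$ need not, so the claimed domination fails. The paper avoids this by putting the $SN_{s'}^{2s'}$ and $N_{s'}(\delta_{1/2}(y)^{-1}x)^{2s'}$ terms \emph{into} $a$, so that $a^{q-1}\geq\bigl((N_{s'}(y)/2)^{2s'}+c_1SN_{s'}^{2s'}\bigr)^{(s+1-s')/s'}$, which by part~(\ref{convnorm2}) (and because $s+1-s'\le s'$ on the summation range) is $\gtrsim\|(x_{s+1-s'},y_{s+1-s'})\|^2$; the $SN_{s'}^{2(s+1)}$ and $N_{s'}(\delta_{1/2}(y)^{-1}x)^{2(s+1)}$ contributions are then recovered by applying convexity to $a^q$ rather than from the cross term. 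Your sketch has the right constituents — the range of $s'$, the need for the strengthened form of~(\ref{convnorm2}), the smallness of $\lambda_{s+1}$ — but the specific $a$-vs-$b$ split is the wrong one and the argument does not go through until it is corrected to match the paper's.
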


\begin{proof}
The proof is by induction on $s$. The functions $N_s$ we construct will clearly be homogeneous quasi-norms and satisfy (\ref{convnorm2}) and (\ref{convnorm3}), so we will only concern ourselves with proving (\ref{convnorm1}).

\underline{Proof of base case}: The base case is $s = 2$. Throughout the proof of the base case, $c',c'',c'''$ denote (small) positive constants that depend on $\g$ but not on $x,y$. Each of the constants may depend on the ones previously appearing, but of course this is compatible with the fact that they are all independent of $x,y$. Define
$$N_2(x) := \sqrt[4]{\|x_1\|^4 + \lambda\|x_2\|^2}$$
where $\lambda > 0$ is to be chosen later. Recalling that $SN_2(x,y)^4 = \max(\|x_1-y_2/2\|^4,\|(x_2,y_2)\|^2) \leq \|x_1-y_2/2\|^4 + \|(x_2,y_2)\|^2$ and $D_2(x,y) = \boldsymbol{\tau}^2(x,y) + \|(x_2,y_2)\|^2$, we need to show
$$(N_2(x)^{4}+N_2(y^{-1}x)^{4})/2$$
$$\geq (N_2(y)/2)^{4} + c\|x_1-y_1/2\|^{4} + c\boldsymbol{\tau}^2(x,y) + c\|(x_2,y_2)\|^2 + cN_2(\delta_{1/2}(y)^{-1}x)^4$$
for some $\lambda, c > 0$. First let's write out the definitions of some of the terms in the inequality. \\ \\
$\begin{array}{lll}
N_2(x)^4 &=& \|x_1\|^4 + \lambda\|x_2\|^2 \\
N_2(y^{-1}x)^4 &=& \|x_1-y_1\|^4 + \lambda\|(y^{-1}x)_2\|^2 \\
N_2(y)^4 &=& \|y_1\|^4 + \lambda\|y_2\|^2 \\
\end{array}$ \\ \\
By convexity, parallelogram law, and Lemma \ref{lem:domtau},
$$(\|x_1\|^4 + \|x_1-y_1\|^4)/2 \geq ((\|x_1\|^2 + \|x_1-y_1\|^2)/2)^2$$
$$= (\|y_1/2\|^2 + \|x_1-y_1/2\|^2)^2 = (\|y_1\|/2)^4 + 2\|y_1/2\|^2\|x_1-y_1/2\|^2 + \|x_1-y_1/2\|^4$$
$$\overset{\text{Lem }\ref{lem:domtau}}{\geq} (\|y_1\|/2)^4 + c'\boldsymbol{\tau}^2(x,y) + \|x_1-y_1/2\|^4$$
For some $c' > 0$. Thus, it suffices to show that for sufficiently small $\lambda,c > 0$, 
$$\frac{c'}{2}\boldsymbol{\tau}^2(x,y) + \lambda\|x_2\|^2 + \lambda\|(y^{-1}x)_2\|^2 + \frac{1}{2}\|x_1-y_1/2\|^4$$
$$\geq 2^{-4}\lambda\|y_2\|^2 + c\boldsymbol{\tau}^2(x,y) + c\|(x_2,y_2)\|^2 + cN_2(\delta_{1/2}(y)^{-1}x)^4$$
By Lemma \ref{lem:dompoly2}, the following inequality is true for sufficiently small $\lambda > 0$:
$$\frac{c'}{4}\boldsymbol{\tau}^2(x,y) + \lambda\|(y^{-1}x)_2\|^2 = \frac{c'}{4}(D_2(x,y) - \|(x_2,y_2)\||^2) + \lambda\|(y^{-1}x)_2\|^2 \overset{\text{Lem }\ref{lem:dompoly2}}{\geq} \frac{\lambda}{2}\|x_2-y_2\|^2$$
Thus it suffices for the following inequality to hold for $\lambda,c > 0$ sufficiently small:
$$\frac{c'}{4}\boldsymbol{\tau}^2(x,y) + \lambda\|x_2\|^2 + \frac{\lambda}{2}\|x_2-y_2\|^2 + \frac{1}{2}\|x_1-y_1/2\|^4$$
$$\geq 2^{-4}\lambda\|y_2\|^2 + c\|(x_2,y_2)\|^2 + cN_2(\delta_{1/2}(y)^{-1}x)^4$$
We have
$$\lambda\|x_2\|^2 + \frac{\lambda}{2}\|x_2-y_2\|^2 \geq \frac{\lambda}{2}(\|x_2\|^2 + \|x_2-y_2\|^2)$$
$$= \frac{\lambda}{4}(\|x_2\|^2 + \|x_2-y_2\|^2) + \frac{\lambda}{4}(\|x_2\|^2 + \|x_2-y_2\|^2) \geq 2^{-4}\lambda\|y_2\|^2 + c''\|(x_2,y_2)\|^2$$
Thus it remains to show
$$\frac{c'}{4}\boldsymbol{\tau}^2(x,y) + \frac{c''}{2}\|(x_2,y_2)\|^2 + \frac{1}{2}\|x_1-y_1/2\|^4 \geq cN_2(\delta_{1/2}(y)^{-1}x)^4$$
for $c > 0$ sufficiently small.
By Lemma \ref{lem:dompoly3} we have
$$\frac{c'}{4}\boldsymbol{\tau}^2(x,y) + \frac{c''}{2}\|(x_2,y_2)\|^2 \geq c'''(\boldsymbol{\tau}^2(x,y) + \|(x_2,y_2)\|^2) = c'''D_s(x,y) \overset{\text{Lem }\ref{lem:dompoly3}}{\geq} c\lambda\|(\delta_{1/2}(y)^{-1}x)_2\|^2$$
$c > 0$ sufficiently small, and thus it remains to show
$$c\lambda\|(\delta_{1/2}(y)^{-1}x)_2\|^2 + \frac{1}{2}\|x_1-y_1/2\|^4 \geq cN_2(\delta_{1/2}(y)^{-1}x)^4$$
This is true by definition of $N_2$. This completes the proof of the base case.

\underline{Proof of inductive step}: Now assume the statement holds for all $2 \leq s' \leq s$ some $2 \leq s \leq r-1$. Define $N_{s+1}$ by
\begin{equation} \label{eq:Ndef}
N_{s+1}(x) := \sqrt[2(s+1)]{\lambda\|x_{s+1}\|^2 + \sum_{s'=\lceil (s+1)/2 \rceil}^s N_{s'}^{2(s+1)}(x)}
\end{equation}
where $\lambda$ is a (small) positive constant (different $\lambda$ than in the base case) to be chosen later (independent of $x,y$). Throughout the remainder of the proof, $c_1-c_7$ denote (small) positive constants that depend on $\g$ but not on $x,y$. Each of the constants may depend on the ones previously appearing, but of course this is compatible with the fact that they are all independent of $x,y$. The constant $\lambda$ will end up depending on $c_2$ (which in turn depends on $c_1$), and the subsequent constants will depend on $\lambda$.

We now prove the inductive step. In what follows, we adopt some conventions to help make the proof more readable. There are two types of equalities/inequalities we use relating each of the expressions below. The first type is simply using a lemma, definition, inductive hypothesis, or convexity or trivial numerical inequality. Whenever an equality/inequality of this type is used, the particular terms in the expression that change from one to the next are \textbf{bolded}. No other terms change, except for the bolded ones to which the particular lemma, definition, inductive hypothesis, or convexity or trivial numerical inequality apply. Apart from the trivial numerical inequalities, the name of the lemma or definition, ``ind hyp", or ``convexity" decorates the equality/inequality symbol. The second type of equality/inequality used is always an equality and the equality symbol is decorated with the word ``rearrange". This means we use trivialities like commutivity of addition or multiplication, reindexing of a sum, or no symbolic changes at all. Importantly, we also use equalities decorated with ``rearrange" to change which terms are bolded in the expression, in preparation for the use of another equality/inequality of the first type. \\ \\
$\begin{array}{lc}
& \boldsymbol{(N_{s+1}(x)^{2(s+1)}+N_{s+1}(y^{-1}x)^{2(s+1)})/2} \\ \\
     
\overset{\eqref{eq:Ndef}}{=} & \boldsymbol{\left(\lambda\|x_{s+1}\|^2 + \sum_{s'= \lceil (s+1)/2 \rceil}^s N_{s'}^{2(s+1)}(x) + \|(y^{-1}x)_{s+1}\|^2\right.} \\
& \boldsymbol{\left.+ \sum_{s'= \lceil (s+1)/2 \rceil}^s N_{s'}^{2(s+1)}(y^{-1}x)\right)/2} \\ \\

\overset{\text{rearrange}}{=} & \dfrac{\lambda}{2}(\|x_{s+1}\|^2 + \|(y^{-1}x)_{s+1}\|^2) + \boldsymbol{\left(\sum_{s'= \lceil (s+1)/2 \rceil}^s N_{s'}^{2(s+1)}(x) + N_{s'}^{2(s+1)}(y^{-1}x)\right)/2} \\ \\

\overset{\text{convexity}}{\geq} & \dfrac{\lambda}{2}(\|x_{s+1}\|^2 + \|(y^{-1}x)_{s+1}\|^2) + \boldsymbol{\sum_{s'= \lceil (s+1)/2 \rceil}^s \left(\dfrac{N_{s'}^{2s'}(x) + N_{s'}^{2s'}(y^{-1}x)}{2}\right)^{\frac{s+1}{s'}}}
\end{array}$ \\ \\

$\begin{array}{lc}
\overset{\text{ind hyp }(\ref{convnorm1})}{\geq} & \dfrac{\lambda}{2}(\|x_{s+1}\|^2 + \|(y^{-1}x)_{s+1}\|^2) \\
& + \sum_{s'= \lceil (s+1)/2 \rceil}^s \boldsymbol{((N_{s'}(y)/2)^{2s'} + c_1SN_{s'}^{2s'}(x,y)} \\
& \boldsymbol{+ c_1D_{s'}(x,y) + c_1N_{s'}(\delta_{1/2}(y)^{-1}x)^{2s'})^{\frac{s+1}{s'}}} \\ \\

\overset{\text{Lem }\ref{lem:binomineq}}{\geq} & \dfrac{\lambda}{2}(\|x_{s+1}\|^2 + \|(y^{-1}x)_{s+1}\|^2) \\
& + \sum_{s'= \lceil (s+1)/2 \rceil}^s \boldsymbol{((N_{s'}(y)/2)^{2s'} + c_1SN_{s'}^{2s'}(x,y) +  c_1N_{s'}(\delta_{1/2}(y)^{-1}x)^{2s'})^{\frac{s+1}{s'}}} \\
& \boldsymbol{+ ((N_{s'}(y)/2)^{2s'} + c_1SN_{s'}^{2s'}(x,y))^{\frac{s+1-s'}{s'}}c_1D_{s'}(x,y)} \\ \\

\overset{\text{rearrange}}{=} & \dfrac{\lambda}{2}(\|x_{s+1}\|^2 + \|(y^{-1}x)_{s+1}\|^2) \\
& + \sum_{s'= \lceil (s+1)/2 \rceil}^s \boldsymbol{((N_{s'}(y)/2)^{2s'} + c_1SN_{s'}^{2s'}(x,y) +  c_1N_{s'}(\delta_{1/2}(y)^{-1}x)^{2s'})^{\frac{s+1}{s'}}} \\
& + ((N_{s'}(y)/2)^{2s'} + c_1SN_{s'}^{2s'}(x,y))^{\frac{s+1-s'}{s'}}c_1D_{s'}(x,y) \\ \\

\overset{\text{convexity}}{\geq} & \dfrac{\lambda}{2}(\|x_{s+1}\|^2 + \|(y^{-1}x)_{s+1}\|^2) \\
& + \sum_{s'= \lceil (s+1)/2 \rceil}^s \boldsymbol{(N_{s'}(y)/2)^{2(s+1)} + c_1SN_{s'}^{2(s+1)}(x,y) +  c_1N_{s'}(\delta_{1/2}(y)^{-1}x)^{2(s+1)}} \\
& + ((N_{s'}(y)/2)^{2s'} + c_1SN_{s'}^{2s'}(x,y))^{\frac{s+1-s'}{s'}}c_1D_{s'}(x,y) \\ \\

\overset{\text{rearrange}}{=} & \dfrac{\lambda}{2}(\|x_{s+1}\|^2 + \|(y^{-1}x)_{s+1}\|^2) \\
& + \sum_{s'= \lceil (s+1)/2 \rceil}^s (N_{s'}(y)/2)^{2(s+1)} + c_1SN_{s'}^{2(s+1)}(x,y) +  c_1N_{s'}(\delta_{1/2}(y)^{-1}x)^{2(s+1)} \\
& + \boldsymbol{((N_{s'}(y)/2)^{2s'} + c_1SN_{s'}^{2s'}(x,y))^{\frac{s+1-s'}{s'}}c_1}D_{s'}(x,y)
\end{array}$  \\ \\

$\begin{array}{lc}
\overset{\text{ind hyp }(\ref{convnorm2})}{\geq} & \dfrac{\lambda}{2}(\|x_{s+1}\|^2 + \|(y^{-1}x)_{s+1}\|^2) \\
& + \sum_{s'= \lceil (s+1)/2 \rceil}^s (N_{s'}(y)/2)^{2(s+1)} + c_1SN_{s'}^{2(s+1)}(x,y) +  c_1N_{s'}(\delta_{1/2}(y)^{-1}x)^{2(s+1)} \\
& + \boldsymbol{c_2\|(x_{s+1-s'},y_{s+1-s'})\|^2}D_{s'}(x,y)
\end{array}$ \\ \\

$\begin{array}{lc}
\overset{\text{rearrange}}{=} & \dfrac{\lambda}{2}(\|x_{s+1}\|^2 + \|(y^{-1}x)_{s+1}\|^2) \\
& + \boldsymbol{\sum_{s'= \lceil (s+1)/2 \rceil}^s} (N_{s'}(y)/2)^{2(s+1)} + c_1\boldsymbol{SN_{s'}^{2(s+1)}(x,y)} +  c_1N_{s'}(\delta_{1/2}(y)^{-1}x)^{2(s+1)} \\
& + \sum_{s'=1}^{\lfloor (s+1)/2 \rfloor} c_2\|(x_{s'},y_{s'})\|^2D_{s+1-s'}(x,y)
\end{array}$ \\ \\

$\begin{array}{lc}
\geq & \dfrac{\lambda}{2}(\|x_{s+1}\|^2 + \|(y^{-1}x)_{s+1}\|^2) \\
& + c_1\boldsymbol{SN_{s}^{2(s+1)}(x,y)} + \sum_{s'= \lceil (s+1)/2 \rceil}^s (N_{s'}(y)/2)^{2(s+1)} +  c_1N_{s'}(\delta_{1/2}(y)^{-1}x)^{2(s+1)} \\
& + \sum_{s'=1}^{\lfloor (s+1)/2 \rfloor} c_2\|(x_{s'},y_{s'})\|^2D_{s+1-s'}(x,y) \\ \\

\overset{\text{rearrange}}{=} & \dfrac{\lambda}{2}(\|x_{s+1}\|^2 + \|(y^{-1}x)_{s+1}\|^2) \\
& + c_1SN_{s}^{2(s+1)}(x,y) + \sum_{s'= \lceil (s+1)/2 \rceil}^s (N_{s'}(y)/2)^{2(s+1)} +  c_1N_{s'}(\delta_{1/2}(y)^{-1}x)^{2(s+1)} \\
& + \boldsymbol{\sum_{s'=1}^{\lfloor (s+1)/2 \rfloor} c_2\|(x_{s'},y_{s'})\|^2D_{s+1-s'}(x,y)}
\end{array}$ \\ \\

$\begin{array}{lc}
\overset{\text{Def }\ref{def:Ddef}}{=} & \dfrac{\lambda}{2}(\|x_{s+1}\|^2 + \|(y^{-1}x)_{s+1}\|^2) \\
& + c_1SN_{s}^{2(s+1)}(x,y) + \sum_{s'= \lceil (s+1)/2 \rceil}^s (N_{s'}(y)/2)^{2(s+1)} +  c_1N_{s'}(\delta_{1/2}(y)^{-1}x)^{2(s+1)} \\
& + \boldsymbol{c_2(D_{s+1}(x,y) - \|(x_{s+1},y_{s+1})\|^2)} \\ \\

\overset{\text{rearrange}}{=} & c_1SN_{s}^{2(s+1)}(x,y) + \sum_{s'= \lceil (s+1)/2 \rceil}^s (N_{s'}(y)/2)^{2(s+1)} +  c_1N_{s'}(\delta_{1/2}(y)^{-1}x)^{2(s+1)} \\
& + \dfrac{c_2}{2}(D_{s+1}(x,y) - \|(x_{s+1},y_{s+1})\|^2) \\
& \boldsymbol{\dfrac{c_2}{2}(D_{s+1}(x,y) - \|(x_{s+1},y_{s+1})\|^2) + \dfrac{\lambda}{2}(\|x_{s+1}\|^2 + \|(y^{-1}x)_{s+1}\|^2)} =: (*)
\end{array}$ \\ \\
By Lemma \ref{lem:dompoly2}, we can choose $\lambda > 0$ sufficiently small so that
$$
\frac{c_2}{2}(D_{s+1}(x,y) - \|(x_{s+1},y_{s+1})\|^2) + \frac{\lambda}{2}(\|x_{s+1}\|^2 + \|(y^{-1}x)_{s+1}\|^2) \overset{\text{Lem }\ref{lem:dompoly2}}{\geq} \frac{\lambda}{4}(\|x_{s+1}\|^2+\|x_{s+1}-y_{s+1}\|^2)$$
$$= \frac{\lambda}{8}(\|x_{s+1}\|^2+\|x_{s+1}-y_{s+1}\|^2) + \frac{\lambda}{8}(\|x_{s+1}\|^2+\|x_{s+1}-y_{s+1}\|^2)$$
$$\geq \frac{\lambda}{16}\|y_{s+1}\|^2 + c_3\|(x_{s+1},y_{s+1})\|^2 \geq 2^{-(s+1)}\lambda\|y_{s+1}\|^2 + c_3\|(x_{s+1},y_{s+1})\|^2$$
And thus we get \\ \\
$\begin{array}{lc}
(*) \geq & c_1SN_{s}^{2(s+1)}(x,y) + \sum_{s'= \lceil (s+1)/2 \rceil}^s (N_{s'}(y)/2)^{2(s+1)} +  c_1N_{s'}(\delta_{1/2}(y)^{-1}x)^{2(s+1)} \\
& + \dfrac{c_2}{2}(D_{s+1}(x,y) - \|(x_{s+1},y_{s+1})\|^2) \\
& \boldsymbol{2^{-(s+1)}\lambda\|y_{s+1}\|^2 + c_3\|(x_{s+1},y_{s+1})\|^2}
\end{array}$ \\ \\

$\begin{array}{lc}
\overset{\text{rearrange}}{=} & c_1SN_{s}^{2(s+1)}(x,y) + \boldsymbol{2^{-(s+1)}\lambda\|y_{s+1}\|^2 + \sum_{s'= \lceil (s+1)/2 \rceil}^s (N_{s'}(y)/2)^{2(s+1)}} \\
& + \sum_{s'= \lceil (s+1)/2 \rceil}^s c_1N_{s'}(\delta_{1/2}(y)^{-1}x)^{2(s+1)} + \dfrac{c_2}{2}(D_{s+1}(x,y) - \|(x_{s+1},y_{s+1})\|^2) \\
& + c_3\|(x_{s+1},y_{s+1})\|^2 \\ \\

\overset{\eqref{eq:Ndef}}{=} & c_1SN_{s}^{2(s+1)}(x,y) + \boldsymbol{(N_{s+1}(y)/2)^{2(s+1)}} \\
& + \sum_{s'= \lceil (s+1)/2 \rceil}^s c_1N_{s'}(\delta_{1/2}(y)^{-1}x)^{2(s+1)} + \dfrac{c_2}{2}(D_{s+1}(x,y) - \|(x_{s+1},y_{s+1})\|^2) \\
& + c_3\|(x_{s+1},y_{s+1})\|^2
\end{array}$ \\ \\

$\begin{array}{lc}
\overset{\text{rearrange}}{=} & \boldsymbol{c_1SN_{s}^{2(s+1)}(x,y) + \dfrac{c_3}{2}\|(x_{s+1},y_{s+1})\|^2} + (N_{s+1}(y)/2)^{2(s+1)} \\
& + \sum_{s'= \lceil (s+1)/2 \rceil}^s c_1N_{s'}(\delta_{1/2}(y)^{-1}x)^{2(s+1)} \\
& + \dfrac{c_2}{2}(D_{s+1}(x,y) - \|(x_{s+1},y_{s+1})\|^2) + \dfrac{c_3}{2}\|(x_{s+1},y_{s+1})\|^2 \\ \\

\overset{\text{Def  }\ref{def:SNdef}}{\geq} & \boldsymbol{c_4SN_{s+1}^{2(s+1)}(x,y)} + (N_{s+1}(y)/2)^{2(s+1)} \\
& + \sum_{s'= \lceil (s+1)/2 \rceil}^s c_1N_{s'}(\delta_{1/2}(y)^{-1}x)^{2(s+1)} \\
& + \dfrac{c_2}{2}(D_{s+1}(x,y) - \|(x_{s+1},y_{s+1})\|^2) + \dfrac{c_3}{2}\|(x_{s+1},y_{s+1})\|^2 \\ \\

\overset{\text{rearrange}}{=} & (N_{s+1}(y)/2)^{2(s+1)} + c_4SN_{s+1}^{2(s+1)}(x,y) + \sum_{s'= \lceil (s+1)/2 \rceil}^s c_1N_{s'}(\delta_{1/2}(y)^{-1}x)^{2(s+1)} \\
& + \boldsymbol{\dfrac{c_2}{2}(D_{s+1}(x,y) - \|(x_{s+1},y_{s+1})\|^2) + \dfrac{c_3}{2}\|(x_{s+1},y_{s+1})\|^2}
\end{array}$ \\ \\

$\begin{array}{lc}
\geq & (N_{s+1}(y)/2)^{2(s+1)} + c_4SN_{s+1}^{2(s+1)}(x,y) + \sum_{s'= \lceil (s+1)/2 \rceil}^s c_1N_{s'}(\delta_{1/2}(y)^{-1}x)^{2(s+1)} \\
& + \boldsymbol{c_5D_{s+1}(x,y)} \\ \\

\overset{\text{rearrange}}{=} & (N_{s+1}(y)/2)^{2(s+1)} + c_4SN_{s+1}^{2(s+1)}(x,y) + \dfrac{c_5}{2}D_{s+1}(x,y) \\
& + \sum_{s'= \lceil (s+1)/2 \rceil}^s c_1N_{s'}(\delta_{1/2}(y)^{-1}x)^{2(s+1)} + \boldsymbol{\dfrac{c_5}{2}D_{s+1}(x,y)}
\end{array}$ \\ \\

$\begin{array}{lc}
\overset{\text{Lem }\ref{lem:dompoly3}}{\geq} & (N_{s+1}(y)/2)^{2(s+1)} + c_4SN_{s+1}^{2(s+1)}(x,y) + \dfrac{c_5}{2}D_{s+1}(x,y) \\
& + \sum_{s'= \lceil (s+1)/2 \rceil}^s c_1N_{s'}(\delta_{1/2}(y)^{-1}x)^{2(s+1)} + \boldsymbol{c_6\|(\delta_{1/2}(y)^{-1}x)_{s+1}\|^2} \\ \\

\overset{\text{rearrange}}{=} & (N_{s+1}(y)/2)^{2(s+1)} + c_4SN_{s+1}^{2(s+1)}(x,y) + \dfrac{c_5}{2}D_{s+1}(x,y) \\
& + \boldsymbol{\sum_{s'= \lceil (s+1)/2 \rceil}^s c_1N_{s'}(\delta_{1/2}(y)^{-1}x)^{2(s+1)} + c_6\|(\delta_{1/2}(y)^{-1}x)_{s+1}\|^2} \\ \\

\overset{\eqref{eq:Ndef}}{\geq} & (N_{s+1}(y)/2)^{2(s+1)} + c_4SN_{s+1}^{2(s+1)}(x,y) + \dfrac{c_5}{2}D_{s+1}(x,y) \\
& + \boldsymbol{c_7N_{s+1}(\delta_{1/2}(y)^{-1}x)^{2(s+1)}}
\end{array}$ \\
\end{proof}

\begin{lemma} \label{lem:convnorm2}
There exists a positive definite homogeneous quasi-norm $N_r$ on $\g$ and a constant $c > 0$ (depending on $\g$ but not on $x,y$) such that for all $p \geq r$ and all $x,y \in \g$,
$$(N_r(x)^{2p}+N_r(y^{-1}x)^{2p})/2 - (N_r(y)/2)^{2p} \geq c^{p/r}N_r(\delta_{1/2}(y)^{-1}x)^{2p}$$
\end{lemma}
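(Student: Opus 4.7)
The plan is to deduce this lemma as a direct consequence of Lemma \ref{lem:convnorm1} applied at $s = r$, by an elementary convexity argument that lifts the exponent from $2r$ to $2p$. First, I would take the $N_r$ produced by Lemma \ref{lem:convnorm1}; by part (\ref{convnorm3}) of that lemma it is already a positive-definite homogeneous quasi-norm on $\g$, so nothing else needs to be constructed. Discarding the nonnegative $SN_r$ and $D_r$ terms on the right side of Lemma \ref{lem:convnorm1}(\ref{convnorm1}), I obtain a constant $c > 0$ (depending only on $\g$) such that for all $x,y \in \g$,
$$\frac{N_r(x)^{2r} + N_r(y^{-1}x)^{2r}}{2} \geq (N_r(y)/2)^{2r} + cN_r(\delta_{1/2}(y)^{-1}x)^{2r}.$$

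Next, fix $p \geq r$ and set $q := p/r \geq 1$. The plan is to raise this base inequality to the $q$-th power. On the left, I would use convexity of $t \mapsto t^q$ on $[0,\infty)$ in the form $\tfrac{a^q + b^q}{2} \geq \left(\tfrac{a+b}{2}\right)^q$ applied to $a = N_r(x)^{2r}$ and $b = N_r(y^{-1}x)^{2r}$, yielding
$$\frac{N_r(x)^{2p} + N_r(y^{-1}x)^{2p}}{2} \geq \left(\frac{N_r(x)^{2r} + N_r(y^{-1}x)^{2r}}{2}\right)^{q}.$$
On the right, I would use monotonicity of $t \mapsto t^q$ together with the elementary superadditivity $(u+v)^q \geq u^q + v^q$ (valid for $u,v \geq 0$ and $q \geq 1$, since $q \geq 1$ means $(u+v)^q \geq \max(u,v)^q \cdot (1+\min/\max)^q \geq u^q + v^q$, or simply by factoring), applied with $u = (N_r(y)/2)^{2r}$ and $v = c N_r(\delta_{1/2}(y)^{-1}x)^{2r}$. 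This gives
$$\left((N_r(y)/2)^{2r} + c N_r(\delta_{1/2}(y)^{-1}x)^{2r}\right)^{q} \geq (N_r(y)/2)^{2p} + c^{q} N_r(\delta_{1/2}(y)^{-1}x)^{2p}.$$

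Chaining these three inequalities together produces
$$\frac{N_r(x)^{2p} + N_r(y^{-1}x)^{2p}}{2} \geq (N_r(y)/2)^{2p} + c^{p/r}N_r(\delta_{1/2}(y)^{-1}x)^{2p},$$
which is exactly the desired conclusion after rearrangement. There is no real obstacle: the work of proving a parallelogram-type inequality on $\g$ has already been done inductively in Lemma \ref{lem:convnorm1}, and the present lemma is just a convexity upgrade of the exponent. The only point requiring mild care is bookkeeping the constant, which is why it appears as $c^{p/r}$ rather than $c$; this arises naturally from the homogeneity of the superadditivity step in the exponent $q = p/r$.
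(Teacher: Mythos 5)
Your proposal is correct and is essentially identical to the paper's proof: both take $N_r$ from Lemma \ref{lem:convnorm1}, raise the base inequality to the power $q = p/r$, and use the convexity inequality $\tfrac{a^q+b^q}{2} \geq \left(\tfrac{a+b}{2}\right)^q$ on the left together with the superadditivity $(u+v)^q \geq u^q + v^q$ on the right. You simply spell out the intermediate steps that the paper compresses into a single displayed chain.
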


\begin{proof}
Let $N_r,c$ be as in the conclusion of Lemma \ref{lem:convnorm1}. Let $p \geq r$. Then by convexity and that lemma,
$$(N_r(x)^{2p}+N_r(y^{-1}x)^{2p})/2 \geq ((N_r(x)^{2r}+N_r(y^{-1}x)^{2r})/2)^{p/r}$$
$$\overset{\text{Lem }\ref{lem:convnorm1}}{\geq} ((N_r(y)/2)^{2r} + cN_r(\delta_{1/2}(y)^{-1}x)^{2r})^{p/r} \geq (N_r(y)/2)^{2p} + c^{p/r}N_r(\delta_{1/2}(y)^{-1}x)^{2p}$$
\end{proof}

\begin{lemma} \label{lem:convmetric}
There exists a left invariant, homogeneous, positive definite quasi-metric $d_{N_r}$ on $\g$ and a constant $c > 0$ (depending on $\g$ but not on $w,x,y,z$) such that for all $p \geq r$ and $w,x,y,z \in \g$, 
$$(2d_{N_r}(y,x)^{2p}+d_{N_r}(y,w)^{2p}+d_{N_r}(y,z)^{2p})/2 - (d_{N_r}(x,w)/2)^{2p} - (d_{N_r}(x,z)/2)^{2p} \geq c'd_{N_r}(w,z)^{2p}$$
\end{lemma}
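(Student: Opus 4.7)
The plan is to derive the 4-point inequality by applying Lemma \ref{lem:convnorm2} twice and gluing the two copies via left invariance of $d_{N_r}$ and the fact that $\delta_{1/2}$ is a graded group automorphism of $\g$. This is the exact analog, transported to the graded nilpotent setting, of how one derives a 4-point inequality for a $p$-convex Banach space from its renormed parallelogram inequality (compare step (\ref{1})$\to$(\ref{2}) in the discussion of proof methods).

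The first step is a reduction via left invariance. Given $w,x,y,z\in\g$, set $a:=x^{-1}y$, $b:=x^{-1}w$, $c:=x^{-1}z$. Then, using the definition of $d_{N_r}$ and symmetry of $N_r$,
$$d_{N_r}(x,y)=N_r(a),\quad d_{N_r}(x,w)=N_r(b),\quad d_{N_r}(x,z)=N_r(c),$$
$$d_{N_r}(y,w)=N_r(b^{-1}a),\quad d_{N_r}(y,z)=N_r(c^{-1}a),\quad d_{N_r}(w,z)=N_r(c^{-1}b).$$
Now apply Lemma \ref{lem:convnorm2} twice, once to the pair $(a,b)$ and once to the pair $(a,c)$, and add the two inequalities. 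The left-hand side becomes precisely
$$\bigl(2N_r(a)^{2p}+N_r(b^{-1}a)^{2p}+N_r(c^{-1}a)^{2p}\bigr)/2 - (N_r(b)/2)^{2p} - (N_r(c)/2)^{2p},$$
which by the dictionary above is exactly the quantity to be bounded in the conclusion of the lemma, while the right-hand side becomes $c^{p/r}\bigl(N_r(\delta_{1/2}(b)^{-1}a)^{2p}+N_r(\delta_{1/2}(c)^{-1}a)^{2p}\bigr)$.

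The main step is to bound this sum of ``midpoint'' errors from below by a constant multiple of $N_r(c^{-1}b)^{2p}=d_{N_r}(w,z)^{2p}$. Here the key structural input is that $\delta_{1/2}$ is a Lie group automorphism of $\g$ (Section \ref{ss:CarnotGroups}), so $\delta_{1/2}(c)^{-1}\delta_{1/2}(b)=\delta_{1/2}(c^{-1}b)$, and a direct computation yields
$$\delta_{1/2}(c)^{-1}a\cdot\bigl(\delta_{1/2}(b)^{-1}a\bigr)^{-1}=\delta_{1/2}(c^{-1}b).$$
The quasi-triangle inequality for $N_r$ (available since $N_r$ is a positive definite homogeneous quasi-norm, per Section \ref{ss:NormsMetrics}), together with symmetry of $N_r$ and the homogeneity identity $N_r(\delta_{1/2}(\cdot))=\tfrac{1}{2}N_r(\cdot)$, then gives
$$\tfrac{1}{2}N_r(c^{-1}b)\leq C\bigl(N_r(\delta_{1/2}(c)^{-1}a)+N_r(\delta_{1/2}(b)^{-1}a)\bigr)$$
for some $C$ depending only on $\g$. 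Raising to the $2p$-th power and applying the convexity estimate $(s+t)^{2p}\leq 2^{2p-1}(s^{2p}+t^{2p})$ produces the desired lower bound $N_r(\delta_{1/2}(b)^{-1}a)^{2p}+N_r(\delta_{1/2}(c)^{-1}a)^{2p}\geq c'' N_r(c^{-1}b)^{2p}$, with $c''$ depending on $p,\g$ but not on $a,b,c$. Combining with the output of the preceding step yields the claimed 4-point inequality with constant $c'=c^{p/r}c''$.

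The only delicate point is the passage from the two midpoint terms back to $N_r(c^{-1}b)^{2p}$ in the third step; this is precisely where the automorphism property of $\delta_{1/2}$ enters and is the feature that distinguishes the graded nilpotent setting from a general metric space. Everything else is a formal bookkeeping exercise that mirrors the well-known Banach-space argument.
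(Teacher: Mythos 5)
Your proposal is correct and matches the paper's own argument: both apply Lemma \ref{lem:convnorm2} twice (to the pairs $(a,b)$ and $(a,c)$, i.e.\ after left-translating $x$ to the origin), add, and then recover $d_{N_r}(w,z)^{2p}$ from the two midpoint terms via the quasi-triangle inequality, homogeneity of $N_r$ under $\delta_{1/2}$, and the elementary $(s+t)^{2p}\le 2^{2p-1}(s^{2p}+t^{2p})$ bound. Your explicit identity $\delta_{1/2}(c)^{-1}a\cdot(\delta_{1/2}(b)^{-1}a)^{-1}=\delta_{1/2}(c^{-1}b)$ is exactly the group-theoretic content that the paper encodes as applying the quasi-triangle inequality to $d_{N_r}(\cdot,\delta_{1/2}(w))$ and $d_{N_r}(\cdot,\delta_{1/2}(z))$, so the two write-ups are essentially identical.
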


\begin{proof}
Let $N_r,c$ be as in the previous lemma. Let $d_{N_r}$ be the metric derived from $N_r$; $d_{N_r}(x,y) := N_r(y^{-1}x)$. By left invariance of the metric, we may assume $x = 0$. Then by applying the previous lemma to each of the pairs $(y,w)$ and $(y,z)$, we obtain
$$(d_{N_r}(y,0)^{2p} + d_{N_r}(y,w)^{2p})/2 - (d_{N_r}(0,w)/2)^{2p} \geq c^{p/r}d_{N_r}(\delta_{1/2}(w),0)^{2p}$$
$$(d_{N_r}(y,0)^{2p} + d_{N_r}(y,z)^{2p})/2 - (d_{N_r}(0,z)/2)^{2p} \geq c^{p/r}d_{N_r}(\delta_{1/2}(z),0)^{2p}$$
Adding these and then using using H\"older, the quasi-triangle inequality, and homogeneity gives
$$(2d_{N_r}(y,0)^{2p} + d_{N_r}(y,w)^{2p} + d_{N_r}(y,z)^{2p})/2 - (d_{N_r}(0,w)/2)^{2p} - (d_{N_r}(0,z)/2)^{2p}$$
$$\geq c^{p/r}(d_{N_r}(\delta_{1/2}(w),0)^{2p} + d_{N_r}(\delta_{1/2}(z),0)^{2p}) \geq 2^{-2p+1}c^{p/r}(d_{N_r}(\delta_{1/2}(w),0) + d_{N_r}(\delta_{1/2}(z),0))^{2p}$$
$$\geq c'd_{N_r}(\delta_{1/2}(w),\delta_{1/2}(z))^{2p} = 2^{-2p}c'd_{N_r}(w,z)^{2p}$$
for some $c' > 0$.
\end{proof}

\begin{theorem} \label{thm:upperboundmain}
Every graded nilpotent Lie group of step $r$, equipped with a left invariant metric homogeneous with respect to the dilations induced by the grading, is Markov $p$-convex for every $p \in [2r,\infty)$.
\end{theorem}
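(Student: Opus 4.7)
The strategy is that all of the heavy group-theoretic and combinatorial work has already been done: Lemma \ref{lem:convmetric} supplies a metric four-point inequality on the underlying nilpotent Lie group, and the passage from such an inequality to the Markov $p$-convexity inequality of Definition \ref{def:Markovdef} is formally identical to the Banach space case of Mendel--Naor. So my proposal is simply to execute that passage.

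First, I would fix $p \in [2r,\infty)$ and write $p = 2p'$ with $p' \geq r$, so that Lemma \ref{lem:convmetric} (applied with its $p$ equal to our $p'$) provides a constant $c > 0$ and a left-invariant, homogeneous, positive definite quasi-metric $d = d_{N_r}$ on $\g$ such that
\begin{equation*}
\frac{2d(y,x)^{p} + d(y,w)^{p} + d(y,z)^{p}}{2} - \Bigl(\frac{d(x,w)}{2}\Bigr)^{p} - \Bigl(\frac{d(x,z)}{2}\Bigr)^{p} \geq c\, d(w,z)^{p}
\end{equation*}
for all $w,x,y,z \in \g$. By the biLipschitz uniqueness of left-invariant homogeneous quasi-metrics discussed in Section \ref{ss:NormsMetrics}, it suffices to prove that $(\g,d_{N_r})$ is Markov $p$-convex, since $\Pi_p$ is a biLipschitz invariant.

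Next, I would feed this into the Mendel--Naor scheme. Let $\{X_t\}_{t \in \Z}$ be a Markov chain on $\Omega$ and let $f\colon \Omega \to \g$. For each integer $k \geq 1$ and each $t \in \Z$, I apply the four-point inequality to the tuple
\begin{equation*}
x = f(X_{t-2^{k}}), \qquad y = f(X_{t-2^{k-1}}), \qquad w = f(X_{t}), \qquad z = f(\tilde X_{t}(t-2^{k})).
\end{equation*}
The key probabilistic fact is that, conditioned on $X_{t-2^k}$, the processes $\{X_{s}\}_{s \geq t-2^k}$ and $\{\tilde X_{s}(t-2^k)\}_{s \geq t-2^k}$ are i.i.d.\ copies; in particular $\E[d(x,w)^p \mid X_{t-2^k}] = \E[d(x,z)^p \mid X_{t-2^k}]$, and analogously $\E[d(y,w)^p] = \E[d(y,z)^p]$ after also exploiting that $y$ lives before the branching time. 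Taking expectations turns the four-point inequality into an inequality between (sums of) expected $p$-th powers of distances along the original chain and a single term $\E[d(f(X_t),f(\tilde X_t(t-2^k)))^p]$ on the right.

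Finally, I would sum this inequality over $t \in \Z$ and $k \geq 1$ with weight $2^{-kp}$. The right-hand side contributes, up to the multiplicative constant $c$, exactly the double sum appearing on the left of Definition \ref{def:Markovdef}. The left-hand side collapses by a telescoping/convexity estimate: each expected distance $\E[d(f(X_{t-2^k}), f(X_{t-2^{k-1}}))^p]$ is bounded, via Jensen applied to the $2^{k-1}$ single-step increments, by $(2^{k-1})^{p-1} \sum_{s} \E[d(f(X_{s+1}),f(X_s))^p]$, and the geometric factor $2^{-kp}$ is enough to make the resulting $k$-sum converge uniformly, while a reindexing in $t$ shows it is bounded by an absolute multiple of $\sum_{t \in \Z} \E[d(f(X_{t+1}),f(X_t))^p]$. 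This yields $\Pi_p(\g) < \infty$.

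The only real obstacle in this plan — beyond careful bookkeeping of the telescoping and the independence arguments — was producing the four-point inequality in the first place, which is exactly what Lemma \ref{lem:convmetric} accomplishes; the remaining steps are essentially the same as those in the proof of Theorem \ref{thm:MNmain} for Banach spaces.
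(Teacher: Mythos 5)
Your overall plan matches the paper exactly: the paper's proof of Theorem~\ref{thm:upperboundmain} is a one-line reduction to Lemma~\ref{lem:convmetric} together with a citation of the proof of Proposition 2.1 in~\cite{MN}. However, your sketch of the Mendel--Naor step contains two genuine errors, and as written it would not close.

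First, the assignment of $z$ is off by one scale. With $y = f(X_{t-2^{k-1}})$ and $z = f(\tilde X_t(t-2^k))$, the point $y$ sits at time $t-2^{k-1}$, which is \emph{after} the branching time $t-2^k$ of the auxiliary chain $\tilde X_\cdot(t-2^k)$, not before it. Consequently $d(y,w)$ is a distance along a single chain while $d(y,z)$ straddles the fork, so $\E[d(y,w)^{p}]$ and $\E[d(y,z)^{p}]$ are in general unequal, and the same problem infects $d(x,w)$ versus $d(x,z)$ in a way that breaks the bookkeeping. The correct choice is $z = f(\tilde X_t(t-2^{k-1}))$, branching at the \emph{middle} time. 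Then conditioning on $X_{t-2^{k-1}}$ makes $X_t$ and $\tilde X_t(t-2^{k-1})$ i.i.d., which gives $\E[d(y,w)^p]=\E[d(y,z)^p]$ and $\E[d(x,w)^p]=\E[d(x,z)^p]$; the right-hand term becomes $\E[d(f(X_t),f(\tilde X_t(t-2^{k-1})))^p]$, and after the final reindexing $k \to k+1$ one recovers exactly the left-hand side of Definition~\ref{def:Markovdef}.

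Second, and independently, the convergence mechanism you describe is wrong. Bounding $\E[d(f(X_{t-2^k}),f(X_{t-2^{k-1}}))^{p}]$ by Jensen over $2^{k-1}$ single steps produces, after summing in $t$, a factor $(2^{k-1})^{p}$, which is then multiplied by the weight $2^{-kp}$: the product is $2^{-p}$ independent of $k$, so $\sum_k 2^{-kp}(2^{k-1})^{p}$ diverges. The geometric weight is \emph{not} enough to beat the Jensen loss. What actually makes the Mendel--Naor argument close is the exact telescoping: writing $\beta_k(t) := 2^{-kp}\E[d(f(X_t),f(X_{t-2^k}))^{p}]$, the four-point inequality (with the corrected assignment) yields, after summing in $t$, an inequality of the form $2\sum_t\beta_{k-1}(t) - 2\sum_t\beta_k(t) \geq c\,2^{-(k-1)p}\sum_t\E[d(f(X_t),f(\tilde X_t(t-2^{k-1})))^{p}]$. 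Here the negative terms $\bigl(\tfrac{d(x,w)}{2}\bigr)^{p},\bigl(\tfrac{d(x,z)}{2}\bigr)^{p}$ on the left of the four-point inequality become the $-2\sum_t\beta_k(t)$ term; they must be \emph{kept}, because they cancel the positive contributions at the next scale. Summing in $k$ then collapses to $2\sum_t\beta_0(t)$, which is precisely the sum of consecutive increments. Discarding the negative terms and invoking Jensen, as you propose, loses this cancellation and the estimate falls apart. (There is also a secondary point: iterating a quasi-triangle inequality $2^{k-1}$ times would produce an exponential-in-$k$ constant, so the Jensen route would require working with a genuine homogeneous metric rather than with $d_{N_r}$; the telescoping route avoids this entirely.)
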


\begin{proof}
Markov $p$-convexity is invariant under biLipschitz equivalence. Thus, we need only show $(\g,d_{N_r})$ is Markov $2p$-convex for all $p \geq r$, where $d_{N_r}$ is the quasi-metric from Lemma \ref{lem:convmetric}. The Markov convexity of $d_{N_r}$ follows from the 4-point inequality of Lemma \ref{lem:convmetric} and the proof of Proposition 2.1 in \cite{MN}.
\end{proof}

\section{Lower Bound on Markov Convexity of $J^{r-1}(\R)$}
The goal of this section is to prove Theorem \ref{thm:lowerboundmain}, which occurs at the conclusion. The strategy is to construct a sequence of directed graphs (see Definition \ref{def:graphs}) with bad Markov convexity properties. These bad properties are manifested by the dispersive nature of random walks on the graphs. This is the content of Lemma \ref{lem:badconvexity2}. We then map these graphs into $J^{r-1}(\R)$ with sufficient control over the distortion (Lemma \ref{lem:mapintojetspace}) to prove Theorem \ref{thm:lowerboundmain}.

\subsection{Directed Graphs and Random Walks}
Let $(N_m)_{m=0}^\infty$ be any sequence of integers with $N_0 = 0$ and $N_{m+1} \geq \max(1,N_m + \lceil 2\log_2(m+1) \rceil)$. We'll define a sequence of directed graphs $(\Gamma_m)_{m=0}^\infty$. The graphs will be directed from unique source vertex to unique and sink vertex, which we will denote by $0_m$ and $1_m$, respectively. Let diam$(\Gamma_m)$ be the number of edges in a directed edge path from $0$ to $1$, which is also equal to the diameter of $\Gamma_m$ with respect to the shortest path metric. The construction will be such that diam$(\Gamma_m) = 2^{N_m}$.

\begin{definition} \label{def:graphs}
We'll perform the construction and also prove that diam$(\Gamma_m) = 2^{N_m}$ by induction. Let $\Gamma_0$ be the interval $I$, that is, a graph with two vertices $0,1$ and a single edge connecting them, directed from $0$ to $1$. Suppose $\Gamma_m$ has been constructed for some $m \geq 0$. We define an intermediate graph $\Gamma_{m+1}'$ by gluing together $a := 2^{N_{m+1} - \lceil 2\log_2(m+1) \rceil - 1}$ copies of $I$, then $A:= 2^{N_{m+1}-N_m} - 2^{N_{m+1}-N_m- \lceil 2\log_2(m+1) \rceil} = 2^{-N_m}(2^{N_{m+1}}-2a) = 2^{N_{m+1}-N_m}(1-2^{-\lceil 2\log_2(m+1) \rceil})$ copies of $\Gamma_m$, then $a$ more copies of $I$ again together in series. The source vertex of this graph is the source vertex of the first copy of $I$, and the sink vertex is the sink vertex of the last copy of $I$. The diameter of this graph is
$$a \cdot \text{diam}(I) + A \cdot \text{diam}(\Gamma_m) + a \cdot \text{diam}(I) \overset{\text{ind hyp}}{=} 2a + 2^{N_m}A = 2^{N_{m+1}}$$
We then define $\Gamma_{m+1}$ to be two copies of $\Gamma_{m+1}'$, denoted $+\Gamma_{m+1}'$ and $-\Gamma_{m+1}'$, glued together in parallel. Denote the common source vertex $0_m$ and sink vertex $1_m$. The diameter of $\Gamma_{m+1}$ is the same as the diameter of $\Gamma_{m+1}'$. We note that each copy of $\Gamma_m$ in $\Gamma_{m+1}$ is isometrically embedded; any shortest path between two points in a copy of $\Gamma_m \sbs \Gamma_{m+1}$ completely belongs to $\Gamma_m$.

By swapping $+\Gamma_{m+1}'$ and $-\Gamma_{m+1}'$ in $\Gamma_{m+1}$, we obtain a directed graph involution $\iota: \Gamma_{m+1} \to \Gamma_{m+1}$.

For $q_1,q_2 \in \Gamma_m$, $(q_1,q_2)$ is called a \emph{vertical pair} if $d_m(q_1,0_m) = d_m(q_2,0_m)$.
\end{definition}

For each $m \geq 0$, let $(X^m_t)_{t=0}^{2^{N_m}}$ be the standard directed random walk on $\Gamma_m$. Let $d_m$ denote the shortest path metric on $\Gamma_m$. With full probability, $d(X^m_t,0_m) = t$ for $0 \leq t \leq 2^{N_m}$.

See the two right-hand graphs of Figure \ref{fig:Fm} for what $\Gamma_1$ and $\Gamma_2$ look like when $N_0 = 0$, $N_1 = 2$, and $N_2 = 4$. The graphs are drawn in such a way that the direction is from left to right, $+\Gamma_m'$ lies above the $x$-axis, and $-\Gamma_m'$ lies below the $x$-axis. The source vertices $0_m$ are both drawn at $(0,0)$, and the sink vertices $1_2$ are both drawn at $(1,0)$.

\begin{lemma} \label{lem:badconvexity1}
For all $p > 0$ and $m \geq 0$,
$$\sum_{k=0}^{N_m} \sum_{t=1}^{2^{N_m}} \frac{\E[d_m(X^m_t,\tilde{X}^m_t(t-2^k))^p]}{2^{kp}} \geq \frac{m}{8}2^{N_m}\Pi_{i=1}^{m-1}(1-(i+1)^{-2})$$
\end{lemma}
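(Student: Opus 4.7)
The plan is to extract, at each of $m$ distinct dyadic scales $k_1(t)<k_2(t)<\cdots<k_m(t)$, a contribution of size $\geq 1/2$ to the double sum for a suitable set of times $t$ of density at least $\prod_{i=1}^{m-1}(1-(i+1)^{-2})$; this yields the factor $m$. First, call $t\in\{1,\ldots,2^{N_m}\}$ \emph{good} if depth $t$ lies inside a fully nested chain $C_1\subset C_2\subset\cdots\subset C_{m-1}\subset \Gamma_m$ of copies, with $C_j$ a copy of $\Gamma_j$. Each $\Gamma_{i+1}'$ has its $A_{i+1}$ copies of $\Gamma_i$ covering fraction $1-2^{-\lceil 2\log_2(i+1)\rceil}\geq 1-(i+1)^{-2}$ of its length, so iterating at every scale yields a good set $T$ with $|T|\geq 2^{N_m}\prod_{i=1}^{m-1}(1-(i+1)^{-2})$. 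For $t\in T$ and $j\in\{1,\ldots,m\}$, write $d_j(t)$ for the source depth of $C_j$ and $\ell_j(t):=t-d_j(t)\in(0,2^{N_j})$.

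\textbf{Contribution at scale $j$.} Fix $t\in T$ and $j$, and assume $\ell_j(t)\leq 2^{N_j-1}$ (the case $\ell_j(t)>2^{N_j-1}$ is symmetric, using divergence at the sink side of $C_j$). Set $k_j(t):=\lceil\log_2\ell_j(t)\rceil$, so that $\ell_j\leq 2^{k_j(t)}<2\ell_j\leq 2^{N_j}$ and $t-2^{k_j(t)}\leq d_j(t)$. The two walks $X^m$ and $\tilde X^m_{\cdot}(t-2^{k_j(t)})$ agree at time $t-2^{k_j(t)}$ and then evolve independently; between this time and depth $d_j(t)$ they may diverge at sub-branchings of scales $j'<j$, but each such $\Gamma_{j'}$-copy is fully contained in the preceding $\Gamma_j$-copy (or initial $I$-region leading into $C_j$), so its sink sits at depth $\leq d_j(t)$ and any sub-divergence re-merges by then. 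Hence both walks arrive together at the source vertex of $C_j$ and independently choose $\pm$, landing in opposite $\pm\Gamma_j'$-branches of $C_j$ with probability exactly $1/2$. On this event, the isometric embedding $C_j\hookrightarrow\Gamma_m$ gives $d_m(X_t^m,\tilde X_t^m(t-2^{k_j(t)}))=2\min(\ell_j,2^{N_j}-\ell_j)=2\ell_j\geq 2^{k_j(t)}$, so the summand at $(k_j(t),t)$ is at least $\frac{1}{2}$. Since $N_{j+1}-N_j\geq\lceil 2\log_2(j+1)\rceil\geq 1$ for $j\geq 1$, the $k_j(t)$ lie in pairwise disjoint dyadic windows, so no double counting occurs when summing over $j$.

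\textbf{Assembly and main obstacle.} Combining, one gets
\[
\sum_{k=0}^{N_m}\sum_{t=1}^{2^{N_m}}\frac{\E[d_m(X_t^m,\tilde X_t^m(t-2^k))^p]}{2^{kp}}\geq \frac{m}{4}|T|\geq \frac{m}{4}\cdot 2^{N_m}\prod_{i=1}^{m-1}(1-(i+1)^{-2}),
\]
a bound stronger than the claim; the factor $\frac{1}{8}$ in the statement leaves slack for the case split between $\ell_j\leq 2^{N_j-1}$ and $\ell_j>2^{N_j-1}$, in which roughly half the levels $j$ are handled by each side. The main technical obstacle is the re-merging claim above: one must verify that every $\Gamma_{j'}$-copy ($j'<j$) whose branching is encountered on $(t-2^{k_j(t)},d_j(t)]$ has its sink at depth $\leq d_j(t)$, ensuring that divergences at smaller nested scales do not prevent the two walks from coinciding at the source of $C_j$. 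While this is a structural consequence of the recursive concatenation defining $\Gamma_m$, it requires a careful inductive unwinding of the nesting and a case check for whether $t-2^{k_j(t)}$ sits inside the preceding $\Gamma_j$-copy or in the initial $I$-region of the enclosing $\Gamma_{j+1}'$.
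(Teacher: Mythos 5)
Your overall mechanism (for each scale $j$, catch the two chains just before they make the $\pm$-choice at the $\Gamma_j$-copy containing $t$, using $k_j(t)=\lceil\log_2\ell_j(t)\rceil$) is correct for $\ell_j(t)\le 2^{N_j-1}$ and is genuinely the content of the paper's lemma once its induction is unwound. Your re-merging concern is in fact the part that works cleanly: since $N_{j+1}-N_j\ge\lceil 2\log_2(j+1)\rceil$ we have $a_{j+1}=2^{N_{j+1}-\lceil 2\log_2(j+1)\rceil-1}\ge 2^{N_j-1}\ge\ell_j$, so $t-2^{k_j(t)}>d_j(t)-\ell_j\ge d_{j+1}(t)$; both chains are already inside the same $\pm\Gamma_{j+1}'$ branch of $C_{j+1}$, and every copy strictly between time $t-2^{k_j(t)}$ and $d_j(t)$ has its sink at depth $\le d_j(t)$, forcing re-merging at the source of $C_j$. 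That part is fine.

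The genuine gap is the \emph{symmetric case and the accounting built on it}. When $\ell_j(t)>2^{N_j-1}$, there is no scale $k$ that simultaneously satisfies the two requirements you need: $2^k\ge\ell_j$ so that the chains agree before the branching of $C_j$, and $2^k\lesssim 2^{N_j}-\ell_j$ so that the displacement $2(2^{N_j}-\ell_j)$ beats $2^k$. For $\ell_j$ near $2^{N_j}$ these are flatly incompatible, and scale $j$ contributes essentially nothing for that $t$. Consequently your Plan statement — that \emph{each} $t\in T$ yields $m$ contributing scales $k_1(t)<\cdots<k_m(t)$ — is false. There exist good $t$ (e.g.\ sitting near the sink of every nested $C_j$) for which $\ell_j(t)>2^{N_j-1}$ for all $j$, and then your per-$t$ decomposition produces no $m$ factor at all. "Roughly half the levels $j$ are handled by each side" does not repair this: the bad side is handled by neither.

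The fix is to change the order of summation and count $(j,t)$ pairs rather than scales per fixed $t$: for each $j$, about half of the $t$'s lying in a full $\Gamma_j$-copy satisfy $\ell_j(t)\le 2^{N_j-1}$, the set of those $t$'s has size $\ge\tfrac{1}{2}\,2^{N_m}\prod_{i=j}^{m-1}(1-(i+1)^{-2})$, and for each such $t$ the pair $(k_j(t),t)$ contributes $\ge\tfrac12$; distinctness of these pairs across $j$ (your dyadic-window observation: $k_j\le N_j-1<N_j\le k_{j+1}$) then gives the bound after summing over $j$. This is precisely what the paper's induction is doing: the ``$0\le k\le N_{m+1}-1$, $1\le t\le 2^k$'' range is exactly the set of $(k,t)$ with $t$ in the first half of $\Gamma_{m+1}$ and the chains diverging immediately, while the ``$a+1\le t\le 2^{N_{m+1}}-a$, $0\le k\le N_m$'' range hands the nested scales to the inductive hypothesis. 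So your approach is not wrong in spirit, but as written the per-$t$ extraction of $m$ scales fails; rephrase the assembly as a per-$(j,t)$ count (or simply run the paper's induction, which automates exactly this bookkeeping).
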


\begin{proof}
Let $p \geq 1$. The proof is by induction on $m$. The base case $m = 0$ is trivially true. Assume the inequality holds for some $m \geq 0$. Now we consider the standard random walk $X^{m+1}_t$ on $\Gamma_{m+1}$. Consider $k$ and $t$ in the range $a + 1 \leq t \leq 2^{N_{m+1}} - a$, $0 \leq k \leq N_m$, where $a = 2^{N_{m+1}-\lceil 2\log_2(m+1) \rceil -1}$. Then $t - 2^k \geq 2^{N_{m+1}- \lceil 2\log_2(m+1) \rceil -1} + 1 - 2^{N_m} \geq 1$, so $X^{m+1}_1$ and $\tilde{X}^{m+1}_1(t-2^k)$ agree. Then for all subsequent times, with full probability, $X^{m+1}_t$ and $\tilde{X}^{m+1}_t(t-2^k)$ belong to the same copy of $\Gamma_{m+1}'$ in $\Gamma_{m+1}$. Then, after recalling the construction of $\Gamma_{m+1}'$ as a number of copies of $\Gamma_m$ and $I$ glued together, it can be seen that for the range of $t$ in interest, $X^{m+1}_t$ and $\tilde{X}^{m+1}_t(t-2^k)$ are standard random walks across $A = 2^{N_{m+1}-N_m}(1-2^{-\lceil 2\log_2(m+1) \rceil})$ consecutive copies of $\Gamma_m$, which we denote as $A \cdot X^{m+1}_t$ and $A \cdot \tilde{X}^{m+1}_t(t-2^k)$. Thus, under our assumptions on $k$ and $t$, $d_{m+1}(X^{m+1}_t,\tilde{X}^{m+1}_t(t-2^k))$ has the same distribution as $d_{m}(A \cdot X^{m}_t,A \cdot \tilde{X}^{m}_t(t-2^k))$. Hence we obtain by the inductive hypothesis
$$\sum_{k=0}^{N_m} \sum_{t=a+1}^{2^{N_{m+1}}-a} \frac{\E[d_{m+1}(X^{m+1}_t,\tilde{X}^{m+1}_t(t-2^k))^p]}{2^{kp}} = \sum_{k=0}^{N_m} \sum_{t=a+1}^{2^{N_{m+1}}-a} \frac{\E[d_{m}(A \cdot X^{m}_t,A \cdot \tilde{X}^{m}_t(t-2^k))^p]}{2^{kp}}$$
$$= \sum_{k=0}^{N_m} \sum_{T=1}^A \left(\sum_{t=a+(T-1)2^{N_m}+1}^{a+T2^{N_m}}\frac{\E[d_{m}(A \cdot X^{m}_t,A \cdot \tilde{X}^{m}_t(t-2^k))^p]}{2^{kp}}\right)$$
$$= \sum_{k=0}^{N_m} \sum_{T=1}^A \sum_{t=1}^{2^{N_m}} \frac{\E[d_{m}(X^m_t,\tilde{X}^m_t(t-2^k))^p]}{2^{kp}} \overset{\text{ind hyp}}{\geq} \sum_{T=1}^A \frac{m}{8}2^{N_m}\Pi_{i=1}^{m-1}(1-(i+1)^{-2})$$
$$= 2^{N_m}A\frac{m}{8}\Pi_{i=1}^{m-1}(1-(i+1)^{-2}) = 2^{N_{m+1}}(1-2^{-\lceil 2\log_2(m+1) \rceil})\frac{m}{8}\Pi_{i=1}^{m-1}(1-(i+1)^{-2})$$
$$\geq 2^{N_{m+1}}(1-(m+1)^{-2})\frac{m}{8}\Pi_{i=1}^{m-1}(1-(i+1)^{-2}) = \frac{m}{8}2^{N_{m+1}}\Pi_{i=1}^{m}(1-(i+1)^{-2})$$

In summary,
\begin{equation} \label{eq:badconvexity1}
\sum_{k=0}^{N_m} \sum_{t=a+1}^{2^{N_{m+1}}-a} \frac{\E[d_{m+1}(X^{m+1}_t,\tilde{X}^{m+1}_t(t-2^k))^p]}{2^{kp}} \geq \frac{m}{8}2^{N_{m+1}}\Pi_{i=1}^{m}(1-(i+1)^{-2})
\end{equation}

Now consider $k$ and $t$ in the range $0 \leq k \leq N_{m+1}-1$, $1 \leq t \leq 2^k$, so that $t-2^k \leq 0$. Note that this means this range is disjoint from the one previously considered. Since $t-2^k \leq 0$, the random walks $X^{m+1}$ and $\tilde{X}^{m+1}(t-2^k)$ evolved independently immediately. Thus, with probability 1/2, $X^{m+1}$ and $\tilde{X}^{m+1}(t-2^k)$ belong to different copies of $\Gamma_{m+1}'$ in $\Gamma_{m+1}$. This implies that, with probability 1/2, $d_{m+1}(X^{m+1}_t,\tilde{X}^{m+1}_t(t-2^k)) = 2t$. Thus,
$$\sum_{k=0}^{N_{m+1}-1} \sum _{t=1}^{2^k} \frac{\E[d_{m+1}(X^{m+1}_t,\tilde{X}^{m+1}_t(t-2^k))^p]}{2^{kp}} \geq \sum_{k=0}^{N_{m+1}-1} \sum _{t=1}^{2^k} \frac{(2t)^p}{2^{kp+1}}$$
$$\overset{\text{Lem }\ref{lem:psum}}{>} \sum_{k=0}^{N_{m+1}-1} \frac{2^{k(p+1)}}{2^{kp+2}} = \sum_{k=0}^{N_{m+1}-1} 2^{k-2} = 2^{N_{m+1}-2} - \frac{1}{4} \geq \frac{1}{8}2^{N_{m+1}}$$
In summary,
\begin{equation} \label{eq:badconvexity2}
\sum_{k=0}^{N_{m+1}-1} \sum _{t=1}^{2^k} \frac{\E[d_{m+1}(X^{m+1}_t,\tilde{X}^{m+1}_t(t-2^k))^p]}{2^{kp}} > \frac{1}{8}2^{N_{m+1}}
\end{equation}

Again, notice that in \eqref{eq:badconvexity1} and \eqref{eq:badconvexity2}, the range of $t$, $k$ we consider are disjoint from each other and are subsets of the range $0 \leq k \leq N_{m+1}$, $1 \leq t \leq 2^{N_{m+1}}$. Thus, by adding \eqref{eq:badconvexity1} and \eqref{eq:badconvexity2}, we obtain
$$\sum_{k=0}^{N_{m+1}} \sum_{t=1}^{2^{N_{m+1}}} \frac{\E[d_m(X^m_t,\tilde{X}^m_t(t-2^k))^p]}{2^{kp}} > \frac{m}{8}2^{N_{m+1}}\Pi_{i=1}^{m}(1-(i+1)^{-2}) + \frac{1}{8}2^{N_{m+1}}$$
$$> \left(\frac{m+1}{8}\right)2^{N_{m+1}}\Pi_{i=1}^{m}(1-(i+1)^{-2})$$
completing the inductive step.
\end{proof}

\begin{lemma} \label{lem:badconvexity2}
$$\sum_{k=0}^{\infty} \sum_{t=1}^{2^{N_m}} \frac{\E[d_m(X^m_t,\tilde{X}^m_t(t-2^k))^p]}{2^{kp}} \gtrsim m2^{N_m}$$
for all $p > 0$.
\end{lemma}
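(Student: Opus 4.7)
The plan is to derive Lemma \ref{lem:badconvexity2} as a nearly immediate consequence of Lemma \ref{lem:badconvexity1}, via two elementary observations. First, because every summand in the infinite sum is nonnegative,
$$\sum_{k=0}^{\infty} \sum_{t=1}^{2^{N_m}} \frac{\E[d_m(X^m_t,\tilde{X}^m_t(t-2^k))^p]}{2^{kp}} \;\geq\; \sum_{k=0}^{N_m} \sum_{t=1}^{2^{N_m}} \frac{\E[d_m(X^m_t,\tilde{X}^m_t(t-2^k))^p]}{2^{kp}},$$
so Lemma \ref{lem:badconvexity1} applies to bound the right-hand side from below by $\tfrac{m}{8} 2^{N_m} \prod_{i=1}^{m-1}(1-(i+1)^{-2})$.

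Second, the remaining $m$-dependent factor must be bounded below by a positive universal constant. I would recognize the product as telescoping: reindexing with $n = i+1$,
$$\prod_{i=1}^{m-1}\!\bigl(1-(i+1)^{-2}\bigr) \;=\; \prod_{n=2}^{m} \frac{(n-1)(n+1)}{n^2} \;=\; \frac{m+1}{2m} \;\geq\; \frac{1}{2}$$
for every $m \geq 2$ (and the empty product equals $1$ when $m \in \{0,1\}$, so the bound trivially persists). Combining the two observations,
$$\sum_{k=0}^{\infty} \sum_{t=1}^{2^{N_m}} \frac{\E[d_m(X^m_t,\tilde{X}^m_t(t-2^k))^p]}{2^{kp}} \;\geq\; \frac{m}{16}\, 2^{N_m},$$
which is the asserted bound (with an implicit constant that is independent of both $p$ and $m$).

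There is no real obstacle: all of the analytical work, including the delicate induction that keeps track of the decoupling of independent walks across separate copies of $\Gamma_{m+1}'$ and the boost from the independent-branches contribution, already took place inside Lemma \ref{lem:badconvexity1}. The only minor point worth highlighting is the explicit telescoping evaluation of the product; without it one might worry that the bound degrades with $m$, but in fact $\prod_{i=1}^{\infty}(1-(i+1)^{-2}) = 1/2$, so the $m$-independent constant $1/16$ suffices uniformly.
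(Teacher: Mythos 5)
Your proof is correct and follows essentially the same route as the paper: drop the terms with $k > N_m$ (all summands are nonnegative), invoke Lemma \ref{lem:badconvexity1}, and bound the product $\prod_{i=1}^{m-1}(1-(i+1)^{-2})$ away from zero. The only difference is cosmetic — the paper simply observes that the infinite product $\prod_{i=1}^{\infty}(1-(i+1)^{-2})$ converges to a positive number, whereas you evaluate the telescoping product explicitly as $\frac{m+1}{2m}$; your computation is correct and gives the sharper constant $1/16$.
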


\begin{proof}
This follows from Lemma \ref{lem:badconvexity1} and the fact that $\Pi_{i=1}^{m-1}(1-(i+1)^{-2}) > \Pi_{i=1}^{\infty}(1-(i+1)^{-2}) > 0$ for all $m \geq 0$.
\end{proof}

\subsection{Mapping the Graphs into $J^{r-1}(\R)$} \label{ss:lowerbound}
\begin{lemma} \label{lem:phidef}
There exists $\phi \in C^{r-1,1}([0,1])$ such that
\begin{enumerate}
\item \label{phidef1} $\phi$ is symmetric across the line $x = \frac{1}{2}$, that is, $\phi(x) = \phi(1-x)$ for all $x \in [0,\frac{1}{2}]$.
\item \label{phidef2} $\phi(x) \geq (2x)^r$ for all $x \in [0,\frac{1}{2}]$.
\item \label{phidef3} $[j^{r-1}(0)](\phi) = (0,0)$, and thus by (\ref{phidef1}), $[j^{r-1}(1)](\phi) = (1,0)$.
\item \label{phidef4} For every integer $0 \leq i < 2^r$ and every $x \in [i2^{-r},(i+1)2^{-r})$, $\phi^{(r)}(x) = \phi^{(r)}(i2^{-r})$ (so $\phi^{(r)}$ is constant on intervals of this form).
\end{enumerate}
Since $\phi \in C^{r-1,1}([0,1])$, $\phi^{(r)} \in L^\infty([0,1])$. We also remark here that whenever dealing with $L^\infty$ functions, we choose representatives that are everywhere (not just almost everywhere) bounded by their norm.
\end{lemma}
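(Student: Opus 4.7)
My plan is to construct $\phi$ on $[0, 1/2]$ as a spline of degree $r$ with knots at $\{i 2^{-r}\}_{i=0}^{2^{r-1}}$, and then extend to $[1/2, 1]$ by reflection $\phi(x) := \phi(1-x)$. On $[0, 1/2]$, I parametrize $\phi$ by the $2^{r-1}$ values $v_i := \phi^{(r)}|_{[i 2^{-r}, (i+1) 2^{-r})}$ together with initial conditions $\phi^{(k)}(0) = 0$ for $0 \le k \le r-1$; this immediately yields $\phi \in C^{r-1,1}[0, 1/2]$, property (\ref{phidef3}) at $x=0$, and property (\ref{phidef4}) on $[0, 1/2]$.

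For the reflection to yield a $C^{r-1,1}$ function on $[0,1]$, the matching conditions $\phi^{(k)}(1/2) = 0$ for odd $k \in \{1, 3, \ldots, r-1\}$ must hold, since $\phi^{(k)}(1/2^+) = (-1)^k \phi^{(k)}(1/2^-)$ for the reflected extension. These impose $\lfloor r/2 \rfloor$ linear equations on $v$, cutting out a subspace $V \sbs \R^{2^{r-1}}$ of dimension at least $2^{r-1} - \lfloor r/2 \rfloor \ge 1$ for all $r \ge 1$. For any $v \in V$, property (\ref{phidef1}) is built into the construction; property (\ref{phidef4}) on $[1/2, 1]$ is inherited from the reflected step structure by suitably choosing the $L^\infty$ representative of $\phi^{(r)}$ at the breakpoints; and the jet condition at $x=1$ in (\ref{phidef3}) follows from $\phi^{(k)}(1) = (-1)^k \phi^{(k)}(0) = 0$.

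The essential task is to choose $v \in V$ ensuring the lower bound (\ref{phidef2}). I would guide the choice by the smooth symmetric polynomial $\tilde\phi(x) := C(x(1-x))^r$, which satisfies (\ref{phidef1})--(\ref{phidef3}) and, for $C$ sufficiently large, dominates $(2x)^r$ with room to spare on $[0, 1/2]$. Sampling $v_i := \tilde\phi^{(r)}(i 2^{-r})$ produces a spline $\phi_v$ with $\|\phi_v^{(r)} - \tilde\phi^{(r)}\|_\infty = O(C \cdot 2^{-r})$; since $\phi_v - \tilde\phi$ vanishes to order $r$ at $0$, iterated integration sharpens this to the pointwise estimate $|\phi_v(x) - \tilde\phi(x)| \lesssim C \cdot 2^{-r} \cdot x^r$ on $[0, 1/2]$. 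Combined with $\tilde\phi(x) \ge C \cdot x^r/2^r$ on $[0, 1/2]$, this gives $\phi_v(x) \ge (2x)^r$ for $C$ large enough. The sampled $v$ typically lies outside $V$, but the symmetry of $\tilde\phi$ forces $\tilde\phi^{(k)}(1/2) = 0$ for odd $k$, so $\phi_v^{(k)}(1/2) = O(C \cdot 2^{-r})$; a correction $\Delta v$ of the same order, obtained by solving the $\lfloor r/2 \rfloor$-dimensional linear system, projects $v$ into $V$ while perturbing $\phi_v$ by only $O(C \cdot 2^{-r} \cdot x^r)$ pointwise (again by the vanishing-to-order-$r$ argument applied to $\Delta v$), which preserves the lower bound.

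The main obstacle is showing that the linear constraint map $v \mapsto (\phi_v^{(k)}(1/2))_{k \text{ odd}}$ has a bounded right inverse, so that the correction $\Delta v$ can be controlled as claimed. This reduces to verifying surjectivity of a fixed linear map $\R^{2^{r-1}} \to \R^{\lfloor r/2 \rfloor}$ depending only on $r$; I expect this nondegeneracy to follow from a direct computation on a suitable basis of the spline space (e.g.\ truncated-power functions $(x - i 2^{-r})_+^r$), after which the operator norm of the pseudo-inverse is just an $r$-dependent constant that can be absorbed into the choice of $C$.
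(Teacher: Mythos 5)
Your construction takes a genuinely different route from the paper: you build $\phi|_{[0,1/2]}$ as a degree-$r$ spline whose $r$-th derivative is piecewise constant on the dyadic intervals, sample a smooth reference function $\tilde\phi(x) = C(x(1-x))^r$, and then project the sample into the subspace enforcing the odd-order derivative conditions at $x=1/2$; the paper instead constructs $\phi$ by a short induction on $r$ (compress the step-$r$ function to $[0,1/2]$, extend oddly, integrate, and rescale by exactly $2^{r+2}$). Your overall architecture and dimension count are fine, but the quantitative step needed for the lower bound (\ref{phidef2}) does not close, and the failure cannot be repaired by ``taking $C$ large enough.''

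The issue is a scaling problem. Both the margin in the lower bound $\tilde\phi(x) \ge C\, 2^{-r} x^r$ and your approximation error $|\phi_v(x)-\tilde\phi(x)| \le K_r\, C\, 2^{-r} x^r$ are linear in $C$, so increasing $C$ does not change their ratio. The implicit constant $K_r$ coming from $\|\tilde\phi^{(r+1)}\|_{\infty}\,2^{-r}\,x^r/r!$ is in fact $\ge r(r+1)$: since the $x^{r+1}$-coefficient of $(x(1-x))^r$ is $-r$, one has $|\tilde\phi^{(r+1)}(0)|/C = r\,(r+1)!$, giving $K_r \ge r(r+1)! / r! = r(r+1) \ge 2$. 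Your estimate therefore only yields $\phi_v(x) \ge (1 - K_r)C\,2^{-r}x^r$, which is negative for every $r\ge 1$ and every $C>0$. (For $r\le 2$ one can salvage the uncorrected sampled spline via a sign argument — $\tilde\phi^{(r)}$ is monotone decreasing on $[0,1/2]$, so left-endpoint sampling gives $\phi_v^{(r)}\ge\tilde\phi^{(r)}$ and hence $\phi_v\ge\tilde\phi$ — but $\tilde\phi^{(r+1)}$ changes sign on $[0,1/2]$ already for $r=3$, and in any case this is not the argument you wrote.) The correction step inherits the same flaw: $\Delta v$ is linear in $C$, so its effect on $\phi_v$ is as well, and you would need the operator norm of the pseudoinverse of the constraint map, divided by $r!$ and combined with the sampling constant, to come out strictly less than $1$; you acknowledge this as an open computation, but it is not a peripheral detail — it is the whole ballgame. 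The paper's inductive proof avoids this constant-chasing entirely: the bound $\phi(x)\ge (2x)^r$ passes through one antiderivative with an explicit loss factor of $2^{r-1}/(r+1)$, which the fixed rescaling $2^{r+2}$ absorbs with no slack to track.
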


\begin{proof}
The proof is by induction on $r$. For the base case $r = 1$, define
$$\phi(x) := \left\{\begin{matrix} 2x & x \in [0,\frac{1}{2}] \\ 2-2x & x \in [\frac{1}{2},1] \end{matrix}\right.$$
$\phi$ satisfies (\ref{phidef1}) - (\ref{phidef4}).

Now suppose such a function $\phi$ exists for some $r \geq 1$. We'll construct a function $\psi$ that satisfies (\ref{phidef1}) - (\ref{phidef4}) for $r+1$. Define $\overline{\phi} \in C^{r-1,1}([0,1])$ by
$$\overline{\phi}(x) := \left\{\begin{matrix} \phi(2x) & x \in [0,\frac{1}{2}] \\ -\phi(2-2x) & x \in [\frac{1}{2},1] \end{matrix}\right.$$
Then define $\overline{\Phi} \in C^{r,1}([0,1])$ by
$$\overline{\Phi}(x) := \int_0^x \overline{\phi}(\xi) d\xi$$
$\overline{\Phi}$ satisfies (\ref{phidef1}), (\ref{phidef3}), and (\ref{phidef4}) by the inductive hypothesis. Note that the inductive hypothesis applied to (\ref{phidef2}) implies $\overline{\phi}(x) \geq 2^r(2x)^r$ for every $x \in [0,\frac{1}{4}]$, and hence
$$\overline{\Phi}(x) \geq \frac{2^{r-1}}{r+1}(2x)^{r+1} \geq \frac{1}{2}(2x)^{r+1}$$
Also, since $\phi \geq 0$, (which follows from the inductive hypothesis applied to (\ref{phidef1}) and (\ref{phidef2})),
$$\overline{\Phi}(x) \geq \overline{\Phi}\left(\frac{1}{4}\right) \geq \left(\frac{1}{2}\right)^{r+2}$$
for all $x \in [\frac{1}{4},\frac{1}{2}]$. Together, these two inequalities imply
$$\psi(x) := 2^{r+2}\overline{\Phi}(x) \geq (2x)^{r+1}$$
for all $x \in [0,\frac{1}{2}]$. Thus, $\psi$ satisfies (\ref{phidef1})-(\ref{phidef4}), completing the inductive step.
\end{proof}

See Figure \ref{fig:phi} for graphs of $\phi$ and its first two derivatives when $r = 3$. Note that these graphs are not on the same scale.

\begin{figure}
\includegraphics[scale=.425]{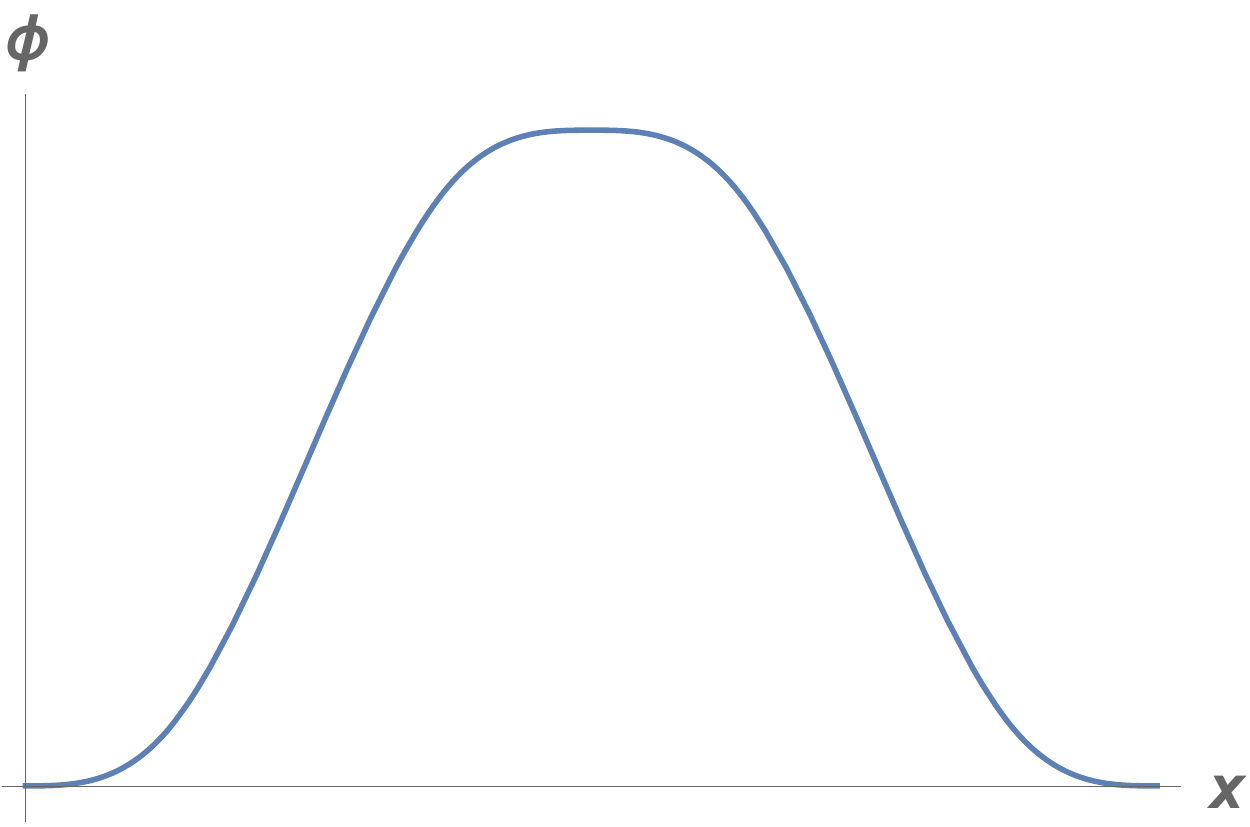}
\includegraphics[scale=.425]{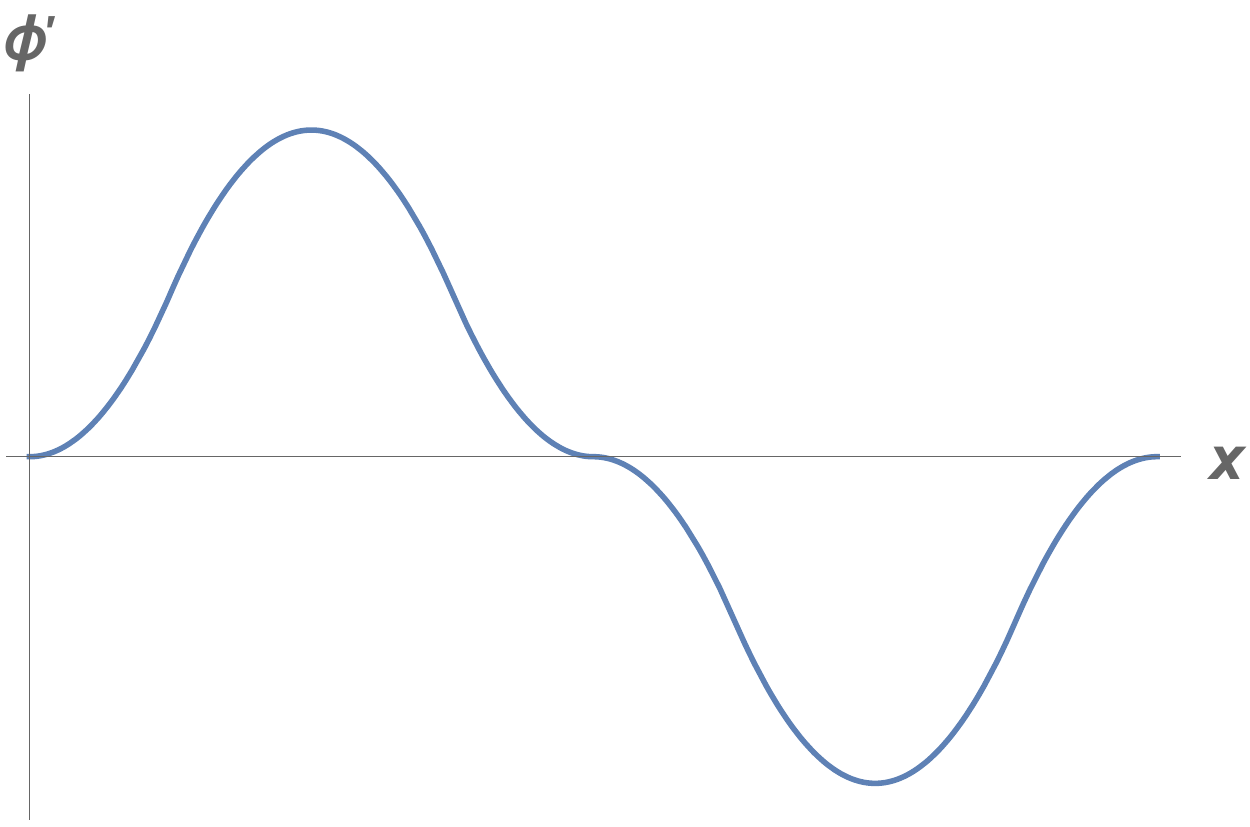}
\includegraphics[scale=.425]{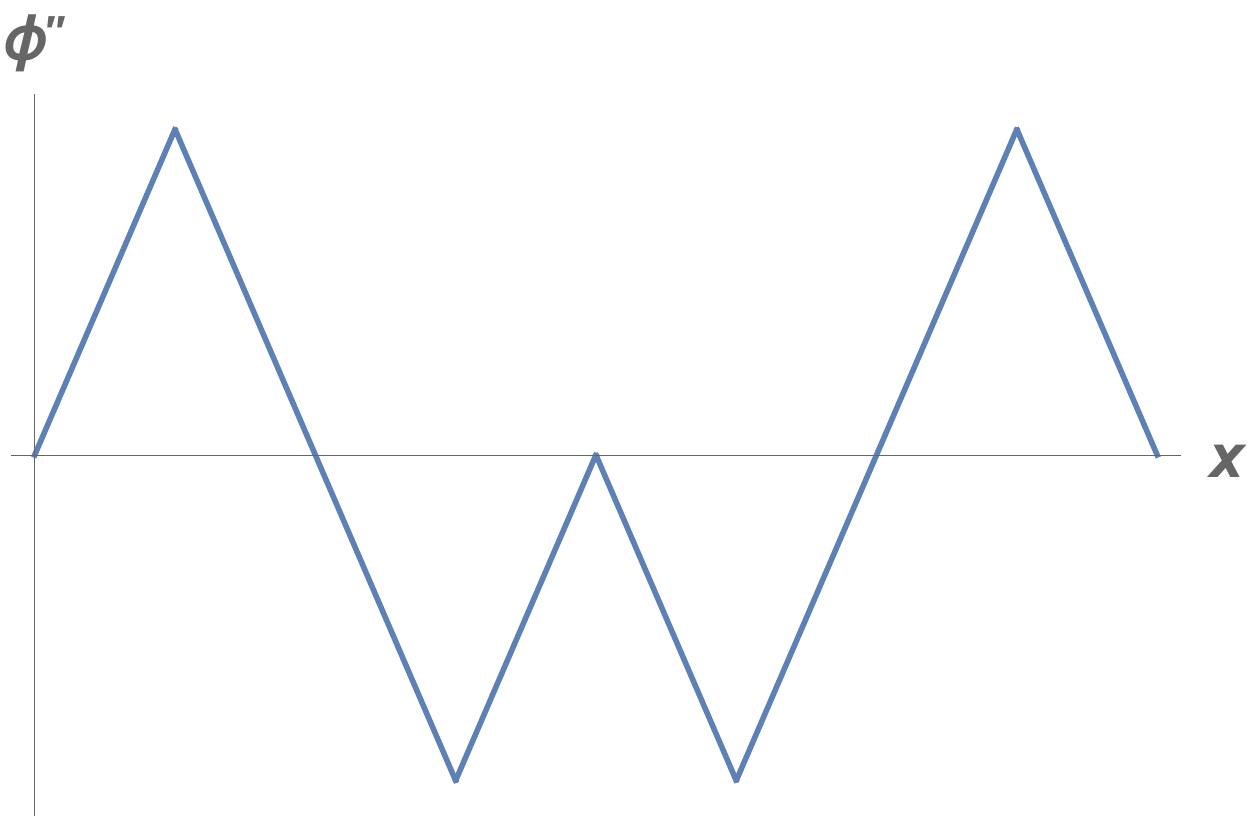}
\caption{Graphs of the function $\phi$ from Lemma \ref{lem:phidef} and its first two derivatives when $r = 3$. Note that these are not shown to the same scale.}
\label{fig:phi}
\end{figure}

\begin{lemma} \label{lem:mapintojetspace}
Let $\phi$ be the function from Lemma \ref{lem:phidef}. Set $N_0 = 0$, and for $m \geq 1$, set $N_m := \lceil Cm\log_2(m+1) \rceil$, where $C$ is a sufficiently large constant to be chosen later, so that $N_m \geq r$ and $N_{m+1} \geq \max(1,N_m + \lceil 2\log_2(m+1) \rceil)$. Then there exists a sequence of maps $F_m: \Gamma_m \to J^{r-1}(\R)$ such that, for all $m \geq 0$ and all directed paths $\gamma$ from $0_m$ to $1_m$ in $\Gamma_m$, there is a function $\phi_\gamma \in C^{r-1,1}([0,2^{N_m}])$ such that
\begin{enumerate}
\item \label{mapintojetspace1} $[j^{r-1}(0)](\phi_\gamma) = (0,0)$ and $[j^{r-1}(2^{N_m})](\phi_\gamma) = (2^{N_m},0)$.
\item \label{mapintojetspace2} After isometrically identifying $\gamma$ with $[0,2^{N_m}]$ via $q \mapsto d_m(q,0_m)$, $F_m$ restricted to $\gamma$ equals the jet of $\phi_\gamma$; $F_m(t) = [j^{r-1}(t)](\phi_\gamma)$.
\item \label{mapintojetspace3} For all vertical pairs $(q_1,q_2) \in \Gamma_m \cross \Gamma_m$,
$$\sqrt{m}\ln(m+1)|\pi_{0}(F_m(q_1)) - F_m(q_2))| \geq d_m(q_1,q_2)^r$$
\item \label{mapintojetspace4} Let $\gamma(X^m)$ denote the directed path followed by the random walk $X^m$ (so $\gamma(X^m)$ is itself a path-valued random variable). For all $y \in \R$, and $0 \leq t < 2^{N_m}$,
$$\E\left[\exp\left(y\left(\sup_{[t,t+1]} \phi^{(r)}_{\gamma(X^m)}\right)\right)\right] \leq \exp\left(\frac{y^2}{2}\left\|\phi^{(r)}\right\|_\infty^2\sum_{n=1}^{m} \frac{1}{n\ln(n+1)^2}\right)$$
and
$$\E\left[\exp\left(y\left(\inf_{[t,t+1]} \phi^{(r)}_{\gamma(X^m)}\right)\right)\right] \leq \exp\left(\frac{y^2}{2}\left\|\phi^{(r)}\right\|_\infty^2\sum_{n=1}^{m} \frac{1}{n\ln(n+1)^2}\right)$$
and thus there exists a constant $B < \infty$ (not depending on $y$, $t$, or $m$) such that
$$\E\left[\exp\left(y\left\|\phi^{(r)}_{\gamma(X^m)}\right\|_{L^\infty[t,t+1]}\right)\right] \leq 2e^{By^2}$$
\item \label{mapintojetspace5} $\|\phi^{(r)}_\gamma\|_\infty \leq 2\sqrt{m} \|\phi^{(r)}\|_\infty$.
\item \label{mapintojetspace6} $\|\pi_0 \comp F_m\|_\infty \leq 2^{r}(m+1)^{Crm+1}\|\phi\|_\infty$.
\end{enumerate}
\end{lemma}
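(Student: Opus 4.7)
The plan is to prove the lemma by induction on $m$, recursively constructing $\phi_\gamma$ and setting $F_m(q) := [j^{r-1}(d_m(q, 0_m))](\phi_\gamma)$ for any directed path $\gamma$ through $q$; this definition is unambiguous because each directed vertex in $\Gamma_m$ is reached by a unique directed segment from $0_m$, and $\phi_\gamma^{(j)}(d_m(q, 0_m))$ for $j \leq r-1$ depends only on that initial segment. The base case $m = 0$ is immediate: take $\phi_\gamma \equiv 0$, giving $F_0(t) = (t, 0, \ldots, 0)$.

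For the inductive step, I would exploit the series-parallel structure of $\Gamma_{m+1}$: a directed path $\gamma$ lies in exactly one of $\pm\Gamma'_{m+1}$ with sign $\eps_\gamma \in \{+1,-1\}$, and decomposes into $a$ initial copies of $I$, then $A$ consecutive copies of $\Gamma_m$ traversed by sub-paths $\gamma_1, \ldots, \gamma_A$, then $a$ trailing copies of $I$. Writing $t_k^{\text{start}}$ for the time at which $\gamma$ enters the $k$-th copy of $\Gamma_m$, I would define
\begin{equation*}
\phi_\gamma^{(r)}(t) := \sum_{k=1}^A \phi_{\gamma_k}^{(r)}(t - t_k^{\text{start}})\, \mathbf{1}_{[t_k^{\text{start}}, t_k^{\text{start}} + 2^{N_m}]}(t) + \eps_\gamma\, c_{m+1}\, \phi^{(r)}\!\left(\frac{t}{2^{N_{m+1}}}\right),
\end{equation*}
with $c_{m+1} := (\sqrt{m+1}\log_2(m+2))^{-1}$, and let $\phi_\gamma$ be the $r$-fold antiderivative with all lower derivatives vanishing at $0$. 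Property (1) at $t = 2^{N_{m+1}}$ then follows from the endpoint conditions on $\phi$ (Lemma \ref{lem:phidef}(\ref{phidef1}),(\ref{phidef3})) plus the inductive hypothesis; (2) is by construction; (5) follows from the disjoint supports of the $\phi_{\gamma_k}^{(r)}$ together with the inequality $c_{m+1} \leq 2(\sqrt{m+1} - \sqrt{m})$; and (6) follows by integrating $r$ times, picking up a factor $\leq 2^{rN_{m+1}}$ that grows like $(m+2)^{Cr(m+1)}$.

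The main technical step is property (3). For a vertical pair $(q_1, q_2)$ at time $t$, let $i$ be the largest scale at which their unique directed segments from $0_{m+1}$ traverse different $\pm$-copies. If $i \leq m$, both $q_j$ lie in a common isometrically embedded copy of $\Gamma_m \subseteq \Gamma_{m+1}$, and the inductive hypothesis combined with monotonicity of $m \mapsto \sqrt{m}\log_2(m+1)$ yields the conclusion. If $i = m+1$, the difference $\phi_{\gamma_1}(t) - \phi_{\gamma_2}(t)$ splits as a dominant scale-$(m+1)$ contribution $2\eps_\gamma\, c_{m+1}\, 2^{rN_{m+1}}\phi(t/2^{N_{m+1}})$ plus errors from lower scales. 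Using Lemma \ref{lem:phidef}(\ref{phidef2}) and the symmetry of $\phi$, together with $d_{m+1}(q_1, q_2) = 2\min(t, 2^{N_{m+1}} - t)$, the main term is at least $d_{m+1}(q_1, q_2)^r / (\sqrt{m+1}\log_2(m+2))$; the lower-scale error is bounded by $2\|\pi_0 \circ F_m\|_\infty$ via (6), and for $C$ sufficiently large in $N_m = \lceil Cm\log_2(m+1)\rceil$ the super-exponential growth of $2^{rN_{m+1}}$ absorbs this error.

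The hardest step will be (4). For $C$ chosen so that $N_i > r$ for all $i \geq 1$, each $\phi^{(r)}(\cdot/2^{N_i})$ is constant on every integer unit interval $[t, t+1]$ by Lemma \ref{lem:phidef}(\ref{phidef4}); unfolding the recursive definition then writes
\begin{equation*}
\phi^{(r)}_{\gamma(X^{m+1})}(s) = \sum_{i=1}^{m+1} \eps_i\, c_i\, \phi^{(r)}\!\left(\frac{s - \mathrm{offset}_i}{2^{N_i}}\right), \qquad s \in [t, t+1],
\end{equation*}
where the offsets depend on the walk's trajectory and the $\eps_i \in \{+1,-1\}$ are the independent Rademachers recording the walk's $\pm$-choice at each scale. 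Conditioning on the offsets, the $\eps_i$ remain mutually independent, so the conditional MGF factorizes; Lemma \ref{lem:coshexp} yields the per-scale bound $\exp(y^2 c_i^2 \|\phi^{(r)}\|_\infty^2/2)$, which multiplied over $i$ gives the claimed variance $\sum_{n=1}^{m+1} 1/(n\log(n+1)^2)$, and the tower property removes the conditioning. Lemma \ref{lem:subgaussian} then converts this into the stated $L^p$ bound. The main obstacle lies in this last step: rigorously justifying the mutual independence of the $\eps_i$ across all scales visited by $X^{m+1}$ by time $t$, and separately, verifying in (3) that the polynomial-in-$m$ error from all lower scales is indeed dwarfed by a single factor of $2^{r(N_{m+1} - N_m)}$ uniformly in $m$.
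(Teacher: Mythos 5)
Your construction and case analysis match the paper's: you recursively build $\phi_\gamma$ by superposing a rescaled copy of $\phi$ (the paper's $\tilde\phi$) at scale $m{+}1$ with the inductively given $\phi_{\gamma_k}$ from the $\Gamma_m$-copies, verify (1), (2), (5), (6) directly, split (3) into the same-copy and opposite-copy cases with the leading-vs.-buffer $t$-ranges, and prove (4) via a Rademacher MGF bound using $\cosh(y) \le e^{y^2/2}$ and subgaussian-to-$L^p$ conversion. The one place you diverge is in the mechanism for (4): you propose to fully unfold the recursion to exhibit $\phi^{(r)}_{\gamma(X^{m+1})}$ on $[t,t+1]$ as a sum of Rademachers over all active scales and then factor the conditional MGF, and you flag the independence bookkeeping as the main obstacle. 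The paper avoids this entirely by conditioning only on the scale-$(m{+}1)$ sign and invoking the inductive hypothesis (for both $\sup$ and $\inf$, which is why the lemma asserts both MGF bounds) to handle all lower scales in one stroke; that is strictly cleaner and is worth adopting. Two small points: your normalizing constant should use $\ln$ rather than $\log_2$ to match the lemma's statement of (3) and (4), and your sign convention for $-\Gamma'_{m+1}$ keeps the lower-scale terms positive while the paper negates the whole $\phi_{\iota(\gamma)}$—both work for the estimates, but you should state this explicitly when checking well-definedness at the shared endpoints.
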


\begin{proof}
The proof is by induction on $m$. The base case $m=0$ is easy, we simply define $F_0$ to be the jet of the 0 function on $\Gamma_0 = I$. Then (\ref{mapintojetspace1}) - (\ref{mapintojetspace6}) hold. Assume such a sequence of maps $F_0, \dots F_m$ exist for some $m \geq 0$. Set
\begin{equation} \label{eq:Kdef}
K := \|\pi_0 \comp F_m\|_\infty
\end{equation}
Since $N_{m+1} \geq C(m+1)\log_2(m+2)$, we may (and do) choose $C$ sufficiently large so that
\begin{equation} \label{eq:Nmdef}
K \overset{\text{ind hyp }(\ref{mapintojetspace6})}{\leq} \|\phi\|_\infty 2^{r}(m+1)^{Crm+1} \leq \frac{2^{r(N_{m+1}- \lceil 2\log_2(m+1) \rceil -1)-1}}{\sqrt{m+1}\ln(m+2)}
\end{equation}

Define $\tilde{\phi} \in C^{r-1,1}([0,2^{N_{m+1}}])$ by
$$\tilde{\phi}(x) := \frac{2^{rN_{m+1}}}{\sqrt{m+1}\ln(m+2)}\phi(2^{-N_{m+1}}x)$$
Note that since $N_{m+1} \geq r$, Lemma \ref{lem:phidef}(\ref{phidef4}) tells us:
\begin{equation} \label{eq:phitildeconst}
\tilde{\phi}^{(r)}(x) = \tilde{\phi}^{(r)}(i)
\end{equation}
for every integer $0 \leq i < 2^{N_m}$ and every $x \in [i,i+1)$. We also have by the chain rule
\begin{equation} \label{eq:phitilder}
\left\|\tilde{\phi}^{(r)}\right\|_\infty = \frac{\left\|\phi^{(r)}\right\|_\infty}{\sqrt{m+1}\ln(m+2)}
\end{equation}
and additionally
\begin{equation} \label{eq:phitilde0}
\left\|\tilde{\phi}\right\|_\infty \leq 2^{rN_{m+1}}\|\phi\|_\infty \leq 2^{r(C(m+1)\log_2(m+2)+1)}\|\phi\|_\infty = 2^{r}(m+2)^{Cr(m+1)}\|\phi\|_\infty
\end{equation}

We will now define the function $F_{m+1}$ on $\Gamma_{m+1} = +\Gamma_{m+1}' \cup -\Gamma_{m+1}'$. Let us first work with $+\Gamma_{m+1}'$. Let $\gamma$ be a directed path from $0_m$ to $1_m$ in $+\Gamma_{m+1}'$. Then by definition of $+\Gamma_{m+1}'$, $\gamma$ consists of $a = 2^{N_{m+1}- \lceil 2\log_2(m+1) \rceil -1}$ copies of $I$, then $A = 2^{-N_m}(2^{N_{m+1}}-2a)$ copies of different directed paths $\gamma_i$, $1\leq i \leq A$, each belonging to $\Gamma_m$ and connecting $0_m$ to $1_m$, then $a$ more copies of $I$ glued together in series. Identify $\gamma$ isometrically with $[0,2^{N_{m+1}}]$ via $q \mapsto d_{m+1}(q,0_{m+1})$. Under this identification, the first set of copies of $I$ gets identified with the subinterval $[0,a]$, each $\gamma_i$ gets identified with the subinterval $[a+(i-1)2^{N_m},a+i2^{N_m}]$, and the last set of copies of $I$ gets identified with the subinterval $[2^{N_{m+1}}-a,2^{N_{m+1}}]$. We then define
\begin{equation} \label{eq:Fdef1}
\phi_\gamma := \tilde{\phi} + f_\gamma
\end{equation}
where $f_\gamma$ is defined as follows: $f_\gamma$ is identically $0$ on $[0,a] \cup [2^{N_{m+1}}-a,2^{N_{m+1}}]$, and $f_\gamma(x) = \phi_{\gamma_i}(x-a-(i-1)2^{N_m})$ on $[a+(i-1)2^{N_m},a+i2^{N_m}]$ ($\phi_{\gamma_i}$ is given to us by the inductive hypothesis). By the inductive hypothesis applied to (\ref{mapintojetspace1}) and Lemma \ref{lem:phidef}(\ref{phidef3}), $\phi_\gamma \in C^{r-1,1}([0,2^{N_{m+1}}])$ and satisfies (\ref{mapintojetspace1}). It is also clear from this definition, \eqref{eq:phitilder}, and the inductive hypothesis applied to (\ref{mapintojetspace5}) that
$$\left\|\phi_\gamma^{(r)}\right\|_\infty \overset{\eqref{eq:Fdef1}}{\leq} \left\|\tilde{\phi}^{(r)}\right\|_\infty + \max_{1 \leq i \leq A}\left\|\phi_{\gamma_i}^{(r)}\right\|_\infty \overset{\eqref{eq:phitilder}}{\leq} \frac{\left\|\phi^{(r)}\right\|_\infty}{\sqrt{m+1}\ln(m+2)} + \max_{1 \leq i \leq A}\left\|\phi_{\gamma_i}^{(r)}\right\|_\infty$$
$$\overset{\text{ind hyp }(\ref{mapintojetspace5})}{\leq} \frac{\left\|\phi^{(r)}\right\|_\infty}{\sqrt{m+1}\ln(m+2)} + 2\sqrt{m}\left\|\phi^{(r)}\right\|_\infty \leq 2\sqrt{m+1}\left\|\phi^{(r)}\right\|_\infty$$
verifying (\ref{mapintojetspace5}). We can finally define $F_{m+1}$ on $+\Gamma_{m+1}'$ by declaring it to be the jet of $\phi_\gamma$ on $\gamma$. We need to check that $F_{m+1}$ is well-defined. Since every point of $+\Gamma_{m+1}$ is contained in some directed path from $0_m$ to $1_m$, we only need to check what happens when one point belongs to two different paths. Let $q \in +\Gamma_{m+1}'$ and suppose $q \in \gamma \cap \gamma'$ for some directed paths $\gamma,\gamma'$ from $0_{m+1}$ to $1_{m+1}$ in $+\Gamma_{m+1}'$. Set $t := d(q,0_{m+1})$. There are two cases: $t \in [0,a] \cup [2^{N_{m+1}}-a,2^{N_{m+1}}]$ or $t \in [a+(i-1)2^{N_m},a+i2^{N_m}]$ for some $i$. Assume the first case holds. Then our definition of $F_{m+1}(q)$ based on either $q \in \gamma$ or $q \in \gamma'$ is
$$F_{m+1}(q) = [j^{r-1}(t)](\tilde{\phi})$$
so well-definedness holds in this case. In the other case, our definition of $F_{m+1}(q)$ based on $q \in \gamma$ is, by the inductive hypothesis applied to (\ref{mapintojetspace2}),
$$F_{m+1}(q) = [j^{r-1}(t)](\tilde{\phi}) + ([j^{r-1}(t-a-(i-1)2^{N_m})](\phi_{\gamma_i}) + (a+(i-1)2^{N_m}-t,0)$$
$$\overset{\text{ind hyp}}{=} [j^{r-1}(t)](\tilde{\phi}) + F_m(q) + (a+(i-1)2^{N_m}-t,0)$$
and likewise based on $q \in \gamma'$,
$$F_{m+1}(q) = [j^{r-1}(t)](\tilde{\phi}) + ([j^{r-1}(t-a-(i-1)2^{N_m})](\phi_{\gamma'_i}) + (a+(i-1)2^{N_m}-t,0)$$
$$\overset{\text{ind hyp}}{=} [j^{r-1}(t)](\tilde{\phi}) + F_m(q) + (a+(i-1)2^{N_m}-t,0)$$ 
(note that the term $(a+(i-1)2^{N_m}-t,0)$ is present so that the $x$-coordinate of the entire expression will be $t$, and that we identify $q$ as belonging to a copy of $\Gamma_m$ so that $F_m(q)$ makes sense) so well-definedness holds in this case as well. Thus $F_{m+1}$ is well-defined on $+\Gamma_{m+1}'$. We define $F_{m+1}$ on $-\Gamma_{m+1}$ by $F_{m+1}(q) = -F_{m+1}(\iota(q))$, where $\iota: +\Gamma_{m+1}' \to -\Gamma_{m+1}'$ is the involution. It follows from this that if $\gamma$ is a directed $0_{m+1}$-$1_{m+1}$ path in $-\Gamma_{m+1}'$, then $\phi_\gamma = -\phi_{\iota(\gamma)}$. Thus, (\ref{mapintojetspace1}) and (\ref{mapintojetspace2}) are satisfied. It remains to show (\ref{mapintojetspace3}), (\ref{mapintojetspace4}), and (\ref{mapintojetspace6}). Before doing so, let us summarize the discussion on $F_{m+1}$ of this paragraph: for $q \in \Gamma_{m+1}$ and $t = d_{m+1}(q,0_{m+1})$,
\begin{equation} \label{eq:Fdef2}
\begin{multlined}
F_{m+1}(q) = \left\{\begin{matrix}
[j^{r-1}(t)](\tilde{\phi}) & t \in [0,a] \cup [2^{N_{m+1}}-a,2^{N_{m+1}}] \\ & q \in +\Gamma_{m+1}' \\ \\
[j^{r-1}(t)](\tilde{\phi}) \\ + F_m(q) + (a+(i-1)2^{N_m}-t,0) & t \in [a+(i-1)2^{N_m},a+i2^{N_m}] \\ &  q \in +\Gamma_{m+1}' \\ \\
[j^{r-1}(t)](-\tilde{\phi}) & t \in [0,a] \cup [2^{N_{m+1}}-a,2^{N_{m+1}}] \\ & q \in -\Gamma_{m+1}' \\ \\
[j^{r-1}(t)](-\tilde{\phi}) \\ - F_m(q) - (a+(i-1)2^{N_m}-t,0) & t \in [a+(i-1)2^{N_m},a+i2^{N_m}] \\ &  q \in -\Gamma_{m+1}' \end{matrix}\right.
\end{multlined}
\end{equation}

\begin{figure}
\includegraphics[scale=.64]{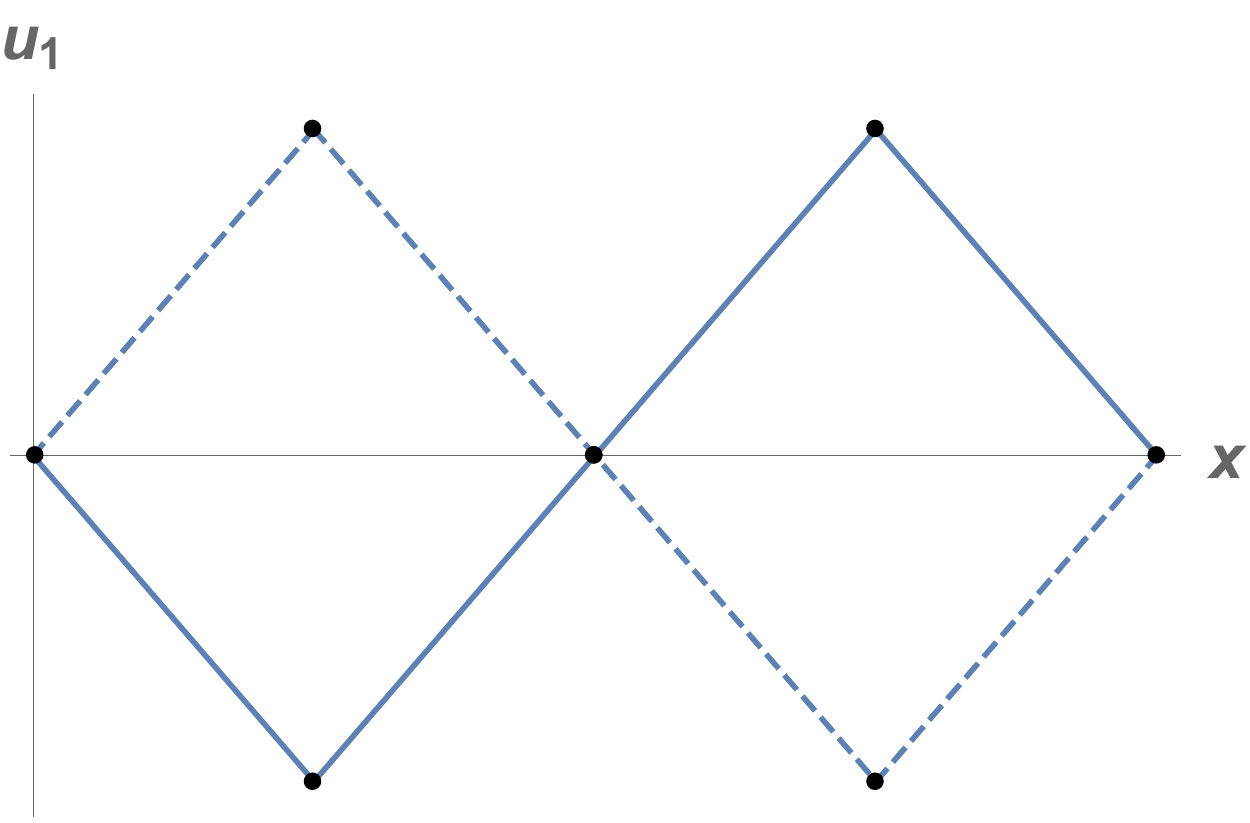}
\includegraphics[scale=.64]{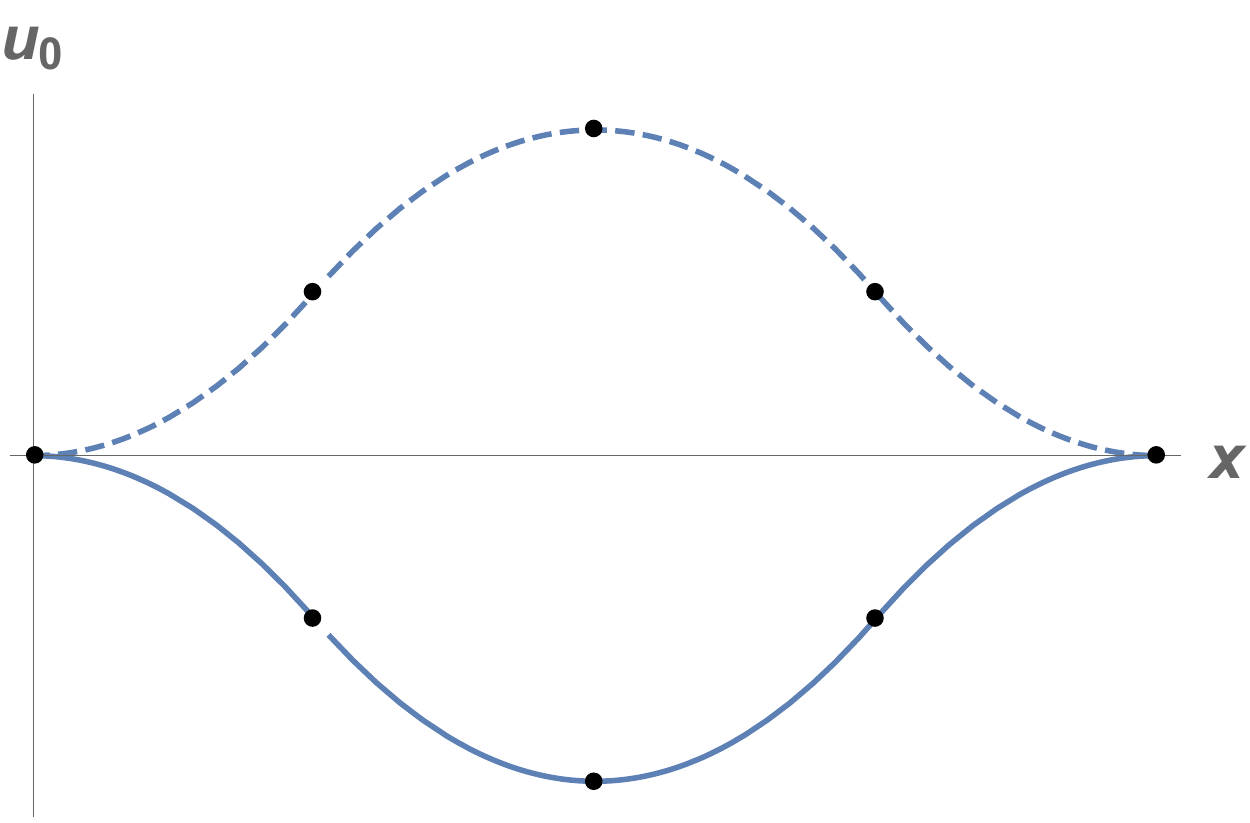}
\includegraphics[scale=.64]{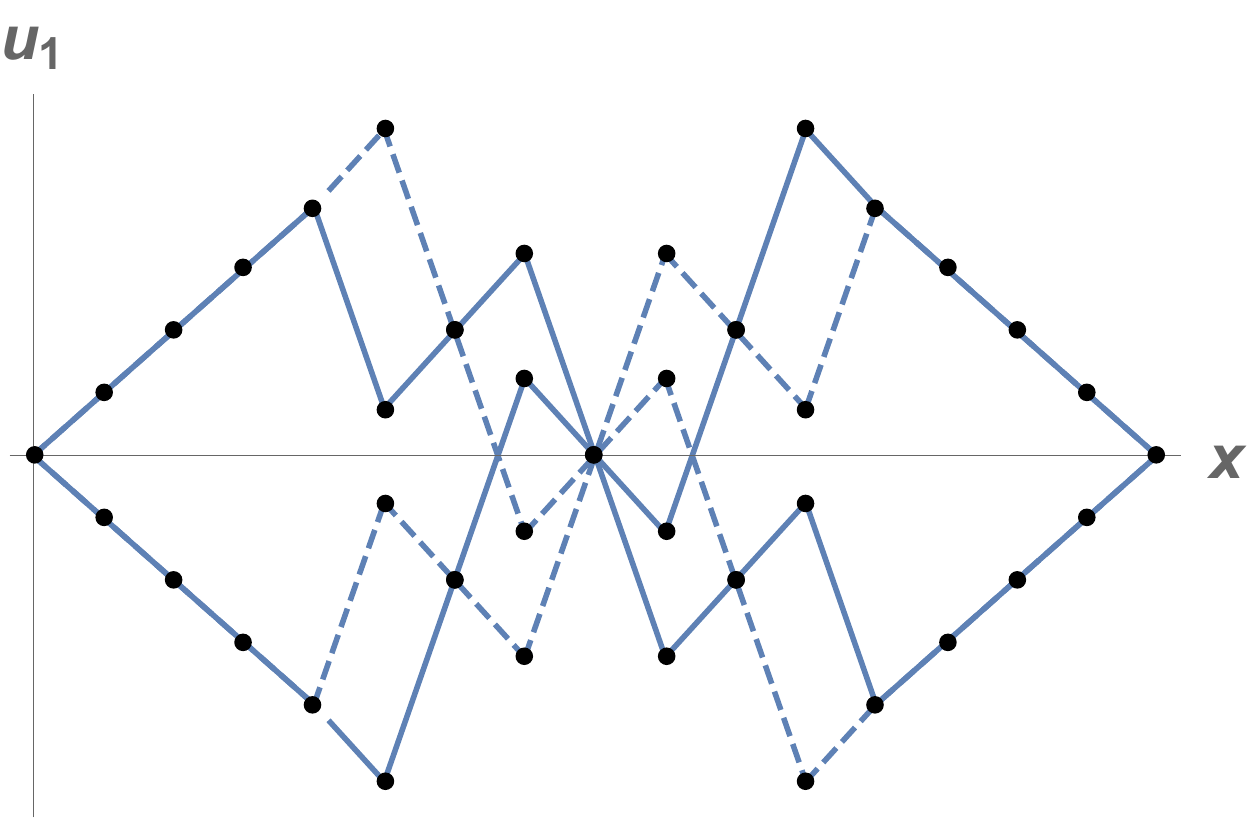}
\includegraphics[scale=.64]{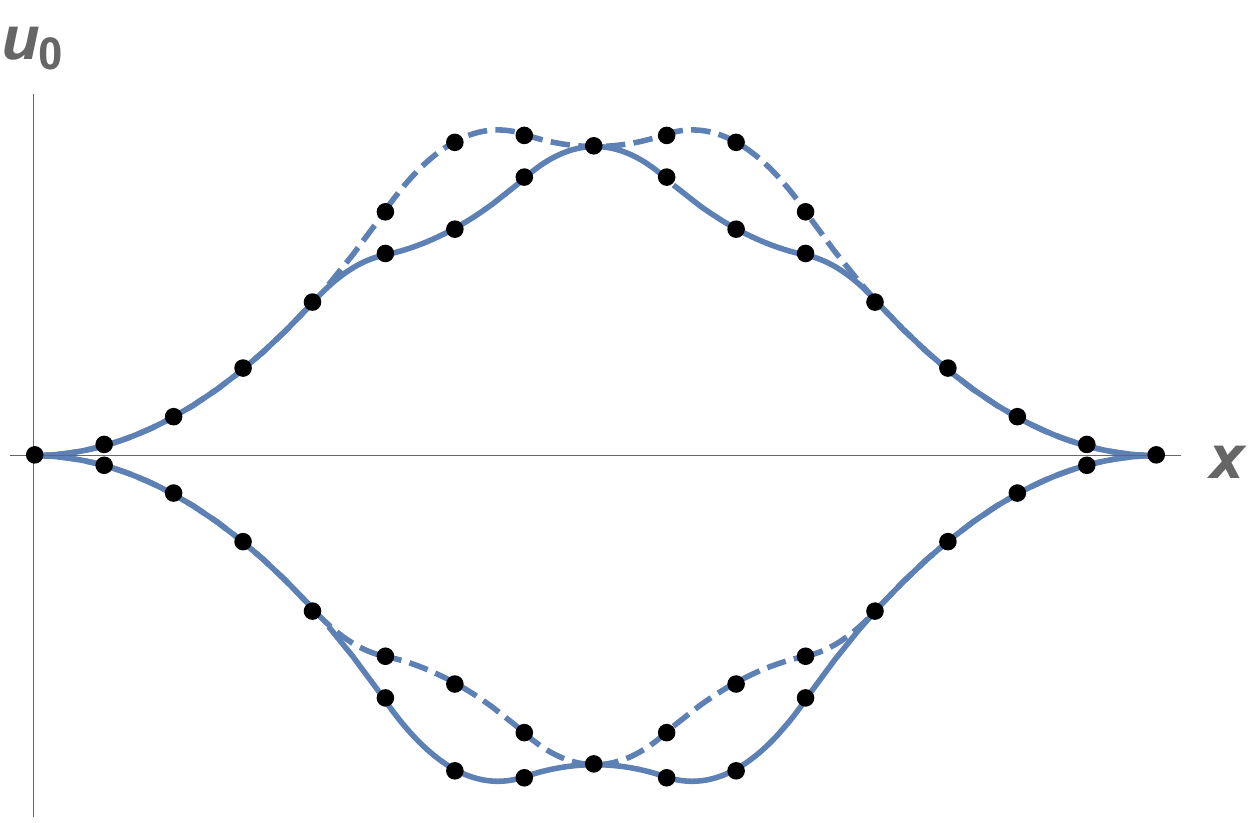}
\caption{Above, the image of $\Gamma_1$, and below, the image of $\Gamma_2$, based on $N_0 = 0$, $N_1 = 2$, $N_2 = 4$, in $J^1(\R)$ under the map $F_2$. $J^1(\R)$ is identified with $\R^3$ via the coordinates $x,u_1,u_0$. These are not drawn to the same scale. The two images on the right are respectively graph isomorphic to $\Gamma_1$ and $\Gamma_2$.}
\label{fig:Fm}
\end{figure}

See Figure \ref{fig:Fm} for the images of $\Gamma_1$ and $\Gamma_2$, based on $N_0 = 0$, $N_1 = 2$, $N_2 = 4$, in $J^1(\R)$. Using \eqref{eq:Fdef2}, we can quickly verify (\ref{mapintojetspace6}):
$$\|\pi_0 \comp F_{m+1}\|_\infty \overset{\eqref{eq:Fdef2}}{\leq} \left\|\tilde{\phi}\right\|_\infty + \|\pi_0 \comp F_{m}\|_\infty \overset{\text{ind hyp }(\ref{mapintojetspace6})}{\leq} \left\|\tilde{\phi}\right\|_\infty + 2^{r}(m+1)^{Crm+1}\|\phi\|_\infty$$
$$\overset{\eqref{eq:phitilde0}}{\leq} 2^{r}(m+2)^{Cr(m+1)}\|\phi\|_\infty + 2^{r}(m+1)^{Crm+1}\|\phi\|_\infty \leq 2^{r}(m+2)^{Cr(m+1)+1}\|\phi\|_\infty$$
(\ref{mapintojetspace3}) and (\ref{mapintojetspace4}) require more involved arguments.

\underline{Proof of (\ref{mapintojetspace3})}. Let $(q_1,q_2) \in \Gamma_{m+1} \cross \Gamma_{m+1}$ be a vertical pair. By definition of vertical pair, $d_{m+1}(q_1,0_{m+1}) = d_{m+1}(q_2,0_{m+1})$. Let $t$ denote this common value. There are two cases, $q_1,q_2$ belong to the same copy of $\Gamma_{m+1}'$, or they belong to different copies. First assume they belong to the same copy. Without loss of generality say $+\Gamma_{m+1}'$. Then there are two subcases for $t$: $t \in [0,a] \cup [2^{N_{m+1}}-a,2^{N_{m+1}}]$ or $t \in [a+(i-1)2^{N_m},a+i2^{N_m}]$ for some $1 \leq i \leq A$. Assume the first subcase holds. Then by construction of $+\Gamma_{m+1}'$, $q_1$, $q_2$ belong to a copy of $I$, and thus the equality $d_{m+1}(q_1,0_{m+1}) = d_{m+1}(q_2,0_{m+1})$ implies $q_1 = q_2$, so (\ref{mapintojetspace3}) trivially holds. Assume the second subcase for $t$. Then
$$|\pi_0(F_{m+1}(q_1) - F_{m+1}(q_2))| \overset{\eqref{eq:Fdef2}}{=} |\pi_0(F_{m}(q_1) - F_{m}(q_2))|$$
and so (\ref{mapintojetspace3}) holds by the inductive hypothesis.

Now assume we are in the second case where $q_1,q_2$ belong to different copies of $\Gamma_{m+1}'$. Without loss of generality, assume $q_1 \in +\Gamma_{m+1}'$ and $q_2 \in -\Gamma_{m+1}'$. Observe that under this assumption, $d_{m+1}(q_1,q_2) = 2t$ if $t \leq 2^{N_{m+1}-1}$ and $d_{m+1}(q_1,q_2) = 2(2^{N_{m+1}}-t)$ if $t \geq 2^{N_{m+1}-1}$. Because of the symmetry of $\tilde{\phi}$ about the line $x = 2^{N_{m+1}-1}$, it suffices to only check the case $t \leq 2^{N_{m+1}-1}$. Let us first record the following inequality:
\begin{equation} \label{eq:phitilde0geq}
\pi_{0}([j^{r-1}(t)](\tilde{\phi})) \geq  \frac{(2t)^r}{\sqrt{m+1}\ln(m+2)}
\end{equation}
which can be proven by
$$\pi_{0}([j^{r-1}(t)](\tilde{\phi})) = \tilde{\phi}(t) = \frac{2^{rN_{m+1}}}{\sqrt{m+1}\ln(m+2)}\phi(2^{-N_{m+1}}t) \overset{\text{Lem }\ref{lem:phidef}(\ref{phidef2})}{\geq} \frac{(2t)^r}{\sqrt{m+1}\ln(m+2)}$$
Again split into two subcases: $t \in [0,a]$ or $t \in [a,2^{N_{m+1}-1}]$. In the first subcase we have
$$\pi_{0}(F_{m+1}(q_1)) \overset{\eqref{eq:Fdef2}}{=} \pi_{0}([j^{r-1}(t)](\tilde{\phi})) \overset{\eqref{eq:phitilde0geq}}{\geq} \frac{(2t)^r}{\sqrt{m+1}\ln(m+2)}$$
and
$$\pi_{0}(F_{m+1}(q_2)) \overset{\eqref{eq:Fdef2}}{=} \pi_{0}([j^{r-1}(t)](-\tilde{\phi})) \overset{\eqref{eq:phitilde0geq}}{\leq} -\frac{(2t)^r}{\sqrt{m+1}\ln(m+2)}$$
and thus
$$|\pi_{0}(F_{m+1}(q_1) - F_{m+1}(q_2))| \geq \frac{2(2t)^r}{\sqrt{m+1}\ln(m+2)} = \frac{2d_{m+1}(q_1,q_2)^r}{\sqrt{m+1}\ln(m+2)}$$
proving (\ref{mapintojetspace3}) in this subcase.

Now assume the second subcase, $t \in [a,2^{N_{m+1}-1}]$. Then
$$\pi_{0}(F_{m+1}(q_1)) \overset{\eqref{eq:Fdef2}}{=} \pi_{0}([j^{r-1}(t)](\tilde{\phi}) + F_m(q_1) + (a+(i-1)2^{N_m}-t,0)) = \pi_{0}([j^{r-1}(t)](\tilde{\phi})) + \pi_{0}(F_m(q_1))$$
$$\overset{\eqref{eq:Kdef}}{\geq} \pi_{0}([j^{r-1}(t)](\tilde{\phi})) - K \overset{\eqref{eq:Nmdef}}{\geq} \pi_{0}([j^{r-1}(t)](\tilde{\phi})) - \frac{2^{r(N_{m+1}- \lceil 2\log_2(m+1) \rceil)-1}}{\sqrt{m+1}\ln(m+2)}$$
$$\overset{\eqref{eq:phitilde0geq}}{\geq} \frac{(2t)^r - 2^{r(N_{m+1}- \lceil 2\log_2(m+1) \rceil)-1}}{\sqrt{m+1}\ln(m+2)} = \frac{(2t)^r - (2a)^r/2}{\sqrt{m+1}\ln(m+2)} \geq \frac{(2t)^r - (2t)^r/2}{\sqrt{m+1}\ln(m+2)} = \frac{(2t)^r}{2\sqrt{m+1}\ln(m+2)}$$
Similarly,
$$\pi_{0}(F_{m+1}(q_2)) \leq -\frac{(2t)^r}{2\sqrt{m+1}\ln(m+2)}$$
and thus
$$|\pi_{0}(F_{m+1}(q_1) - F_{m+1}(q_2))| \geq \frac{(2t)^r}{\sqrt{m+1}\ln(m+2)} = \frac{d_{m+1}(q_1,q_2)^r}{\sqrt{m+1}\ln(m+2)}$$
proving (\ref{mapintojetspace3}) in this final subcase.

\underline{Proof of (\ref{mapintojetspace4})}. Let $0 \leq t < 2^{N_{m+1}}$ be an arbitrary integer. Again we consider two cases for $t$: $t \in [0,a) \cup [2^{N_{m+1}}-a,2^{N_{m+1}})$ or $t \in [a,2^{N_{m+1}}-a)$. Assume the first case holds. There are two subcases to consider for $\gamma(X^{m+1})$: $\gamma(X^{m+1})$ belongs to $+\Gamma_{m+1}'$ or $\gamma(X^{m+1})$ belongs to $-\Gamma_{m+1}'$. These are complementary events each occuring with probability 1/2. Restricted to the first event, for every $x \in [t,t+1]$,
$$\phi^{(r)}_{\gamma(X^{m+1})}(x) \overset{\eqref{eq:Fdef1}}{=} \tilde{\phi}^{(r)}(x) + f_{\gamma(X^{m+1})}(x) = \tilde{\phi}^{(r)}(x) \overset{\eqref{eq:phitildeconst}}{=} \tilde{\phi}^{(r)}(t) $$
where the second equality holds by the definition of $f$ succeeding \eqref{eq:Fdef1}.
Thus,
$$\sup_{[t,t+1]} \phi^{(r)}_{\gamma(X^{m+1})} = \inf_{[t,t+1]} \phi^{(r)}_{\gamma(X^{m+1})} = \tilde{\phi}^{(r)}(t)$$
Likewise, for the second subcase where we restrict to the event that $\gamma(X^{m+1})$ belongs to $-\Gamma_{m+1}'$,
$$\sup_{[t,t+1]} \phi^{(r)}_{\gamma(X^{m+1})} = \inf_{[t,t+1]} \phi^{(r)}_{\gamma(X^{m+1})} = -\tilde{\phi}^{(r)}(t)$$
Combining these yields
$$\E\left[\exp\left(y\left(\sup_{[t,t+1]} \phi^{(r)}_{\gamma(X^{m+1})}\right)\right)\right] = \frac{1}{2}\left(\exp\left(y\tilde{\phi}^{(r)}(t)\right) + \exp\left(-y\tilde{\phi}^{(r)}(t)\right)\right)$$
$$= \cosh\left(y\tilde{\phi}^{(r)}(t)\right) \leq \cosh\left(y\left\|\tilde{\phi}^{(r)}\right\|_\infty\right) \overset{\eqref{eq:phitilder}}{=} \cosh\left(y\left\|\phi^{(r)}\right\|_\infty\frac{1}{\sqrt{m+1}\ln(m+2)}\right)$$
$$\overset{\text{Lem }\ref{lem:coshexp}}{\leq} \exp\left(\frac{y^2}{2}\left\|\phi^{(r)}\right\|_\infty^2\frac{1}{(m+1)\ln(m+2)^2}\right)$$
and the same estimate holds for the essential infimum, verifying (\ref{mapintojetspace4}) in this case.

Now consider the second case, $t \in [a+(i-1)2^{N_m},a+i2^{N_m}]$ for some $1 \leq i \leq A$. Again, there are two subcases to consider for $\gamma(X^{m+1})$: $\gamma(X^{m+1})$ belongs to $+\Gamma_{m+1}'$ or $\gamma(X^{m+1})$ belongs to $-\Gamma_{m+1}'$. Restricted to the first event, and for the range of $t$ under consideration, $X^{m+1}$ is equal in distribution to a copy of $X^m$ (after an appropriate shift in the time parameter), by definition of $+\Gamma_{m+1}'$. Thus, for every $x \in [t,t+1]$,
$$\phi^{(r)}_{\gamma(X^{m+1})}(x) \overset{\eqref{eq:Fdef1}}{=} \tilde{\phi}^{(r)}(x) + f_{\gamma(X^{m+1})}(x) = \tilde{\phi}^{(r)}(x) + \phi_{\gamma(X^m)}(x') \overset{\eqref{eq:phitildeconst}}{=} \tilde{\phi}^{(r)}(t) + \phi_{\gamma(X^m)}(x')$$
where $x' = x-a-(i-1)2^{N_m}$, and the second equality holds by the definition of $f$ succeeding \eqref{eq:Fdef1}.
Thus,
$$\sup_{[t,t+1]} \phi^{(r)}_{\gamma(X^{m+1})} = \tilde{\phi}^{(r)}(t) + \sup_{[t',t'+1]} \phi_{\gamma(X^m)}$$
$$\inf_{[t,t+1]} \phi^{(r)}_{\gamma(X^{m+1})} = \tilde{\phi}^{(r)}(t) + \inf_{[t',t'+1]} \phi_{\gamma(X^m)}$$
where $t' =  t-a-(i-1)2^{N_m}$.
Likewise, for the second subcase where we restrict to the event that $\gamma(X^{m+1})$ belongs to $-\Gamma_{m+1}'$,
$$\sup_{[t,t+1]} \phi^{(r)}_{\gamma(X^{m+1})} = -\tilde{\phi}^{(r)}(t) - \inf_{[t',t'+1]} \phi_{\gamma(X^m)}$$
$$\inf_{[t,t+1]} \phi^{(r)}_{\gamma(X^{m+1})} = -\tilde{\phi}^{(r)}(t) - \sup_{[t',t'+1]} \phi_{\gamma(X^m)}$$

Combining these and using the inductive hypothesis applied to (\ref{mapintojetspace4}) and some basic monotonicity and symmetry properties of $\cosh$ yields
$$\E\left[\exp\left(y\left(\sup_{[t,t+1]} \phi^{(r)}_{\gamma(X^{m+1})}\right)\right)\right]$$
$$= \frac{1}{2}\exp\left(y\tilde{\phi}^{(r)}(t)\right)\E\left[\exp\left(y\left(\sup_{[t',t'+1]} \phi^{(r)}_{\gamma(X^{m+1})}\right)\right)\right]$$
$$+ \frac{1}{2}\exp\left(-y\tilde{\phi}^{(r)}(t)\right)\E\left[\exp\left(-y\left(\inf_{[t',t'+1]} \phi^{(r)}_{\gamma(X^{m+1})}\right)\right)\right]$$
$$\overset{\text{ind hyp}}{\leq} \frac{1}{2}\exp\left(y\tilde{\phi}^{(r)}(t)\right)\exp\left(\frac{y^2}{2}\left\|\phi^{(r)}\right\|_\infty^2\sum_{n=1}^{m} \frac{1}{n\ln(n+1)^2}\right)$$
$$+ \frac{1}{2}\exp\left(-y\tilde{\phi}^{(r)}(t)\right)\exp\left(\frac{(-y)^2}{2}\left\|\phi^{(r)}\right\|_\infty^2\sum_{n=1}^{m} \frac{1}{n\ln(n+1)^2}\right)$$
$$= \cosh\left(y\tilde{\phi}^{(r)}(t)\right)\exp\left(\frac{y^2}{2}\left\|\phi^{(r)}\right\|_\infty^2\sum_{n=1}^{m} \frac{1}{n\ln(n+1)^2}\right)$$
$$ \leq \cosh\left(y\left\|\tilde{\phi}^{(r)}\right\|_\infty\right)\exp\left(\frac{y^2}{2}\left\|\phi^{(r)}\right\|_\infty^2\sum_{n=1}^{m} \frac{1}{n\ln(n+1)^2}\right)$$
$$\overset{\eqref{eq:phitilder}}{=} \cosh\left(y\left\|\phi^{(r)}\right\|_\infty\frac{1}{\sqrt{m+1}\ln(m+2)}\right)\exp\left(\frac{y^2}{2}\left\|\phi^{(r)}\right\|_\infty^2\sum_{n=1}^{m} \frac{1}{n\ln(n+1)^2}\right)$$
$$\overset{\text{Lem }\ref{lem:coshexp}}{\leq} \exp\left(\frac{y^2}{2}\left\|\phi^{(r)}\right\|_\infty^2\frac{1}{(m+1)\ln(m+2)^2}\right)\exp\left(\frac{y^2}{2}\left\|\phi^{(r)}\right\|_\infty^2\sum_{n=1}^{m} \frac{1}{n\ln(n+1)^2}\right)$$
$$= \exp\left(\frac{y^2}{2}\left\|\phi^{(r)}\right\|_\infty^2\sum_{n=1}^{m+1} \frac{1}{n\ln(n+1)^2}\right)$$
and the same estimate holds for the infimum, verifying (\ref{mapintojetspace4}) in this case. This completes the inductive step and the proof of the lemma.
\end{proof}

\begin{theorem} \label{thm:lowerboundmain}
For every $p > 0$, $r \geq 1$, coarsely dense set $N \sbs J^{r-1}(\R)$, and $R \geq 3$, let $B_N(R) := \{x \in N: d_{CC}(0,x) \leq R\}$. Then
$$\Pi_p(B_N(R)) \gtrsim \frac{\ln(R)^{\frac{1}{p}-\frac{1}{2r}}}{\ln(\ln(R))^{\frac{1}{p}+\frac{1}{2r}}}$$
where the implicit constant can depend on $r,p$ but not on $N,R$.
\end{theorem}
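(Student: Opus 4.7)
The plan is to apply the Markov $p$-convexity inequality to the standard random walk $(X^m_t)$ on $\Gamma_m$ composed with the map $F_m: \Gamma_m \to J^{r-1}(\R)$ from Lemma \ref{lem:mapintojetspace}, with $m$ to be chosen optimally in terms of $R$, and then to transfer from $J^{r-1}(\R)$ down to $B_N(R)$ by a dilation and a nearest-neighbor projection. Lemma \ref{lem:badconvexity2} already supplies the dispersion estimate on the graph side, and the properties listed in Lemma \ref{lem:mapintojetspace} are engineered precisely so that $F_m$ transports this dispersion to $J^{r-1}(\R)$ while keeping both sides of the Markov inequality under quantitative control.

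For the LHS of the Markov $p$-convexity inequality, $X^m_t$ and $\tilde X^m_t(t-2^k)$ both have graph-distance $t$ from $0_m$ with full probability, so they form a vertical pair. Lemma \ref{lem:mapintojetspace}(2) makes their $x$-coordinates equal, Lemma \ref{lem:mapintojetspace}(3) lower-bounds the difference of their $u_0$-coordinates by $d_m(\cdot,\cdot)^r/(\sqrt m \ln(m+1))$, and Lemma \ref{lem:dcclowerbound} converts this into
$$d_{CC}(F_m(X^m_t), F_m(\tilde X^m_t(t-2^k))) \gtrsim \frac{d_m(X^m_t, \tilde X^m_t(t-2^k))}{(\sqrt m \ln(m+1))^{1/r}}.$$
Raising to the $p$-th power, dividing by $2^{kp}$, summing, and invoking Lemma \ref{lem:badconvexity2} gives a convexity sum of order at least $m\,2^{N_m}/(\sqrt m \ln(m+1))^{p/r}$.

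For the RHS (Lipschitz side), consecutive vertices $X^m_t,X^m_{t+1}$ correspond under the isometric identification of $\gamma(X^m)$ with $[0,2^{N_m}]$ to the jets of $\phi_{\gamma(X^m)}$ at consecutive integers, so Lemma \ref{lem:dccupperbound} bounds $d_{CC}(F_m(X^m_{t+1}),F_m(X^m_t))$ by $1+\|\phi^{(r)}_{\gamma(X^m)}\|_{L^\infty[t,t+1]}$. The subgaussian estimate of Lemma \ref{lem:mapintojetspace}(4) combined with Lemma \ref{lem:subgaussian} bounds the $p$-th moment of this quantity by a constant depending only on $p$, so summing over $t$ yields at most a constant multiple of $2^{N_m}$. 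Taking the ratio produces $\Pi_p(J^{r-1}(\R))^p \gtrsim m^{1-p/(2r)}/\ln(m+1)^{p/r}$, i.e.\ $\Pi_p \gtrsim m^{1/p-1/(2r)}/\ln(m+1)^{1/r}$.

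It remains to localize this to $B_N(R)$. Using Lemma \ref{lem:dccupperbound} along a path from $0_m$ together with Lemma \ref{lem:mapintojetspace}(5), the image $F_m(\Gamma_m)$ sits in a ball of CC-radius $\lesssim \sqrt m\cdot 2^{N_m}$. Rescale by a dilation $\delta_\lambda$ with $\lambda\asymp R/(\sqrt m\,2^{N_m})$ to place the image in $B(0,R/2)$; Markov $p$-convexity is dilation-invariant, so the estimate above is preserved. Post-composing with a nearest-neighbor projection $\pi_N: J^{r-1}(\R)\to N$ sends the map into $B_N(R)$ and, by the triangle inequality, alters each pairwise distance by at most $2D$ (the coarseness constant of $N$), which is negligible once $\lambda$ is large relative to $D$. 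Finally choose $m$ to be the largest integer with $\sqrt m\,2^{N_m}\lesssim R$, so that $m\asymp \log R/\log\log R$, and substitute into $m^{1/p-1/(2r)}/\ln(m+1)^{1/r}$ to recover exactly $\ln(R)^{1/p-1/(2r)}/\ln\ln(R)^{1/p+1/(2r)}$. The main technical obstacle is the bookkeeping in this last step: one must verify that for this $\lambda$ the nearest-neighbor errors are dominated by the typical scale of contributions to the convexity sum, so the lower bound survives projection onto the coarse net $N$.
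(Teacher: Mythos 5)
Your outline is essentially the paper's: apply the Markov $p$-convexity functional to the standard walk on $\Gamma_m$ composed with $F_m$, use Lemmas \ref{lem:mapintojetspace}(\ref{mapintojetspace3}) and \ref{lem:dcclowerbound} together with Lemma \ref{lem:badconvexity2} to lower-bound the dispersion side, use Lemmas \ref{lem:mapintojetspace}(\ref{mapintojetspace2}), \ref{lem:dccupperbound}, \ref{lem:mapintojetspace}(\ref{mapintojetspace4}), and \ref{lem:subgaussian} to upper-bound the increment side, and then localize to $B_N(R)$. Up through the intermediate conclusion $\Pi_p(J^{r-1}(\R)) \gtrsim m^{1/p-1/(2r)}/\ln(m+1)^{1/r}$ this is correct and matches the paper.

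There is, however, a genuine gap in the localization step, precisely the bookkeeping you flag as the ``main technical obstacle'' but do not resolve. You propose shrinking by $\delta_\lambda$ with $\lambda\asymp R/(\sqrt m\,2^{N_m})$ and then applying a nearest-neighbor map $\pi_N$, arguing that the additive error $2D$ from $\pi_N$ is negligible ``once $\lambda$ is large relative to $D$.'' But with your choice ``$m$ largest such that $\sqrt m\,2^{N_m}\lesssim R$'' one has $\lambda\asymp 1$, and more to the point, after applying $\delta_\lambda$ the smallest vertical-pair distances in the image are of order $\lambda\cdot 2/(\sqrt m\,\ln(m+1))^{1/r}$, which tends to $0$ as $m\to\infty$ no matter what fixed constant $\lambda$ you take. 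Those small scales (small $k$) account for a nonnegligible part of the convexity sum, and $\pi_N$ can collapse them entirely, so the claim that the error is ``negligible'' is unjustified and in fact false as stated. What is needed is a dilation factor that \emph{grows} with $m$. The paper uses $\delta_{2m}$: by Lemma \ref{lem:mapintojetspace}(\ref{mapintojetspace3}) and Lemma \ref{lem:dcclowerbound} this pushes every vertical-pair distance up to at least $4$ (for $m$ large), so $\pi_N$ only perturbs distances by a bounded multiplicative factor; the resulting extra factor of order $m^p$ on the dispersion side is matched by the same factor on the increment side (since $d_{CC}(\bar F_m(X^m_{t+1}),\bar F_m(X^m_t))\leq 2m\,d_{CC}(F_m(X^m_{t+1}),F_m(X^m_t))+2$), and these cancel in $\Pi_p$. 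With this enlarging dilation one must then re-solve for the largest $m$ with $\delta_{2m}(F_m(\Gamma_m))$ landing in a ball of radius $\lesssim R$; this still yields $m(R)\gtrsim \ln R/\ln\ln R$, and the final substitution recovers the claimed bound. You should replace the shrinking dilation with this enlarging one and carry the $m$-factors through both sides of the Markov inequality.
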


\begin{proof}
Let $p,r,N$ be as above. Since the Markov convexity constant $\Pi_p$ is scale-invariant, then by applying a dilation we may assume without loss of generality that every point of $J^{r-1}(\R)$ is at a distance of at most 1 away from a point of $N$. Let $F_m: \Gamma_m \to J^{r-1}(\R)$ be the sequence of maps from Lemma \ref{lem:mapintojetspace}. Extend the domain of $t$ for the random walks on $\Gamma_m$ by $X^m_t := X^m_0$ if $t \leq 0$, and $X^m_t := X^m_{2^{N_m}}$ if $t \geq 2^{N_m}$. Each $\{X^m_t\}_{t \in \Z}$ is a Markov process on the state space $\Gamma_m$. 

With full probability, $d_{CC}(X^m_t,0_m) = \min(\max(0,t),2^{N_m})$. Since $\tilde{X}^m_t(t-2^k)$ equals $X^m_t$ in distribution, $(X^m_t,\tilde{X}^m_t(t-2^k))$ is a vertical pair with full probability. Then Lemma \ref{lem:mapintojetspace}(\ref{mapintojetspace3}) applies, and we get the following lower bound for the left hand side of the Markov convexity inequality in Definition \ref{def:Markovdef}:
$$\sum_{k=0}^{\infty} \sum_{t \in \Z} \frac{\E[d_{CC}(F_m(X^m_t),F_m(\tilde{X}^m_t(t-2^k)))^p]}{2^{kp}} \overset{\text{Lem }\ref{lem:dcclowerbound}}{\geq} \sum_{k=0}^{\infty} \sum_{t \in \Z} \frac{\E[|\pi_0(f_m(X^m_t)-f_m(\tilde{X}^m_t(t-2^k)))|^{p/r}]}{2^{kp}}$$
$$\overset{\text{Lem }\ref{lem:mapintojetspace}(\ref{mapintojetspace3})}{\geq} \frac{m^{-\frac{p}{2r}}}{\ln(m+1)^{\frac{p}{s}}}\sum_{k=0}^{\infty} \sum_{t \in \Z} \frac{\E[d_m(X^m_t,\tilde{X}^m_t(t-2^k))^p]}{2^{kp}} \overset{\text{Lem }\ref{lem:badconvexity2}}{\gtrsim} m^{-\frac{p}{2r}}\ln(m+1)^{-\frac{p}{r}}m2^{N_m} = \frac{m^{1-\frac{p}{2r}}2^{N_m}}{\ln(m+1)^{\frac{p}{r}}}$$
In summary,
\begin{equation} \label{eq:MarkovIneqLHS}
\sum_{k=0}^{\infty} \sum_{t \in \Z} \frac{\E[d_{CC}(F_m(X^m_t),F_m(\tilde{X}^m_t(t-2^k)))^p]}{2^{kp}} \gtrsim \frac{m^{1-\frac{p}{2r}}2^{N_m}}{\ln(m+1)^{\frac{p}{r}}}
\end{equation}

Now we upper bound the right hand side of the Markov convexity inequality. Since \\ $d_{CC}(F_m(X^m_{t+1}),F_m(X^m_t)) = 0$ whenever $t \leq 0$ or $t \geq 2^{N_m}$,
\begin{equation} \label{eq:MarkovIneqRHSfinitesum}
\sum_{t \in \Z} \E[d_{CC}(F_m(X^m_{t+1}),F_m(X^m_t))^p] = \sum_{t=0}^{2^{N_m}-1} \E[d_{CC}(F_m(X^m_{t+1}),F_m(X^m_t))^p] =: (*)
\end{equation}
Then
$$(*) \overset{\text{Lem }\ref{lem:mapintojetspace}(\ref{mapintojetspace2})}{=} \sum_{t=0}^{2^{N_m}-1} \E\left[d_{CC}([j^{r-1}(t+1)](\phi_{\gamma(X^m)})([j^{r-1}(t)](\phi_{\gamma(X^m)}))^p\right]$$
$$\overset{\text{Lem }\ref{lem:dccupperbound}}{\leq} \sum_{t=0}^{2^{N_m}-1} \E\left[\left(1+\left\|\phi^{(r)}_{\gamma(X^m)}\right\|_{L^\infty[t,t+1]}\right)^p\right]$$
$$\lesssim \sum_{t=0}^{2^{N_m}-1} 1 + \E\left[\left\|\phi^{(r)}_{\gamma(X^m)}\right\|_{L^\infty[t,t+1]}^p\right] \overset{\text{Lems }\ref{lem:subgaussian},\ref{lem:mapintojetspace}(\ref{mapintojetspace4})}{\lesssim} \sum_{t=0}^{2^{N_m}-1} 1 = 2^{N_m}$$
In summary,
\begin{equation} \label{eq:MarkovIneqRHS}
\sum_{t \in \Z} \E[d_{CC}(F_m(X^m_{t+1}),F_m(X^m_t))^p] \lesssim 2^{N_m}
\end{equation}

Let $\pi_N: J^{r-1}(\R) \to N$ be any map so that
\begin{equation} \label{eq:piNdef}
d_{CC}(x,\pi_N(x)) \leq 1
\end{equation}
which exists by our initial assumption. We'll use $\pi_N$ to transfer inequalities \eqref{eq:MarkovIneqLHS} and \eqref{eq:MarkovIneqRHS} to corresponding inequalities on $N$. Consider the maps $\bar{F}_m: \Gamma_m \to N$ defined by $\bar{F}_m := \pi_N \comp \delta_{2m} \comp F_m$. By Lemma \ref{lem:mapintojetspace}(\ref{mapintojetspace3}), 
$$d_{CC}(\delta_{2m}(F_m(q_1)),\delta_{2m}(F_m(q_2))) \geq 2d_m(q_1,q_2) \geq 4$$
for any vertical pair $(q_1,q_2) \in \Gamma_m \cross \Gamma_m$. Combining this with \eqref{eq:piNdef} yields
$$d_{CC}(\bar{F}_m(q_1),\bar{F}_m(q_2)) \overset{\eqref{eq:piNdef}}{\geq} d_{CC}(\delta_{2m}(F_m(q_1)),\delta_{2m}(F_m(q_2))) - 2$$
$$\geq \frac{1}{2}d_{CC}(\delta_{2m}(F_m(q_1)),\delta_{2m}(F_m(q_2))) = md_{CC}(F_m(q_1),F_m(q_2))$$
for any vertical pair $(q_1,q_2)$. Combining this with \eqref{eq:MarkovIneqLHS} yields
\begin{equation} \label{eq:MarkovIneqLHS2}
\sum_{k=0}^{\infty} \sum_{t \in \Z} \frac{\E[d_{CC}(\bar{F}_m(X^m_t),\bar{F}_m(\tilde{X}^m_t(t-2^k)))^p]}{2^{kp}} \gtrsim \frac{m^{p+1-\frac{p}{2r}}2^{N_m}}{\ln(m+1)^{\frac{p}{r}}}
\end{equation}
Next,
$$d_{CC}(\bar{F}_m(X^m_{t+1}),\bar{F}_m(X^m_t)) \overset{\eqref{eq:piNdef}}{\leq} d_{CC}(\delta_{2m}(F_m(X^m_{t+1})),\delta_{2m}(F_m(X^m_t))) + 2$$
$$= 2md_{CC}(F_m(X^m_{t+1}),F_m(X^m_t)) + 2$$
Combining this with \eqref{eq:MarkovIneqRHS} and \eqref{eq:MarkovIneqRHSfinitesum} yields
\begin{equation} \label{eq:MarkovIneqRHS2}
\sum_{t \in \Z} \E[d_{CC}(\bar{F}_m(X^m_{t+1}),\bar{F}_m(X^m_t))^p] \lesssim m^p2^{N_m}
\end{equation}
For each $R \geq 1$, let $m(R)$ denote the largest $m$ so that $\bar{F}_{m(R)}(\Gamma_{m(R)}) \sbs B_N(R)$. Then \eqref{eq:MarkovIneqLHS2} and \eqref{eq:MarkovIneqRHS2} imply
\begin{equation} \label{eq:MarkovConstantbnd}
\Pi_p(B_N(R)) \gtrsim \frac{m(R)^{\frac{1}{p}-\frac{1}{2r}}}{\ln(m(R)+1)^{\frac{1}{r}}}
\end{equation}

Now we wish to estimate the quantity $m(R)$. Let $m \geq 0$ be arbitrary. Since any two points of $\Gamma_{m}$ are connected by a geodesic that is a piecewise directed path, the Lipschitz constant of any map on $\Gamma_{m}$ is the maximum of the Lipschitz constants of the map restricted to directed paths. Thus, by Lemmas \ref{lem:mapintojetspace}(\ref{mapintojetspace2}), \ref{lem:mapintojetspace}(\ref{mapintojetspace5}), and \ref{lem:dccupperbound}, Lip$(F_m) \lesssim \sqrt{m}$. Since diam($\Gamma_m) = 2^{N_m} \leq 2^{Cm\log_2(m+1)+1}$ and $F_m(0_m) = 0$, this implies $F_m(\Gamma_m) \sbs B_{J^{r-1}(\R)}(R')$ with $R' \lesssim (m+1)^{Cm+\frac{1}{2}}$. Then $\delta_{2m}(F_m(\Gamma_m)) \sbs B_{J^{r-1}(\R)}(R'')$ with $R'' \lesssim (m+1)^{Cm+\frac{3}{2}}$. Then $\bar{F}_m(\Gamma_m) = \pi_N(\delta_{2m}(F_m(\Gamma_m))) \sbs B_{J^{r-1}(\R)}(R''+1)$. This implies, for any $R \geq 1$, $R \lesssim (m(R)+1)^{Cm(R)+\frac{3}{2}}$, where the implied constant is independent of $R$. This implies $m(R) \gtrsim \frac{\ln(R)}{\ln(\ln(R))}$ for $R \geq 3$. Plugging this into \eqref{eq:MarkovConstantbnd} yields
$$\Pi_p(B_N(R)) \gtrsim \frac{\ln(R)^{\frac{1}{p}-\frac{1}{2r}}}{\ln(\ln(R))^{\frac{1}{p}+\frac{1}{2r}}}$$
\end{proof}

\bibliographystyle{amsalpha}
\bibliography{markovconvexitypaperbib}

\end{document}